\newcommand\Fp{\mathbb{F}_p}
\newcommand\Z{\mathbb{Z}}
\newcommand\C{\mathbb{C}}
\newcommand\Q{\mathbb{Q}}
\newcommand\N{\mathbb{N}_0}
\newcommand{\PP}{\mathbb{P}}      
\DeclareMathOperator{\PGL}{PGL}
\DeclareMathOperator{\ord}{ord}
\DeclareMathOperator{\lcm}{lcm}
\newtheorem{theorem}{Theorem}[section]
\newtheorem{definition}[theorem]{Definition}
\newtheorem{lemma}[theorem]{Lemma}
\newtheorem{proposition}[theorem]{Proposition}
\newtheorem{proposition-definition}[theorem]{Proposition-Definition}
\newtheorem{corollary}[theorem]{Corollary}
\newtheorem{conjecture}[theorem]{Conjecture}
\newtheorem{problem}[theorem]{Problem}
\theoremstyle{definition}
\theoremstyle{remark}
\newtheorem*{remark}{Remark}
\title[Powers in orbits of rational functions]{Powers in orbits of rational functions: cases of an arithmetic dynamical Mordell-Lang conjecture}
\author{Jordan Cahn, Rafe Jones, Jacob Spear}
\thanks{MSC 37P05 (primary), 11G05, 37P15 (secondary) \\ The first and third authors' research was supported by Carleton college's HHMI grant for undergraduate science education and the Carleton college department of Mathematics and Statistics.}
\numberwithin{equation}{section}
\begin{document}

\begin{abstract} Let $K$ be a finitely generated field of characteristic zero. We study, for fixed $m \geq 2$, the rational functions $\phi$ defined over $K$ that have a $K$-orbit containing infinitely many distinct $m$th powers. For $m \geq 5$ we show the only such functions are those of the form $cx^j(\psi(x))^m$ with $\psi \in K(x)$, and for $m \leq 4$ we show the only additional cases are certain Latt\`es maps and four families of rational functions whose special properties appear not to have been studied before. 
With additional analysis, we show that the index set $\{n \geq 0 : \phi^{n}(a) \in \lambda(\mathbb{P}^1(K))\}$ is a union of finitely many arithmetic progressions, where $\phi^{n}$ denotes the $n$th iterate of $\phi$ and $\lambda \in K(x)$ is any map M\"obius-conjugate over $K$ to $x^m$. When the index set is infinite, we give bounds on the number and moduli of the arithmetic progressions involved. These results are similar in flavor to the dynamical Mordell-Lang conjecture, and motivate a new conjecture on the intersection of an orbit with the value set of a morphism. 
A key ingredient in our proofs is a study of the curves $y^m = \phi^{n}(x)$. We describe all $\phi$ for which these curves have an irreducible component of genus at most 1, and show that such $\phi$ must have two distinct iterates that are equal in $K(x)^*/K(x)^{*m}$. 
\end{abstract}

\maketitle

\section{Introduction}

Let $K$ be a field and $\phi \in K(x)$ a rational function with coefficients in $K$. We denote by $\phi^n$ the $n$th iterate of $\phi$, which we emphasize is distinct from the $n$th power of $\phi$. A fundamental object in dynamics is the (forward) orbit\footnote{We generally drop the word ``forward" in this article, but we wish to avoid confusion with the backwards orbit $O_\phi^-(a)$, which we use frequently (see p. \pageref{backdef} for a definition). We thus prefer the notation $O_\phi^+(a)$ for the forward orbit rather than the more standard $O_\phi(a)$.} 
$$
O_\phi^+(a) = \{\phi^n(a) : n \geq 0\} $$
of $a \in \mathbb{P}^1(K)$ under the map $\phi$; note that $\phi^0(x) = x$ by convention, and so $a \in O_\phi^+(a)$. 
An overarching goal is to classify the orbits of a given map $\phi$ in terms of salient features of $K$, such as a metric or arithmetic structure. A related goal, which has attracted a large body of work, is to understand the collection of maps that can possess an orbit with certain very special properties. For example, when $K = \C$, Ghioca, Tucker, and Zieve \cite{gtz1, gtz2} show that if $f \in \C[x]$ has degree at least two, then for each $g \in \C[x]$ with degree at least two and such that an orbit of $g$ has infinite intersection with an orbit of $f$, it follows that $f$ has a common iterate with $g$. Thus the existence of a special orbit of $f$ has global implications for $f$; in particular, it implies functional properties of the map $f$. Another example of such a result is due to Silverman \cite[Theorem A]{jhsintegers}. Recall that the degree of $\phi$ may be defined by writing $\phi(x) = A_1(x)/A_2(x)$ with $A_1, A_2 \in K[x]$ relatively prime polynomials, and taking the maximum of the degrees of $A_1$ and $A_2$. Silverman shows that if $\phi(x) \in \Q(x)$ has degree at least two, and there is an orbit of $\phi(x)$ containing infinitely many integers, then $\phi^2(x)$ is a polynomial (a more general result is given in \cite[Theorem B]{jhsintegers}). 
This theme is taken much further in the dynamical Mordell-Lang conjecture \cite[Conjecture 1.5.0.1]{dmlbook}, which posits that if $\Phi$ is an endomorphism of a quasiprojective variety $X$ \label{dml} defined over $\C$, $a$ is any point in $X(\C)$, and $V \subset X$ is any subvariety, then $\{n \geq 0: \Phi^n(a) \in V(\C)\}$ is a union of finitely many arithmetic progressions (note that singletons are considered arithmetic progressions, and thus any finite set is a union of arithmetic progressions). In particular, if $O_\phi^+(a) \cap V(\C)$ is infinite, then $V$ contains a positive-dimensional subvariety that is periodic under the action of $f$. Indeed, let $M > 0$ and $\ell \geq 0$ be such that $\Phi^{kM + \ell}(a) \in V(\C)$ for all $k \geq 0$; then the Zariski closure of $\{\Phi^{kM + \ell}(a) : k \geq 0\}$ is positive-dimensional and invariant under $\Phi^M$. For a summary of the extensive recent work surrounding this conjecture, see \cite{dmlbook}.

From this point forward, we let $K$ be a finitely generated field of characteristic zero, that is, an extension of $\Q$ generated by a finite set of (possibly transcendental) elements; all such fields can be embedded in the complex numbers, and throughout this article we consider $K$ as a subfield of $\C$. Fix an integer $m \geq 2$. Our goal is a study of the $\phi \in K(x)$ possessing a $K$-orbit containing infinitely many distinct $m$th powers in $K$. 
The existence of such an orbit implies infinitely many distinct $K$-rational solutions to the equation $\phi^n(x) = y^m$ for each $n \geq 1$, and hence by Faltings' Theorem the curve $C_n : \phi^n(x) = y^m$ must have an irreducible component of genus at most one, for all $n \geq 1$ (throughout, we take the curve given by rational functions $\frac{A_1(x)}{A_2(x)} = \frac{B_1(y)}{B_2(y)}$ to be that given by $A_1(x)B_2(y) - B_1(y)A_2(x) = 0$). It is easy to see that every irreducible component of $C_n$ has the same genus (Proposition \ref{irred}), and we denote this quantity by $g_n$. We are thus interested in the maps $\phi$ such that $g_n$ is at most one for all $n \geq 1$; our first two results (Theorems \ref{iterative relationship} and \ref{maingenus}) deal with the a priori more general situation where $g_n$ is bounded as $n$ grows. These results, and also Corollary \ref{powercor}, show that the existence of an \textit{arithmetically} special orbit of $\phi$ implies strong conclusions about the global structure of the function. 
\begin{theorem} \label{iterative relationship}
Fix $m \geq 2$ and let $\phi \in \C(x)$ have degree at least two. Then $g_n$ is bounded as $n \to \infty$ if and only if 
\begin{equation} \label{itreleqn}
\text{there exist integers $r > s \geq 0$ such that $\phi^r(x) = \phi^s(x)(\psi(x))^m$ for some $\psi \in \C(x)$.}
\end{equation}
 In that case,
 \begin{enumerate}
 \item[(a)] If $\phi \in K(x)$ for some subfield $K$ of $\C$, then \eqref{itreleqn} holds for some $\psi \in K(x)$.
 \item[(b)] We have $g_n \leq 1$ for all $n \geq 1$.
 \item[(c)] One may take $r \leq m$ if $m \geq 3$, and $r \leq 6$ if $m = 2$.
 \end{enumerate}
\end{theorem}

A full accounting of the possible values of $r$ and $s$ that occur when \eqref{itreleqn} holds may be found in Section \ref{pfmain}. 

As a primary part of our proof of Theorem \ref{iterative relationship}, we show the following theorem. Recall that the post-critical set \text{Postcrit$(\phi)$} \label{postcrit} of a rational function $\phi \in \C(x)$ is $\bigcup_{n\geq 1} \phi^n(C),$
where $C$ is the critical set for $\phi$, i.e., the set of points in $\PP^1(\C)$ at which $\phi$ is not locally one-to-one. A map $\phi \in \C(x)$ of degree at least two is a \textit{Latt\`es map} \label{lattesintro} if there is a linear map $L(t) = at + b$ acting on a complex torus $\C/\Lambda$ and a finite-to-one holomorphic map $\Theta : \C / \Lambda \to \mathbb{P}^1(\C)$ satisfying $\phi \circ \Theta = \Theta \circ L$. Denote by $e_\phi(z)$ the ramification index, or local degree, of $\phi$ at $z \in \PP^1(\C)$; when $z \neq \infty$ and $\phi(z) \neq \infty$, this coincides with the multiplicity of $z$ as a root of $\phi(x) - \phi(z)$ (see 
\cite[p. 12]{jhsdynam} for a full discussion). \label{ephidef} Usefully, the Latt\`es maps are precisely those rational functions $\phi \in \C(x)$ such that there exists a function $r : \PP^1(\C) \to \Z$ satisfying
\begin{equation}  \label{rz}
r(\phi(z)) = e_\phi(z) \cdot r(z) \; \; \text{for all $z \in \mathbb{P}^1(\mathbb{C})$} \qquad \text{and} \qquad \text{$r(z) = 1$ for $z \not\in \text{Postcrit$(\phi)$}$.}
\end{equation}
Such a function $r$ is unique; see Theorem \ref{milnor2} or \cite[Section 4]{milnor} for details. When there exists such a function $r$, the collection of values of $r$ on $\text{Postcrit$(\phi)$}$ is called the \textit{signature} of $\phi$, and the only possible signatures are (2,2,2,2), (3,3,3), (2,4,4), and (2,3,6) \cite[Corollary 4.5]{milnor}.

\begin{theorem} \label{maingenus} 
Fix $m \geq 2$ and let $\phi \in \C(x)$ have degree at least two. Then $g_n$ is bounded as $n \to \infty$ if and only if one of the following holds:
\begin{enumerate}
\item $\phi(x) = cx^j(\psi(x))^m$ with $\psi \in \C(x)$, $0 \leq j \leq m-1$, $c \in \C^*$;
\item $m = 4$ and $\phi$ is a Latt\`es map of signature $(2,4,4)$, with $\{0, \infty\}$ in the post-critical set and $r(0) = r(\infty) = 4$, where $r$ is the function satisfying \eqref{rz}; 
\item $m = 3$ and $\phi$ is a Latt\`es map of signature $(3,3,3)$, with $\{0, \infty\}$ in the post-critical set;
\item $m = 2$ and $\phi$ is a Latt\`es map of signature $(2,2,2,2)$ with $\{0, \infty\}$ in the post-critical set;
\item $m = 2$ and either $\phi(x)$ or $1/\phi(1/x)$ can be written in one of the following ways, where $B, C \in \C^*$, $f, g, h \in \C[x] \setminus \{0\}$, and the numerator and denominator of each fraction have no common roots in $\C$:
\begin{enumerate}
\item $-\frac{f(x)^2}{(x-C)g(x)^2}$ with $f(x)^2 + C(x-C)g(x)^2 = Cxh(x)^2$; 
\item $-\frac{(x-C)f(x)^2}{g(x)^2}$ with $(x-C)f(x)^2 + Cg(x)^2 = xh(x)^2$; 
\item ${B\frac{(x-C)f(x)^2}{g(x)^2}}$ with $B(x-C)f(x)^2 - Cg(x)^2 = -Ch(x)^2$; 
\item ${B\frac{x(x-C)f(x)^2}{g(x)^2}}$ with $Bx(x-C)f(x)^2 - Cg(x)^2 = -Ch(x)^2$;
\end{enumerate}
\end{enumerate}
Moreover, if $K$ is a subfield of $\C$ with $\phi \in K(x)$, then we may take 
\begin{align} 
\label{kpart1} & \text{$\psi \in K(x)$ and $c \in K^*$ in case (1)} \\  \label{kpart2} & \text{$B, C \in K^*$ and $f, g, h \in K[x] \setminus \{0\}$ in case (5).} 
\end{align}
\end{theorem}

The maps in part (5) of Theorem \ref{maingenus} appear not to have been studied before in general. We discuss how to give explicit parameterizations of all such maps in Proposition \ref{paramprop} and the paragraphs following. Important examples of these maps are closely related to the degree-$d$ monic Chebyshev polynomial $T_d$, defined by the equation $T_d(x + x^{-1}) = x^d + x^{-d}$; see \cite[Section 2]{milnor} or \cite[Section 6.2]{jhsdynam} for further properties. The map $(-1)^d(T_d(x + 2)) - 2$ satisfies (5b) when $d$ is odd and (5d) when $d$ is even (see Corollary \ref{mainpoly} for more on these maps). Note that maps of type (5a) and (5c) cannot be M\"obius-conjugate to polynomials (see the proof of Theorem \ref{maingenus} in Section \ref{firstthmsec}).

Combining Faltings' theorem with Theorems \ref{iterative relationship} and \ref{maingenus}, we obtain the main result of this paper. Denote by $\mathbb{P}^1(K)^m$ the set $\{k^m : k \in K\} \cup \{\infty\}$. 

\begin{corollary} \label{powercor}
Let $K$ be a finitely generated field of characteristic zero field, let $\phi \in K(x)$ have degree at least two, and fix $m \geq 2$. If there exists $a \in \mathbb{P}^1(K)$ such that $O_\phi^+(a) \cap \mathbb{P}^1(K)^m$ is infinite, then $\phi$ falls into one of the cases in Theorem \ref{maingenus} and satisfies \eqref{kpart1} and \eqref{kpart2}, and $\phi$ also satisfies \eqref{itreleqn} with $\psi \in K(x)$. 
\end{corollary}

Thus, the infinitude of $O_\phi^+(a) \cap \mathbb{P}^1(K)^m$ implies strong functional properties of $\phi$, similar to the results of \cite{gtz1, gtz2, jhsintegers} mentioned at the beginning of this section.

The proofs of Theorems \ref{iterative relationship} and \ref{maingenus} unfold in two steps. The first is geometric, and involves studying $\phi \in \C(x)$ for which $g_n$ is bounded. The second is arithmetic, and consists of showing that various quantities in the two theorems may be defined over a subfield $K$ of $\C$, when $\phi$ was initially defined over $K$. 
In the geometric part, our study of the genus of $C_n$ is a case of a problem with a long history, which remains largely unresolved: determine all pairs $A, B$ of complex rational functions such that the curve $A(x) = B(y)$ has an irreducible component of genus at most one. In the case where $A$ and $B$ are polynomials, a complete solution is given in \cite{bilutichy} for irreducible components of genus zero with at most two points at infinity (see \cite[pp. 264, 281]{bilutichy} for discussion and references regarding the extensive past work on this problem). Partial results exist for irreducible components of genus one, e.g. \cite{an, az1}, again assuming $A$ and $B$ are polynomials. When $A$ and $B$ are allowed to be non-polynomial rational functions, there are many fewer results available. One example is \cite{pak1}, which classifies all $A, B$ with no common critical values such that $A(x) = B(y)$ has an irreducible component of genus at most one. 

While the general problem is far from resolution, we propose the following variant, which our Theorem \ref{maingenus} resolves for $B(y) = y^m, m \geq 2$. 
\begin{problem} \label{itclass}
Given $B \in \C(x)$, explicitly determine all $\phi \in \C(x)$ such that 
\begin{equation} \label{smallcomp}
\text{for all $n \geq 1$ the curve $\phi^n(x) = B(y)$ has an irreducible component of genus at most one.}
\end{equation} 
\end{problem}
One may also fix $\phi$ and study rational functions $B$ for which \eqref{smallcomp} holds. This is the approach taken in the recent preprint \cite{pak5}, where it is shown, among other results, that if $\phi$ is not a power map, Chebyshev polynomial, or Latt\`es map, and \eqref{smallcomp} holds, then there is a rational Galois covering $h : \PP^1 \to \PP^1$ (depending only on $\phi$) and rational functions $V, V'$ satisfying $\phi \circ h = h \circ V'$ and $\phi^\ell \circ h = B \circ V$ for some $\ell \geq 1$.

The work of Ghioca, Tucker, and Zieve in \cite{gtz1, gtz2} addresses a question of similar flavor to Problem \ref{itclass}, though still quite distinct. There, the authors classify all pairs $f, g$ of complex polynomials such that for every $m,n \geq 1$ the curve $f^n(x) = g^m(y)$ has an irreducible component of genus zero with at most two points at infinity. To do so, they rely on the classification of Bilu and Tichy \cite{bilutichy} mentioned above; thus they already know precisely which curves $A(x) = B(y)$ have the desired property, but they must determine when $A$ and $B$ arise from iteration of lower-degree polynomials. This requires significant and novel results on polynomial decomposition. 

Taking $B(y) = y^m,$ as in Theorem \ref{maingenus}, greatly eases the generally difficult problem of determining the irreducible components of $A(x) = B(y)$, and leads to a considerably simpler genus formula than the one for general curves of the form $A(x) = B(y)$ (see Propositions \ref{irred} and \ref{genus}). Nonetheless, similar to the situation of \cite{gtz1, gtz2}, we are left with the a priori difficult problem of determining the maps $\phi$ such that $A$ can be taken to be an arbitrary iterate of $\phi$.

The arithmetic part of the proofs of Theorems \ref{iterative relationship} and \ref{maingenus} can be found mainly in Section \ref{fielddef}. This aspect of our results, in particular the part of Theorem \ref{iterative relationship} where $\psi$ may be defined over $K$ when $\phi$ is defined over $K$, leads to a result whose conclusion is the same as that of the dynamical Mordell-Lang conjecture. When the intersection set is infinite, we are able to prove the far stronger conclusion that three arithmetic progressions suffice, and we give information on their moduli. Throughout, we denote by $\N$ the set of nonnegative integers. 

\begin{theorem} \label{main ML}
Let $K$ be a finitely generated field of characteristic zero, let $\phi, \lambda \in K(x)$ each have degree at least two, and suppose that $\lambda$ is M\"obius-conjugate (over $K$) to a power map.   
Then for every $a \in \mathbb{P}^1(K)$, the set 
\begin{equation} \label{indexset}
\{n \in \N : \phi^n(a) \in \lambda(\mathbb{P}^1(K))\}
\end{equation}
is a finite union of arithmetic progressions. If $O_\phi^+(a) \cap \lambda(\mathbb{P}^1(K))$ is infinite, then the set \eqref{indexset} is a union of at most three arithmetic progressions, each with modulus $M$ satisfying $M \leq m$ if $m \geq 3$ and $M \leq 6$ if $m = 2$.
\end{theorem}

We emphasize again that we take singletons to be arithmetic progressions of modulus $0$, and so Theorem \ref{main ML} holds trivially when the set \eqref{indexset} is finite. If $O_\phi^+(a) \cap \lambda(\mathbb{P}^1(K))$ is finite, then either $O_\phi^+(a)$ is infinite and the set \eqref{indexset} is finite or $O_\phi^+(a)$ is finite; in either case Theorem \ref{main ML} holds trivially. In Section \ref{example} we give an example where $O_\phi^+(a) \cap \lambda(\mathbb{P}^1(K))$ is infinite and the set \eqref{indexset} cannot be written as a union of two arithmetic progressions, showing that three is best possible. The bound on $M$ in Theorem \ref{main ML} is best possible for $m \geq 3$, regardless of the choice of $K$ (see Lemma \ref{trivmain}); for $m = 2$ the bound can be reduced to $M \leq 4$ using an analysis of the field of definition of Latt\`es maps, which we plan to describe in a future article. 

The proof of Theorem \ref{main ML} quickly reduces to the case $\lambda(x) = x^m$. Indeed, let $\mu \in \PGL_2(K)$ and put $\phi^\mu = \mu^{-1} \circ \phi \circ \mu$. If $\phi^n(a) = \lambda(b)$ for $a, b \in \mathbb{P}^1(K)$, then $(\phi^{\mu})^n(\mu^{-1}(a)) = \lambda^{\mu}(\mu^{-1}(b))$, giving
\begin{equation} \label{Mobinv}
\{n \in \N: \phi^n(a) \in \lambda(\mathbb{P}^1(K))\} = \{n \in \N: (\phi^{\mu})^n(\mu^{-1}(a)) \in \lambda^\mu(\mathbb{P}^1(K))\}.
\end{equation}
 Hence if Theorem \ref{main ML} can be established for $\lambda^{\mu}$ and arbitrary $\phi$ and $a$, it must also hold for $\lambda$ and arbitrary $\phi$ and $a$. Thus if $\mu$ conjugates $\lambda$ to a power map, we have reduced to the case $\lambda(x) = x^m$, as desired. Note that if $\lambda$ is not conjugate over $K$ to a power map, we cannot take advantage of the special geometric properties of the curve $\phi^n(x) = y^m$ mentioned in the discussion following Problem \ref{itclass}, and thus new methods would be required.

The dynamical Mordell-Lang conjecture asserts that once there exist infinitely many instances of the intersection between the geometric object $V(\C)$ and the arithmetic dynamical object $O_\phi^+(a)$ (using the notation from the first paragraph of the introduction), then the intersection must have a structure: its index set must be given by finitely many arithmetic progressions. Theorem \ref{main ML} proves this assertion in the case where $X = \PP^1$ and the geometric object $V$ is replaced by an arithmetic object, namely the set of $K$-values of the morphism $\lambda: X \to X$. We conjecture that a similar conclusion holds for the set of $K$-values of more general morphisms:

\begin{conjecture}[Arithmetic dynamical Mordell-Lang conjecture for $\mathbb{P}^1$] \label{adml}
Let $X = \mathbb{P}^1$ and let $Y$ be a curve defined over a finitely generated field of characteristic zero $K$. Suppose that $\lambda : Y \to X$ is a finite $K$-morphism, and $\phi : X \to X$ is a morphism of degree at least two. Then for any $a \in X(K)$, the set $\{n \in \N : \phi^n(a) \in \lambda(Y(K))\}$ is a finite union of arithmetic progressions.
\end{conjecture}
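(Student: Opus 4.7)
The plan begins with the standard dichotomy on $a$: if $a$ is preperiodic under $\phi$, then $O_\phi(a)$ is finite and the conclusion holds trivially, so the substance is to show that for wandering $a$ the set $O_\phi(a) \cap \lambda(Y(K))$ is finite. To produce algebro-geometric data, for each $n \geq 0$ I would form the fiber-product curve
\[
Z_n = \{(y,x) \in Y \times X : \lambda(y) = \phi^n(x)\},
\]
equipped with its projection $\pi_n : Z_n \to X$ of degree $\deg \lambda$, whose image on $K$-points is exactly $\{x \in X(K) : \phi^n(x) \in \lambda(Y(K))\}$.

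A Riemann--Hurwitz analysis applied to the other projection $Z_n \to Y$, of degree $d^n$ for $d = \deg \phi$, shows that unless every branch point of $\lambda$ is preperiodic for $\phi$, the ramification contribution forces the genus $g(Z_n)$ to grow exponentially. When $g(Z_n) \geq 2$, Faltings' theorem gives $\lvert Z_n(K) \rvert < \infty$; to turn this into finiteness of $\{n : \phi^n(a) \in \lambda(Y(K))\}$ for wandering $a$, I would combine it with heights. Specifically, the canonical-height identity $h(\phi^n(a)) = d^n\,\hat h_\phi(a) + O(1)$ grows exponentially when $\hat h_\phi(a) > 0$, and any lift $y_n \in Y(K)$ with $\lambda(y_n) = \phi^n(a)$ must satisfy $h(y_n) \gg h(\phi^n(a))/\deg \lambda$. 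If $Y$ has geometric genus at least two, Faltings applied to $Y$ itself already makes $Y(K)$ finite and we are done; the delicate cases are genus zero and one, where one should track heights on $Y(K)$ through $\lambda$ and couple them with the Faltings bound on $Z_n(K)$ to extract the desired finiteness, perhaps aided by a dynamical form of Hilbert irreducibility to rule out systematic $K$-rational lifts.

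The main obstacle is the non-generic regime in which the branch locus of $\lambda$ is preperiodic under $\phi$ and the genera $g(Z_n)$ remain bounded, so Faltings gives no direct control. Here one must classify all such pairs $(\phi,\lambda)$ and verify the arithmetic-progression description by hand. Theorem \ref{main ML} accomplishes exactly this kind of classification under the restrictive hypothesis that $\lambda : \mathbb{P}^1 \to \mathbb{P}^1$ has two totally ramified $K$-fixed points, uncovering Latt\`es maps and several new families; for general finite $\lambda : Y \to \mathbb{P}^1$ one expects the admissible pairs to arise from algebraic symmetries of $\phi$ (endomorphisms of elliptic curves, monomial or Chebyshev structures) matched by correspondingly structured $\lambda$. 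Producing this classification in full generality, and handling each case to read off the periodic structure governing the arithmetic progressions, is where the bulk of the work should lie; a comparatively routine final step is to reduce from an arbitrary characteristic-zero $K$ to a number field by specializing at a finitely generated subring containing $a$ and the coefficients of $\phi$ and $\lambda$.
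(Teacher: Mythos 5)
This statement is a \emph{conjecture}, not a theorem, and the paper neither proves it nor claims to. What the paper supplies is the heuristic in the paragraph following Conjecture~\ref{adml} (form $Z_n$, observe that an infinite intersection forces $Z_n(K)$ infinite, appeal to Bombieri--Lang) together with a complete proof only of the special case, Theorem~\ref{main ML}, where $Y = \mathbb{P}^1$ and $\lambda$ is conjugate to a power map. Your sketch tracks that heuristic closely: you form the same fiber-product curves $Z_n$, appeal to Faltings when the genus is $\geq 2$, and correctly identify the bounded-genus regime as the locus of all the real work. Since neither you nor the paper proves the conjecture, the question is not whether your argument is complete (it is not, and you say so candidly), but whether the scaffolding is sound.

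Two places where it is not quite sound. First, your passage from ``$g(Z_n) \geq 2$ implies $Z_n(K)$ finite'' to the arithmetic-progression conclusion via canonical heights and a ``dynamical Hilbert irreducibility'' is an unnecessary detour, and the heights alone do not close the gap. The clean observation, which is what the paper uses, is this: if $\{n : \phi^n(a) \in \lambda(Y(K))\}$ is infinite and $a$ is not preperiodic, then for each fixed $n$ there are infinitely many $i > n$ with $\phi^n(\phi^{i-n}(a)) \in \lambda(Y(K))$, each furnishing a distinct $K$-point of $Z_n$; hence $Z_n(K)$ is infinite for \emph{every} $n$, and Faltings forces $g(Z_n) \leq 1$ for all $n$. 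No height estimate or irreducibility input is needed to reach that dichotomy. Second, the criterion you propose for bounded genus --- ``unless every branch point of $\lambda$ is preperiodic for $\phi$'' --- is not the right condition even in the paper's special case. The paper's Corollary~\ref{rhocor} shows that boundedness of $g(C_n)$ is equivalent to $0$ and $\infty$ (the branch points of $y \mapsto y^m$) being \emph{$m$-branch abundant}, which is a quantitative restriction on ramification indices along \emph{backward} orbits (Definition~\ref{rhodef}), not a statement about the forward orbit of the branch points. It happens that in all the non-trivial cases classified in Table~\ref{atype} the branch points end up preperiodic, but that is a \emph{consequence} of the classification, not the defining criterion, and for a general $\lambda : Y \to \mathbb{P}^1$ you would need the analogous ramification-theoretic characterization, not a preperiodicity hypothesis.

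Beyond those two points, the honest conclusion is that your proposal is a reasonable restatement of the paper's own outline but contains no new idea that would close the conjecture: the classification of pairs $(\phi, \lambda)$ with bounded $g(Z_n)$ for arbitrary finite $\lambda : Y \to \mathbb{P}^1$ --- the step you correctly flag as ``where the bulk of the work should lie'' --- is precisely what remains open. The paper's Theorem~\ref{maingenus} and Theorem~\ref{iterative relationship} carry this out only when $Z_n$ is superelliptic (so that Proposition~\ref{genus} gives an explicit formula for its genus in terms of root multiplicities), and the paper itself flags the lack of such a formula for general curves as ``a primary obstacle'' to extending Theorem~\ref{main ML}.
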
 

Shortly after this paper was posted to the ArXiv, Hyde and Zieve sent us a proof of Conjecture \ref{adml}. Their short argument makes use of the pigeonhole principle, as well as the finiteness of the number of topological covers of a compact Riemann surface with specified degree and branch points. It also yields a proof of Theorem \ref{maingenus} in the case where the curve $\phi^n(x) = y^m$ is irreducible for all $n \geq 1$.

It is interesting to consider whether a similar conclusion to that of Conjecture \ref{adml} holds for $X = \mathbb{P}^j$ with $j \geq 1$, where $Y$ is a projective variety and $\lambda$ is finite onto its image; indeed, one may extend the question further to the case where $X$ and $Y$ are any quasi-projective varieties, and $\phi$ is an endomorphism of $X$.
To see why such a generalization of Conjecture \ref{adml} is plausible, let $Z_n$ ($n \geq 1$) be the subvariety of $X \times Y$ where the morphisms $\phi^n : X \to X$ and $\lambda : Y \to X$ agree. Then there is a natural $K$-morphism $Z_{n+1} \to Z_{n}$ taking $(x, y)$ to $(\phi(x), y)$, which we again denote by $\phi$.  Thus for any $i > j$ there is a finite map $\phi^{i-j} : Z_i(K) \to Z_j(K)$. Suppose that $O_\phi^+(a) \cap \lambda(Y(K))$ is infinite; otherwise the conclusion of Conjecture \ref{adml} holds trivially, as in the paragraph following Theorem \ref{main ML}. Thus $O_\phi^+(a)$ must be infinite, and hence $\phi^i(a) \neq \phi^j(a)$ for $i \neq j$. We label the next observation for future reference: 
\begin{equation} \label{infrat}
\text{for any fixed $n \geq 1$, there are infinitely many $i > n$ with $\phi^{n}(\phi^{i - n}(a)) \in \lambda(Y(K))$,}
\end{equation}
implying there are infinitely many points in $Z_n(K)$ for all $n \geq 1$. If these points are Zariski-dense in $Z_n$, then the Bombieri-Lang conjecture \cite[Conjecture F.5.2.1]{jhsdioph} predicts that $Z_n$ is not a variety of general type. We speculate that under suitable hypotheses this implies a functional relationship among iterates of $\phi$ and $\lambda$, for instance $\phi^r = 
\lambda \circ g$ for some $r \geq 1$ and some $K$-morphism $g : X \to Y$. 

The previous paragraph furnishes an outline for our proof of Theorem \ref{main ML}. In the situation of that theorem, $Z_n$ is a curve, and thus any infinite subset is Zariski dense, and the Bombieri-Lang conjecture is the famous theorem of Faltings \cite[Corollary 2.2, p.12]{LangDG} (see \cite[Theorem E.0.1]{jhsdioph} for an exposition of the number field case). We are left with the problem of determining for which maps $\phi$ the curve $Z_n$ is not of general type, i.e, when $g_n \leq 1$ for all $n \geq 1$. Theorem \ref{iterative relationship} gives the desired functional relationship under the hypothesis that $\deg \phi \geq 2$, and a close analysis of the various cases encountered in the proof of Theorem \ref{iterative relationship} gives the bound of three arithmetic progressions found in Theorem \ref{main ML}, together with the information on $M$. 
\label{faltings}

We close this introduction with three additional results related to Corollary \ref{powercor}. Denote by $K^m$ the set $\{k^m : k \in K\}$. 

\begin{corollary} \label{powercor2}
Let $K$ be a finitely generated field of characteristic zero and let $\phi \in K(x)$ have degree $d \geq 2$. Suppose that there exists $a \in \mathbb{P}^1(K)$ with $O_\phi^+(a) \cap \mathbb{P}^1(K)^m$ infinite, for some $m \geq 5$ with $m \mid d$. Then $\phi(x) = (\psi(x))^m$ for some $\psi \in K(x)$.
\end{corollary}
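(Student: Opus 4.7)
The plan is to chain Corollary \ref{powercor} with a ramification analysis driven by the divisibility hypothesis $m \mid d$. Since $K$ is a number field, $\deg\phi \geq 2$, and $O_\phi(a) \cap \mathbb{P}^1(K)^m$ is infinite, Corollary \ref{powercor} places $\phi$ into one of cases (1)--(5) of Theorem \ref{maingenus}. Cases (2), (3), (4), and (5) require $m \in \{4, 3, 2, 2\}$ respectively, so the assumption $m \geq 5$ eliminates them, leaving $\phi$ in case (1): $\phi(x) = cx^j(\psi(x))^m$ with $c \in K^*$, $\psi \in K(x)$, and $0 \leq j \leq m-1$. The conclusion $\phi = \chi^m$ for some $\chi \in K(x)$ then reduces to establishing the two conditions $j = 0$ and $c \in K^{*m}$, since in that case $\chi := c^{1/m}\psi$ lies in $K(x)$ and satisfies $\chi^m = \phi$.

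To establish $j = 0$, I would exploit $m \mid d$ through ramification counting. Writing $\phi$ in the canonical form $\phi(x) = cx^e P_1(x)^m/Q_1(x)^m$ with $\gcd(P_1,Q_1) = 1$, $x \nmid P_1Q_1$, and $e \equiv j \pmod m$, every local degree of $\phi$ at a preimage of $0$ (respectively $\infty$) is divisible by $m$ except possibly at $x = 0$ and $x = \infty$, whose local degrees are congruent to $\pm j \pmod m$ depending on whether $\phi$ has a zero or a pole there. Since the sum of local degrees over preimages of $0$ (and independently over preimages of $\infty$) equals $d$, and $m \mid d$, a case analysis on the possible values of the pair $(\phi(0), \phi(\infty))$ forces the $\pm j$ contributions to cancel modulo $m$, yielding $j \equiv 0 \pmod m$ and hence $j = 0$.

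With $j = 0$ we have $\phi(x) = c(\psi(x))^m$, and a straightforward induction on $n$ shows $\phi^n(a) \equiv c \pmod{K^{*m}}$ in the group $K^*/K^{*m}$ for every $n \geq 1$. Since the orbit hypothesis guarantees $\phi^n(a) \in K^{*m}$ for infinitely many $n$, in particular for some $n \geq 1$, we obtain $c \in K^{*m}$. Combined with $j = 0$, this yields $\phi = \chi^m$ for $\chi = c^{1/m}\psi \in K(x)$, completing the proof.

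The main obstacle will be the ramification case analysis in the second paragraph, specifically the configurations in which $\phi(0)$ and $\phi(\infty)$ both lie in $\{0\}$ or both lie in $\{\infty\}$; there the two anomalous $\pm j$ contributions point toward the same target point and one must argue that they genuinely cancel rather than merely offset each other in some counting. To close off those configurations, I would combine the ramification count with the iterative relationship $\phi^r = \phi^s\cdot \tilde\psi^m$ guaranteed by Theorem \ref{iterative relationship} (for some $\tilde\psi \in K(x)$ and integers $r > s \geq 0$ with $r - s \leq m$), or extract a finer constraint from the infinitude of $m$th powers in the orbit.
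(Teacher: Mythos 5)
Your skeleton matches the paper's argument: Corollary \ref{powercor} plus $m \geq 5$ forces $\phi$ into case (1) of Theorem \ref{maingenus}, giving $\phi(x) = c x^j (\psi(x))^m$ with $0 \leq j \leq m-1$; one then needs $j = 0$ and $c \in K^{*m}$. The paper's own proof is a single sentence: it cites Corollary \ref{powercor} together with the observation that $\phi(x) = c(\psi(x))^m$ with $c \notin K^m$ forces $O_\phi(a) \cap \mathbb{P}^1(K)^m$ to have at most one element. That handles your $c \in K^{*m}$ step, but it silently assumes $j = 0$; the paper never argues for it.

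Your second paragraph correctly locates the obstruction, and it cannot be closed. Modulo $m$, the local degree of $\phi$ at $0$ is $\equiv j$ and at $\infty$ is $\equiv -j$; when $\phi(0)$ and $\phi(\infty)$ map to \emph{different} points of $\{0,\infty\}$, each contribution sits alone in one preimage sum and forces $j \equiv 0 \pmod m$, but when $\phi(0) = \phi(\infty)$, both contributions appear in the \emph{same} preimage sum and cancel as $j + (m - j) \equiv 0$ regardless of $j$, so $m \mid d$ gives no information. Invoking Theorem \ref{iterative relationship} will not help, since $\phi = cx^j\psi^m$ always satisfies $\phi^r = \phi^s(\tilde\psi)^m$ for suitable $r > s$. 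Concretely, take $K = \Q$, $m = 5$, and
\[
\phi(x) = \frac{x}{(x-1)^5(x-2)^5},
\]
so $d = 10$, $5 \mid d$, and $\phi(0) = \phi(\infty) = 0$. Here $c = 1$, $j = 1$, $\psi = 1/((x-1)(x-2))$, and $\phi$ is not a fifth power in $\C(x)$ since its zero at $x=0$ is simple. Yet $\phi(x) \equiv x \pmod{K(x)^{*5}}$, so for $a = 2^5 = 32$ every term of $O_\phi(32)$ is a fifth power in $\Q$, and the orbit is infinite (it decreases monotonically toward the fixed point $0$). So $O_\phi(32) \cap \mathbb{P}^1(\Q)^5$ is infinite while $\phi \neq \psi^5$. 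The hypothesis $m \mid d$ does force $j = 0$ when $\phi$ is a polynomial (then $d = j + m\deg g$), but not for general rational $\phi$. Your worry is thus well founded, and it points at a flaw in the corollary as stated rather than only a gap in your write-up.
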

Corollary \ref{powercor2} follows immediately from Corollary \ref{powercor} and the observation that if $\phi(x) = c(\psi(x))^m$ with $c \not\in K^m$, then for all $a \in K$, $O_\phi^+(a) \cap \mathbb{P}^1(K)^m \subseteq \{a, 0, \infty\}$, and thus is finite. 

When $\phi$ is a polynomial, we can give a particularly concrete version of Corollary \ref{powercor}:

\begin{corollary} \label{mainpoly}
Let $K$ be a finitely generated field of characteristic zero, let $\phi \in K[x]$ have degree $d \geq 2$, and fix $m \geq 2$. If there exists $a \in \mathbb{P}^1(K)$ with $O_\phi^+(a) \cap K^m$ infinite, then one of the following holds: 
\begin{enumerate}
\item $\phi(x) = cx^j(g(x))^m$ for some $g \in K[x]$, $0 \leq j \leq m-1$, and $c \in K^*$; or 
\item $m = 2$ and there is $c \in K^*$ such that $c\phi(x/c)$ is  
\begin{equation} \label{chebexception}
(-1)^d(T_d(x + 2)) - 2,
\end{equation}
where $T_d$ is the degree-$d$ monic Chebyshev polynomial. 
\end{enumerate}
\end{corollary}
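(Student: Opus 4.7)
My plan is to apply Corollary \ref{powercor} to place $\phi$ in one of the five cases of Theorem \ref{maingenus} and then restrict each to polynomial $\phi$. In case (1), writing $\psi = P/Q$ in lowest terms in $K[x]$, the polynomiality of $\phi = cx^j \psi(x)^m$ together with $\gcd(P,Q) = 1$ forces $Q$ to be constant, so $\psi \in K[x]$ and we obtain conclusion (1). The Latt\`es cases (2)--(4) are eliminated immediately: a Latt\`es map of degree $\geq 2$ has Julia set equal to all of $\mathbb{P}^1$ and hence no totally ramified fixed point, whereas any polynomial of degree $\geq 2$ has $\infty$ as a superattracting totally ramified fixed point.

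The heart of the argument is case (5) with $m = 2$. I first show that if $\phi$ is a polynomial not already in case (1), then $\psi := 1/\phi(1/x)$ cannot realize any of the forms (5a)--(5d): such a $\psi$ would have $0$ as a totally ramified fixed point of order $d = \deg \phi$, and comparing degrees and orders of vanishing at $0$, using the coprimality of numerator and denominator together with the defining identity of each subcase, yields a contradiction (for instance, in (5a) the identity evaluated at $x = 0$ forces the denominator polynomial to vanish at $0$, violating coprimality). So $\phi$ itself must have one of the forms (5a)--(5d). The polynomial condition then eliminates (5a) (the denominator carries the unremovable factor $x - C$) and (5c) (the inequality $\deg g > \deg f$ forces a genuine pole), leaving $\phi(x) = -(x-C)f(x)^2$ with $d = 2\deg f + 1$ odd from (5b), and $\phi(x) = Bx(x-C)f(x)^2$ with $d = 2\deg f + 2$ even from (5d), in each case with $g$ absorbed as a constant.

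Finally, I identify these two polynomial families with scalings of $(-1)^d T_d(x+2) - 2$. The defining identities rewrite (5b) as $\phi(x) = C - xh(x)^2$, so $\{0, C\}$ is a 2-cycle of $\phi$ and every critical orbit terminates on this cycle; (5d) analogously becomes $\phi(x) = C(1 - h(x)^2)$, with $0$ a superattracting fixed point, $C \mapsto 0$, and parallel critical behavior. A direct calculation using the Chebyshev identity $T_d(y + y^{-1}) = y^d + y^{-d}$ shows that $(-1)^d T_d(x+2) - 2$ has exactly this combinatorial structure with $C$ replaced by $-4$, for $d$ odd and $d$ even respectively. A degree and leading-coefficient count, together with the scaling $x \mapsto (-C/4)x$ matching the 2-cycles (respectively, the fixed-point/preimage pair), then identifies $\phi$ with the corresponding scaled Chebyshev map. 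The main obstacle is the case-by-case analysis in the second paragraph, which is straightforward but requires careful attention to the leading terms and defining identities in order to rule out every non-polynomial shape for $\psi$ and the subcases (5a), (5c) for $\phi$.
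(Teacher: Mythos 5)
Your overall routing — invoking Corollary~\ref{powercor} to put $\phi$ into one of the cases of Theorem~\ref{maingenus} and then restricting to polynomials — is a valid reorganization, and the first two paragraphs are sound. Eliminating the Latt\`es cases via the Julia set being all of $\PP^1$ is a legitimate alternative to the paper's observation (inside Corollary~\ref{polyorbit}) that a totally ramified fixed point would force $r(\infty)=\infty$, which is incompatible with the Latt\`es signatures. The narrowing of case (5) to (5b) and (5d) for polynomial $\phi$, and the elimination of the $1/\phi(1/x)$ alternatives via the identity evaluated at $x=0$, also check out; for example in (5a) one indeed gets $f(0)^2 = C^2 g(0)^2$, and the totally-ramified-at-$0$ requirement forces both $f(0)=0$ and $g(0)=0$, contradicting coprimality.

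The genuine gap is in the third paragraph, where you pass from the polynomial families $\phi(x) = -(x-C)f(x)^2$ (subject to $(x-C)f(x)^2 + C = xh(x)^2$) and $\phi(x) = Bx(x-C)f(x)^2$ (subject to $\phi(x)-C = -Ch(x)^2$) to the conclusion that $\phi$ is a \emph{scaling} of $(-1)^d T_d(x+2)-2$. You correctly identify the post-critical combinatorics (two-cycle $\{0,C\}$ with all finite critical points of multiplicity two mapping into $\{0,C\}$, and $\infty$ a totally ramified fixed point) and observe that $(-1)^dT_d(x+2)-2$ has the same combinatorics with $C=-4$. But a ``degree and leading-coefficient count, together with the scaling matching the 2-cycles'' does not by itself show that a polynomial with this structure \emph{is} the Chebyshev map — that requires a rigidity statement: either Milnor's Theorem~\ref{milnor} (the existence of an $r$-function with signature $(2,2,\infty)$ characterizes $\pm T_d$ up to conjugacy, Theorem~\ref{milnor2}), which is the route the paper takes in Corollary~\ref{polyorbit}, or an explicit proof that the Pell-type identity $(x-C)f(x)^2 + C = xh(x)^2$ in the prescribed degrees has an essentially unique solution. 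You gesture at the latter but do not carry it out, and without one of these inputs the step ``identifies $\phi$ with the corresponding scaled Chebyshev map'' does not follow. Also, a small error: in the (5d) family the fixed point at $0$ is not superattracting — for $T_d(x+2)-2$ one has $\phi'(0) = T_d'(2) = d^2 \neq 0$ — rather $0$ is an ordinary fixed point that is also a critical value, and $C$ maps to it.

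A more minor point on economy: by routing through Theorem~\ref{maingenus} rather than Corollary~\ref{polyorbit}, you are not actually avoiding Milnor's rigidity theorems — they are consumed upstream in the proof of Theorem~\ref{classification} on which Theorem~\ref{maingenus} depends — so the logical burden is the same. What your route does buy is that the forms in (5b) and (5d) are already stated over $K$ with $K$-coefficients, so you do not need the separate field-of-definition argument (Lemma~\ref{anypolys} and the $T_d''(2)$ computation) that the paper carries out in Section~\ref{fielddef}, provided you actually complete the Chebyshev identification.
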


Note that cases (1) and (2) of Corollary \ref{mainpoly} are mutually exclusive, unlike the cases in Theorem \ref{maingenus}.  Indeed, for all $d \geq 2$ we have that $T_d$ maps $-2$ to $2 \cdot (-1)^d$ with multiplicity 1, implying that the map in case (2) maps $-4$ to $0$ with multiplicity 1, and hence is not of the form given in case (1). The polynomials of the form \eqref{chebexception} are conjugates of $T_d$ that contain $0$ in their post-critical set but do not belong to case (1).  For $d = 2, 3, 4, 5$ these maps are:
$x(x + 4), -(x+4)(x+1)^2, x(x+4)(x+2)^2$, and $-(x+4)(x^2 + 3x + 1)^2,$ respectively. 

Our final corollary shows that when $K = \Q$ and $\deg \phi = 2$, we obtain very strong consequences when there is $a \in \Q$ with $O_\phi^+(a) \cap \Q^2$ infinite:
\begin{corollary} \label{quadpoly}
A quadratic polynomial $\phi \in \Q[x]$ has a rational orbit containing infinitely many distinct squares if and only if either
\begin{enumerate}
\item[(a)] \text{$\phi$ is the square of a linear polynomial with rational coefficients}, or 
\item[(b)] 
\text{$\phi(x) = cx^2 + 4x$ with $c \in \Q^*$}.
\end{enumerate} 
\end{corollary} 

The paper is organized as follows. Sections \ref{preliminaries}-\ref{four} contain the geometric portion of the proofs of Theorems \ref{iterative relationship} and \ref{maingenus}. 
In Section \ref{preliminaries} we study the genera of irreducible components of super-elliptic curves, and show in Corollary \ref{rhocor} that $g_n$ remains bounded as $n$ grows if and only if $0$ and $\infty$ satisfy a ramification condition on iterated preimages that we call $m$-branch abundance (see Definition \ref{rhodef}). We also show that if $g_n$ is unbounded, then it grows exponentially with $n$ (Theorem \ref{dichotomy}). 
In Section \ref{m5sec}, we study rational functions with two $m$-branch abundant points in $\PP^1(\C)$ when $m \geq 5$. In Section \ref{prime} we study ramification among iterated preimages of $m$-branch abundant points when $m$ is prime, with a view to understanding the most complicated cases $m = 2$ and $m = 3$; this culminates in two classification results (Theorems \ref{3class} and \ref{2class}). In Section \ref{four} we study maps with two $4$-branch abundant points. In Section \ref{fielddef} we give the results that handle the arithmetic portion of the proofs of Theorems \ref{iterative relationship} and \ref{maingenus}. In Section \ref{firstthmsec} we state useful results of Milnor \cite{milnor} on Latt\`es maps, and combine them with material from the five previous sections to prove Theorem \ref{maingenus}. In Section \ref{pfmain} we give the proof of Theorem \ref{iterative relationship}, which involves checking numerous cases. In Section \ref{pfremain} we give the proofs of the remaining results from the introduction. In Section \ref{example} we present the example mentioned after Theorem \ref{main ML}.

\section{$m$-branch abundant points and the genus of $\phi^n(x) = y^m$} \label{preliminaries}

Recall from the discussion before Theorem \ref{maingenus} the definition of the ramification index $e_\phi(z)$ of a rational function $\phi \in \C(x)$ at $z \in \PP^1(\C)$. 
We refer to $z \in \PP^1(\C)$ as a \textit{ramification point} for $\phi$ if $e_\phi(z) > 1$. 
An easy argument on compositions of power series gives the following special case of the chain rule for ramification indices (see \cite[Section 2.5]{beardon}): 
\begin{equation} \label{chain}
e_{\phi^n}(z) = \prod_{i = 0}^{n-1} e_\phi(\phi^{i}(z)),
\end{equation}
and hence $e_{\phi^n}(z)$ ``remembers" ramification of the map $\phi$ at each of $z, \phi(z), \ldots, \phi^{n-1}(z)$. 
An essential tool throughout the present paper comes in the form of the Riemann-Hurwitz formula (see e.g. \cite[Section 2.7]{beardon} for a proof):  
$$\sum_{z \in \PP^1(\C)} (e_{\phi}(z) - 1) = 2d - 2.$$

For $\alpha \in \C, \phi \in \C(x)$, and $n \geq 0$, we use the standard notation of $\phi^{-n}(\alpha)$ to denote the set $\{\beta \in \C : \phi^n(\beta) = \alpha\}$. We introduce the following terminology:
\begin{definition} \label{rhodef}
Fix $m \geq 2$, let $\phi \in \C(x)$ be non-constant, and let $\alpha \in \mathbb{P}^1(\C)$. Define $\rho_n(\alpha)$ to be the number of $z \in \phi^{-n}(\alpha)$ with $e_{\phi^n}(z)$ not divisible by $m$. We say that $\alpha$ is \textbf{$\boldsymbol m$-branch abundant} for $\phi$ if $\rho_n(\alpha)$ is bounded as $n \to \infty$.  
\end{definition}
Note that if $m_1 \mid m_2$, then $m_1 \nmid e_{\phi^n(z)}$ implies $m_2 \nmid e_{\phi^n(z)}$, and hence if and $\alpha$ is $m_2$-branch abundant for $\phi$, then $\alpha$ is also $m_1$-branch abundant for $\phi$.  We remark that in \cite{abc}, the authors call $\alpha \in \mathbb{P}^1(\C)$ \textit{dynamically ramified} for $\phi$ if the set $\bigcup_{n \geq 1} \{z \in \phi^{-n}(\alpha) : e_{\phi^{n}}(z) = 1\}$ is finite. The definition of an $m$-branch abundant point is weaker in that it only considers $z \in \phi^{-n}(\alpha)$ with $m \nmid e_{\phi^n}(z)$, and moreover it only asserts a bounded number of such points as $n$ grows, rather than finiteness of the full set of such points as $n$ varies. 

A primary goal of this section is to establish a relationship between the existence of $m$-branch abundant points for $\phi$ and the genus of (irreducible factors of) $C_n : \phi^n(x) = y^m$. For curves of this form, the irreducible factors are easily found: 

\begin{proposition} \label{irred}
Let $C$ be the curve defined (over $\C$) by $\psi(x) = y^m$, where $\psi(x) = c \prod_{i = 1}^k (x - \alpha_i)^{e_i} \in \C(x)$ and $e_i \in \Z \setminus \{0\}$ for all $i$. Let $a$ be the greatest positive integer dividing $m$ and all the $e_i$, and put $\lambda(x) = \sqrt[a]{c} \prod_{i = 1}^k (x - \alpha_i)^{e_i/a} \in \C(x)$ for some fixed choice of $\sqrt[a]{c}$.  Let $\zeta_a$ be a primitive $a^{\text{th}}$ root of unity. Then the irreducible factors of $C$ are the curves 
\begin{equation} \label{irredfactors}
y^{m/a} = \zeta_{a}^k\lambda(x), \qquad k = 0, \ldots, a-1. 
\end{equation}
\end{proposition}

\begin{proof}
We show that each curve is irreducible, and then it follows by comparing the degrees in $y$ that they must comprise all the irreducible components of $C$. By assumption $\zeta_{a}^k\lambda(x)$ is not a $p$th power in $\C(x)$ for any prime $p$ dividing $m$, and it follows that $y^{m/a} - \zeta_{a}^k\lambda(x)$ is irreducible as a polynomial in $y$, whence each of the curves in \eqref{irredfactors} is irreducible.  
\end{proof}

We may determine the genus of every curve of the form \eqref{irredfactors} quite explicitly:

\begin{proposition} \label{genus}
Let $C$ and $a$ be as in Proposition \ref{irred}, and put $m' = m/a$ and $e_i' = e_i/a$. Then every irreducible factor of $C$ has the same genus $g$, given by
\begin{equation} \label{genuseq}
g = 1 + \left(\frac{k-1}{2} \right)m' - \frac{1}{2} \left(\gcd(m',e_1' + \cdots + e_k') +  \sum_{i = 1}^k \gcd(m', e_i') \right).
\end{equation}
\end{proposition}

\begin{proof}
A straightforward application of Proposition \ref{irred} and a genus formula for irreducible curves given by variables-separated rational functions first used by Ritt \cite{ritt1}. The first explicit statement and proof of the formula in the general situation seems to be \cite[Proposition 2]{fried2}; for another statement and proof, see \cite[Proposition 4.1]{bilutichy}. Many other authors have used various versions of this formula, e.g. \cite[Proposition 2.6]{az2} and \cite[Corollary 2.1]{pak1}. 
Another proof of the present proposition may be given by noting that the genus of each irreducible factor is equal to the genus of the function field $\C(x, \sqrt[m']{\lambda(x)})$. Then one can directly apply the formula in \cite[Proposition 3.7.3]{stichtenoth} for the genus of a Kummer extension of function fields.
\end{proof}

\begin{corollary} \label{genuscor}
Let $g$ be as in Proposition \ref{genus}, and denote by $t$ the number of $i \in \{1, \ldots, k\}$ such that $m \nmid e_i$. If $t = 0$, then $g = 0$, and if $t > 0$ then 
\begin{equation} \label{genusbound}
\lceil (t/2) - 1 \rceil \leq g \leq (m-1)(t-1)/2,
\end{equation} 
where $\lceil \cdot \rceil$ denotes the ceiling function. 
\end{corollary}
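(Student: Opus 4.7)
The plan is to apply Proposition \ref{genus} directly, treating $t=0$ and $t>0$ separately. When $t=0$ every $e_i$ is divisible by $m$, so $a=m$ and $m'=1$; the formula then collapses term by term to $g=0$.

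Assume now that $t>0$, which forces $m'\geq 2$. For the upper bound I would substitute the trivial lower estimates $\gcd(m',e_i')\geq 1$ when $m\nmid e_i$, $\gcd(m',e_i')=m'$ when $m\mid e_i$, and $\gcd(m',\sum_i e_i')\geq 1$, into the formula. After cancellation this simplifies to $g\leq (t-1)(m'-1)/2$, and since $m'$ divides $m$ we obtain the claimed bound $(t-1)(m-1)/2$.

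For the lower bound the key estimate is that $\gcd(m',e_i')\leq m'/2$ whenever $m\nmid e_i$, because the gcd is then a proper divisor of $m'\geq 2$. Combined with $\gcd(m',\sum e_i')\leq m'$ and substituted into the formula, this yields $g\geq 1+m'(t-4)/4$, which after taking integer parts gives $g\geq \lceil t/2-1\rceil$ for all $t\geq 4$. The cases $t=1,2$ are automatic since $\lceil t/2-1\rceil\leq 0$.

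The case $t=3$ is where I expect the main difficulty. If $m'\nmid e_1'+e_2'+e_3'$ (indexing the three factors with $m\nmid e_i$), then the sharper bound $\gcd(m',\sum e_i')\leq m'/2$ already gives $g\geq 1$. Otherwise $m'\mid e_1'+e_2'+e_3'$, and the formula reduces to the requirement $d_1+d_2+d_3\leq m'$, where $d_i=\gcd(m',e_i')$. The essential observation is that $\gcd(d_i,d_j)$ divides both $e_i'$ and $e_j'$, hence divides $-e_k' \pmod{m'}$, and therefore divides $d_k$; this forces all three pairwise gcds to equal their common gcd $D=\gcd(d_1,d_2,d_3)$. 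Writing $a_i=d_i/D$ and $M=m'/D$ reduces the claim to $a_1+a_2+a_3\leq M$ for pairwise coprime proper divisors $a_i$ of $M$, which a brief case analysis on $K=M/(a_1a_2a_3)\geq 1$ confirms; the one would-be obstruction, $a_i=1$ for all $i$ and $M=2$, is excluded because it would force each $e_i'\equiv m'/2\pmod{m'}$, and their sum $3m'/2$ is not divisible by $m'$.
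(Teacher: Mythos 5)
Your proof is correct, and for the $t=0$ case, the upper bound, and the lower bound when $t \geq 4$ it follows essentially the same route as the paper: substitute the trivial or tight estimates for $\gcd(m',e_i')$ and $\gcd(m',\sum e_i')$ into \eqref{genuseq} and simplify. The genuinely interesting divergence is at $t=3$, where your more elaborate argument --- the dichotomy on whether $m'\mid \sum e_i'$, the observation that $\gcd(d_i,d_j)\mid d_k$ (hence all three pairwise gcds coincide), and the reduction to $a_1+a_2+a_3\leq M$ for pairwise coprime proper divisors of $M$ --- is not extra caution but is actually needed. The paper arrives at the bound $g\geq 1+m'(t/4-1)$ and then asserts that $m'\geq 2$ yields $g\geq t/2-1$; but that inference requires $1+m'(t/4-1)\geq t/2-1$, which rearranges to $(m'-2)(t-4)\geq 0$ and is false precisely when $t=3$ and $m'\geq 3$. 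The subsequent strictness argument (equality in \eqref{two} is incompatible with odd $t$) only upgrades the inequality to $g>1+m'(t/4-1)$, and for $t=3,\ m'\geq 5$ this still gives only $g\geq 0$, not the required $g\geq 1$. Your analysis of the subcase $m'\mid e_1'+e_2'+e_3'$ supplies exactly the number-theoretic input the paper's argument skips over, and your exclusion of the $M=2$ obstruction via the parity of $3m'/2$ modulo $m'$ is the right way to close it. So you have not merely reproved the corollary but filled a small gap in the published proof.
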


\begin{remark} The bounds are sharp, as evidenced by the hyperelliptic curves $y^2 = x^t - 1$. 
\end{remark}

\begin{proof}
First note that $t = 0$ if and only if $m' = 1$, and in this case \eqref{genuseq} reduces to $g = 0$. Assume now that $t \geq 1$ and $m' \geq 2$. If $m \nmid e_i$, then $m' \nmid e_i'$, giving us $\gcd(m', e_i') \leq m'/2$.  
From \eqref{genuseq} we obtain 
\begin{equation} \label{two}
g \geq 1 + \left(\frac{k-1}{2} \right)m' - \frac{1}{2} \left(m'(1 + (k-t)) +  t \frac{m'}{2}\right) = 1 + m'\left(\frac{t}{4} - 1\right),
\end{equation}
with equality if and only if $m' \mid e_1' + \cdots + e_k'$ and $\gcd(m', e_i') = m'/2$ for each $i$ with $m' \nmid e_i'$. Because $m' \geq 2$, \eqref{two} gives
$g \geq (t/2) - 1.$ This establishes the lower bound of \eqref{genusbound} when $t$ is even. Assume then that $t$ is odd. Then if \eqref{two} is an equality, we have $\gcd(m', e_i') = m'/2$ for an odd number of values of $i$ and $\gcd(m', e_i') = m'$ for the rest. Thus $e_1' + \cdots + e_k' \equiv m'/2 \bmod{m'}$, and therefore $m' \nmid (e_1' + \cdots + e_k')$, a contradiction. We have shown that \eqref{two} is a strict inequality, giving $g > (t/2) - 1$. As $g$ is an integer, we conclude $g \geq  \lceil (t/2) - 1 \rceil$. 

To prove the upper bound of \eqref{genusbound}, note that \eqref{genuseq} gives
$$g \leq 1 + \left(\frac{k-1}{2} \right)m' - \frac{1}{2} \left(m'(k-t) +  t + 1 \right) = \frac{(m'-1)(t-1)}{2} \leq \frac{(m-1)(t-1)}{2},$$
as desired. 
\end{proof}

Write $\phi^n(x) = c \prod_{i = 1}^k (x - \alpha_i)^{e_i}$, and take $t_n$ to be the number of $i \in \{1, \ldots, k\}$ such that $m \nmid e_i$. Then $t_n$ is closely related to the quantity $\rho_n(0) + \rho_n(\infty)$, where $\rho_n$ is defined in Definition \ref{rhodef}. Indeed, $\rho_n(0) + \rho_n(\infty) = t_n$ unless $\infty \in \phi^{-n}(\infty) \cup \phi^{-n}(0)$ and $m \nmid e_{\phi^n}(\infty)$, in which case $\rho_n(0) + \rho_n(\infty) = t_n + 1$. We thus obtain:

\begin{corollary} \label{rhocor}
Let $\phi \in \C(x)$ have degree $d \geq 2$. For $n \geq 1$, let $C_n$ be the curve defined (over $\C$) by $\phi^n(x) = y^m$, let $g_n$ be the genus of every irreducible factor of $C_n$, and put $\rho_n(\phi) := \rho_n(0) + \rho_n(\infty)$, where $\rho_n(0)$ and $\rho_n(\infty)$ are as in Definition \ref{rhodef}. Then either $\rho_n(\phi) = g_n = 0$ or
\begin{equation*} 
\lceil (\rho_n(\phi) - 3)/2 \rceil \leq g_n \leq (m-1)(\rho_n(\phi)-1)/2
\end{equation*}
In particular, $g_n$ is bounded as $n \to \infty$ if and only if both $0$ and $\infty$ are $m$-branch abundant for $\phi$. 
\end{corollary}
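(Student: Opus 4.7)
The plan is to reduce the statement to a direct application of Corollary \ref{genuscor}. First I would write $\phi^n(x) = p_n(x)/q_n(x)$ with coprime $p_n, q_n \in K[x]$, so that $C_n$ is birational to the super-elliptic curve $y^m = \psi_n(x)$ where $\psi_n(x) := p_n(x)\,q_n(x)^{m-1}$. Factoring $\psi_n$ over $\C$, the finite root multiplicities record ramification under $\phi^n$: a finite $\alpha \in \phi^{-n}(0)$ contributes a factor $(x-\alpha)^{e_{\phi^n}(\alpha)}$, while a finite $\alpha \in \phi^{-n}(\infty)$ contributes a factor $(x-\alpha)^{(m-1)e_{\phi^n}(\alpha)}$.

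Next I would compute the quantity $t_n$ from Corollary \ref{genuscor}, namely the number of distinct finite roots of $\psi_n$ whose multiplicity is not divisible by $m$. Since $\gcd(m-1,m)=1$, the condition $m \nmid (m-1)e_{\phi^n}(\alpha)$ is equivalent to $m \nmid e_{\phi^n}(\alpha)$, so $t_n$ counts exactly those finite $\alpha \in \phi^{-n}(0) \cup \phi^{-n}(\infty)$ with $m \nmid e_{\phi^n}(\alpha)$. Comparing with $\rho_n(\phi) = \rho_n(0)+\rho_n(\infty)$, which additionally counts the point at infinity whenever $\infty \in \phi^{-n}(0) \cup \phi^{-n}(\infty)$ with $m \nmid e_{\phi^n}(\infty)$, I would record the bookkeeping inequality $\rho_n(\phi)-1 \leq t_n \leq \rho_n(\phi)$; the discrepancy is at most one because $\phi^n(\infty)$ can equal at most one of $0$ or $\infty$.

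Feeding these bounds into Corollary \ref{genuscor} then yields both inequalities at once. The upper bound $g_n \leq (m-1)(t_n-1)/2 \leq (m-1)(\rho_n(\phi)-1)/2$ is immediate; the lower bound $g_n \geq \lceil t_n/2 - 1\rceil \geq \lceil(\rho_n(\phi)-3)/2\rceil$ follows from $t_n \geq \rho_n(\phi)-1$, with the degenerate case $t_n=0$ handled separately (there $g_n = 0$ and the claimed lower bound is non-positive since $\rho_n(\phi) \leq 1$). For the boundedness conclusion, both bounds depend monotonically on $\rho_n(\phi)$ and diverge with it, so $g_n$ is bounded as $n \to \infty$ if and only if $\rho_n(\phi)$ is, which by non-negativity of the summands is equivalent to both $\rho_n(0)$ and $\rho_n(\infty)$ being bounded, i.e., to both $0$ and $\infty$ being $m$-branch abundant for $\phi$. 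The only subtle point in the proof is the bookkeeping around the point at infinity; beyond that, the argument is a mechanical substitution into previously established facts.
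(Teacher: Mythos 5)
Your argument is correct and follows the same route the paper takes in the paragraph immediately preceding the corollary: relate the count $t_n$ of roots-and-poles of $\phi^n$ with multiplicity not divisible by $m$ to $\rho_n(\phi)$ (these differ by at most one, the point at infinity being the only possible source of discrepancy) and then substitute into Corollary \ref{genuscor}. The only cosmetic difference is the choice of birational model: the paper feeds the rational function $\phi^n$ with signed exponents $e_i \in \Z\setminus\{0\}$ directly into Proposition \ref{genus}, whereas you pass to the polynomial model $y^m = p_n(x)q_n(x)^{m-1}$ and invoke $\gcd(m-1,m)=1$ to see that the two models give the same $t_n$; both are fine.

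One remark, which applies equally to the paper's own statement and not just to your write-up: when $\rho_n(\phi)=0$ (so $t_n=0$, e.g.\ $\phi(x)=x^m$), Corollary \ref{genuscor} gives $g_n=0$, but the displayed upper bound $(m-1)(\rho_n(\phi)-1)/2$ is negative, so the inequality as literally stated fails. The honest upper bound in that degenerate case is just $g_n \le \max\{0,(m-1)(\rho_n(\phi)-1)/2\}$. You flagged the $t_n=0$ case for the lower bound but let the same case slip through for the upper bound. This does not affect the boundedness characterization, which is what is actually used downstream, but if you want your proof to be airtight you should carve out $t_n=0$ on both sides.
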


A consequence of Corollary \ref{rhocor} is a result on the growth rate of $g_n$ as $n \to \infty$ in the case where $g_n$ is unbounded.

\begin{theorem} \label{dichotomy}
Let $\phi$, $C_n$ and $g_n$ be as in Corollary \ref{rhocor}. If $g_n$ is unbounded as $n \to \infty$, then $g_n \geq \kappa d^n$ for some constant $\kappa$. 
\end{theorem}

\begin{proof}
Because $g_n$ is unbounded, we have that $\rho_n(\phi)$ is unbounded, and without loss of generality say that $\rho_n(0)$ is unbounded. If $\rho_n(\phi) \geq \kappa d^n$, then after possibly adjusting $\kappa$ the same conclusion holds for $g_n$, whence it suffices to give an exponential lower bound for $\rho_n(0)$. 

Because $\rho_n(0)$ is unbounded, the set 
$$Z = \{z \in \PP^1(\C) : \text{$\phi^{k}(z)=0$ and $m \nmid e_{\phi^k}(z)$ for some $k \geq 1$\}}$$
is infinite.  
Observe that $O_\phi^+(0) \cap Z$ must be finite: if $0$ is periodic then $O_\phi^+(0)$ is itself finite, while if $0$ is not periodic then $O_\phi^+(0) \cap Z = \emptyset$.
Consider the set 
$$R = \{c \in \PP^1(\C) : \text{$e_\phi(c) > 1$ and $\phi^i(c) = 0$ for some $i \geq 0$}\}.$$
Now $O_\phi^+(c)\setminus O_\phi^+(0)$ is finite for each $c \in R$, and as $O_\phi^+(0) \cap Z$ is finite, we have that $O_\phi^+(c) \cap Z$ is finite. But $R$ is finite by Riemann-Hurwitz, and thus $\bigcup_{c\in R} O_\phi^+(c)$ contains only finitely many elements of $Z$.  Therefore there exists $z \in Z$ that is not in the orbit of any ramification point of $\phi$.  From the definition of $Z$, $\phi^{k}(z)=0$ for some $k \geq 1$.  Then for all $n\geq k$, $\rho_n(0)\geq (\frac{1}{d^k})(d^n)$, furnishing the desired exponential lower bound. 
\end{proof}

Observe that when combined with Theorem \ref{iterative relationship}, Theorem \ref{dichotomy} yields the result that the sequence $(g_n)_{n \geq 1}$ is either bounded by $1$ or grows exponentially.

\section{Maps with two $m$-branch abundant points, $m \geq 5$} \label{m5sec}

We begin with a definition and proposition that will be useful in proving Theorem \ref{maingenus}.

\begin{definition} \label{trivial} For a fixed integer $m \geq 2$, rational function $\phi \in \C(x)$, and distinct $\alpha_1, \alpha_2 \in \mathbb{P}^1(\C)$, we call $\phi$  \textbf{$\boldsymbol m$-trivial with respect to $\{\alpha_1, \alpha_2\}$} if we have $m \mid e_{\phi}(z)$ for all $z \in \phi^{-1}(\{\alpha_1, \alpha_2\}) \setminus \{\alpha_1, \alpha_2\}$. 
\end{definition}

\begin{proposition} \label{triv}
For any integer $m \geq 2$, a rational function $\phi \in \C(x)$ is $m$-trivial with respect to $\{0, \infty\}$ if and only if it is of the form
\begin{equation} \label{trivform10}
cx^j(\psi(x))^m \qquad \text{with $\psi(x) \in \C(x)$, $0 \leq j \leq m-1$, and $c \in \C^*$}.
\end{equation}
\end{proposition}
\begin{proof}
Suppose that $\phi$ is $m$-trivial with respect to $\{0, \infty\}$. For each $z \in \phi^{-1}(0) \setminus \{0,\infty\}$, the factor $(x-z)$ appears in the numerator of $\phi$ with multiplicity $e_{\phi}(z)$.  The same holds for $z \in \phi^{-1}(\infty) \setminus \{0,\infty\}$ and the denominator of $\phi$.   
Letting $U = \phi^{-1}(0)\setminus \{0,\infty\}$ and $V = \phi^{-1}(\infty)\setminus \{0,\infty\}$, we can write
$$
\phi(x) = c\cdot x^j \cdot  \frac{\prod_{u \in U} (x - u)^m}{\prod_{v \in V} (x - v)^m}
$$
for some $c \in \C^*$ (we cannot have $c = 0$ since $\phi$ is non-constant). Thus $\phi(x) = cx^j \psi(x)^m$ with $\psi(x) = \prod_{u \in U}(x-u)/ \prod_{v \in V} (x-v)$. If necessary, we may absorb $m$th powers of $x$ into $(\psi(x))^m$, allowing us to assume $0 \leq j \leq m-1$.

Suppose now that $\phi(x) = cx^j(\psi(x))^m$. Then $m \mid e_{\phi(z)}$ for all $z \in \phi^{-1}(\{0, \infty\}) \setminus \{0, \infty\}$, and it follows that $\phi$ is $m$-trivial with respect to $\{0, \infty\}$. 
\end{proof}

In light of Proposition \ref{triv} and Corollary \ref{rhocor}, in order to prove Theorem \ref{maingenus} we wish to show that in many cases a map for which $0$ and $\infty$ are $m$-branch abundant must in fact be $m$-trivial with respect to $\{0, \infty\}$. The purpose of this section is to prove this in the case $m \geq 5$, which is done in Theorem \ref{mge5}. There is no special advantage to assuming that $\phi$ has $0$ and $\infty$ as $m$-branch abundant points, and so we assume only that $\phi$ has two distinct $m$-branch abundant points $\alpha_1$ and $\alpha_2$.

We begin with several preparatory lemmas, which will be of use in later sections as well as this one. The first shows that when $m$ is a prime power, $m$-branch abundance of $\alpha \in \PP^1(\C)$ propagates to certain iterated preimages of $\alpha$.
\begin{lemma}\label{primelemma}
Let $\phi \in \C(x)$ and $p$ be prime. Suppose that $\alpha \in \PP^1(\C)$ is $p^r$-branch abundant for $\phi$, where $r \geq 1$, and let $\beta \in \PP^1(\C)$ satisfy $\phi^k(\beta) = \alpha$ for some $k \geq 1$. If $p^r \nmid e_{\phi^k}(\beta)$, then $\beta$ is $p$-branch abundant for $\phi$. Furthermore, if $p \nmid e_\phi(\phi^i(\beta))$ for each $i = 0, 1, \ldots, k-1$, then $\beta$ is $p^r$-branch abundant for $\phi$. 
\end{lemma}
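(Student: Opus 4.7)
The plan is to transfer the bounded preimage counts at $\alpha$ (given by the $p^r$-branch abundance hypothesis, at depth $n+k$) back to bounded preimage counts at $\beta$ (at depth $n$), via the chain rule \eqref{chain}. The primality of $p$ enters through the $p$-adic valuation $v_p$: the condition $p \nmid a$ is equivalent to $v_p(a) = 0$, and $v_p$ splits additively across products.

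The setup step is to observe that if $\phi^n(w) = \beta$, then $\phi^{n+k}(w) = \alpha$, and applying \eqref{chain} to $\phi^{n+k}$ at $w$ yields
\[
e_{\phi^{n+k}}(w) \;=\; e_{\phi^n}(w) \cdot e_{\phi^k}(\phi^n(w)) \;=\; e_{\phi^n}(w) \cdot e_{\phi^k}(\beta),
\]
so that $v_p(e_{\phi^{n+k}}(w)) = v_p(e_{\phi^n}(w)) + v_p(e_{\phi^k}(\beta))$. This links divisibility of $e_{\phi^n}(w)$ to divisibility of $e_{\phi^{n+k}}(w)$ by a fixed shift $v_p(e_{\phi^k}(\beta))$ that depends only on $\beta$, $k$, and $\phi$, not on $n$ or $w$.

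For the first claim, I would combine the hypothesis $p^r \nmid e_{\phi^k}(\beta)$ (so the shift is at most $r-1$) with primality of $p$ (so that $p \nmid e_{\phi^n}(w)$ forces $v_p(e_{\phi^n}(w)) = 0$) to conclude that any $w$ with $\phi^n(w) = \beta$ and $p \nmid e_{\phi^n}(w)$ automatically satisfies $p^r \nmid e_{\phi^{n+k}}(w)$. The identity map is then an injection from the set of $w \in \phi^{-n}(\beta)$ with $p \nmid e_{\phi^n}(w)$ into the set of $z \in \phi^{-(n+k)}(\alpha)$ with $p^r \nmid e_{\phi^{n+k}}(z)$; the latter is bounded in $n$ by the $p^r$-branch abundance of $\alpha$, hence so is the former. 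The furthermore statement is the same argument under a stronger hypothesis: $p \nmid e_\phi(\phi^i(\beta))$ for $0 \leq i \leq k-1$ together with \eqref{chain} applied at $\beta$ forces $v_p(e_{\phi^k}(\beta)) = 0$, so the shift vanishes and the inequality $v_p(e_{\phi^n}(w)) < r$ transfers directly to $v_p(e_{\phi^{n+k}}(w)) < r$.

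There is no real obstacle here beyond bookkeeping of valuations; the lemma is short. The genuine content worth highlighting is the role of primality: for a composite modulus $m$, the equivalence between $m \nmid a$ and a single-prime-power valuation of $a$ being small breaks down, so the clean injection above has no immediate analogue. This is precisely why the classification in this section is restricted to prime $m$.
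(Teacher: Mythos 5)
Your proof is correct and is essentially the paper's argument, recast in $p$-adic valuation notation: both rest on the chain rule identity $e_{\phi^{n+k}}(w) = e_{\phi^n}(w)\,e_{\phi^k}(\beta)$ and use primality to produce an injection (the identity map) from the small-ramification preimages of $\beta$ at depth $n$ into the small-ramification preimages of $\alpha$ at depth $n+k$. Phrasing the primality step as ``$v_p$ is additive, so a fixed shift by $v_p(e_{\phi^k}(\beta))$ carries $v_p = 0$ to a value $< r$'' is a clean way to spell out what the paper compresses into ``the primality of $p$ gives.''
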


\begin{proof}
Consider $z \in \phi^{-n}(\beta)$, implying in particular that $z \in \phi^{-(n+k)}(\alpha)$. Note that 
\begin{equation} \label{chainram}
e_{\phi^{n+k}}(z) = e_{\phi^k}(\phi^n(z)) \cdot e_{\phi^n}(z) = e_{\phi^k}(\beta) \cdot e_{\phi^n}(z).
\end{equation} 
If $p^r \nmid e_{\phi^k}(\beta)$, then \eqref{chainram} and the primality of $p$ give 
\begin{equation} \label{third}
\#\{z \in \phi^{-n}(\beta) : p \nmid e_{\phi^n}(z)\} \leq \#\{z \in \phi^{-(n+k)}(\alpha) : p^r \nmid e_{\phi^{n+k}}(z)\}.
\end{equation}
Because $\alpha$ is $p^r$-branch abundant, the right-hand side of \eqref{third} is bounded as $n$ grows, and thus $\beta$ is $p$-branch abundant. 

If $p \nmid e_\phi(\phi^i(\beta))$ for $i = 0, 1, \ldots, k-1$, then $p \nmid e_{\phi^k}(\beta)$ by \eqref{chain}. Arguing as in the previous paragraph, it follows that $\beta$ is $p^r$-branch abundant.
\end{proof}

For $\alpha \in \PP^1(\C)$ and $\phi \in \C(x)$, we often wish to consider the union of the sets $\phi^{-n}(\alpha)$ for $n \geq 0$. We thus introduce the following standard definition:
\begin{definition} \label{backdef}
Let $\phi \in \C(x)$ and $\alpha \in \PP^1(\C)$. The \textit{backwards orbit} \label{backorbit} of $\alpha$ under $\phi$ is 
\[
O^-_\phi(\alpha) := \{\beta \in \PP^1(\C) : \text{there exists $n \geq 0$ with $\phi^n(\beta) = \alpha$}\}, \label{backdef},
\] 
For $S \subset \PP^1(\C)$, the backwards orbit $O_\phi^-(S)$ of $S$ is the union of $O_\phi^-(\alpha)$ over $\alpha \in S$.
\end{definition}
Note that, as in the case with forward orbits, we have $\alpha \in O_\phi^{-}(\alpha)$.

The next lemma is crucial in our analysis. We often apply it to a preimage of a $p$-branch abundant point, and hence we use $\beta$ instead of $\alpha$ in the statement.

\begin{lemma}\label{comp3}
Let $S$ be a finite subset of $\PP^1(\C)$, and suppose that $\phi \in \C(x)$, $p$ is prime, and $\beta \in \phi^{-1}(S) \setminus S$ is $p$-branch abundant for $\phi$.
Then there exists $y \in O_\phi^-(\beta)$ satisfying the following conditions:
\begin{enumerate}
\item If $n\geq 0$ is minimal such that $\phi^{n}(y)=\beta$, then $S\cap \{y,\phi(y),\ldots,\phi^{n}(y)\}$ is empty. 
\item $p \mid e_{\phi}(z)$ for all $z \in \phi^{-1}(y)\setminus S$.
\end{enumerate}
Moreover, suppose that there are distinct $\{\beta_1, \ldots, \beta_k\} \subseteq \phi^{-1}(S) \setminus S$ and (not necessarily distinct) primes $p_1, \ldots, p_k$ such that for all $i = 1, \ldots, k$, we have that $\beta_i$ is $p_i$-branch abundant for $\phi$, and $y_i$ satisfies conditions (1) and (2) with respect to $\beta_i$ and $S$. Then $y_i \neq y_j$ for all $i \neq j$.
\end{lemma}

\begin{proof}
If each $z \in \phi^{-1}(\beta)\setminus S$ satisfies $p \mid e_{\phi}(z)$, then we may take $y = \beta$ (note $\beta\notin S$ by assumption). Otherwise, construct a (possibly finite) sequence $\gamma_1, \gamma_2, \ldots$ in $\PP^1(\C)$ as follows. Choose $\gamma_1 \in \phi^{-1}(\beta)\setminus S$ with $p \nmid e_\phi(\gamma_1)$. If $\gamma_i$ is chosen for $i \geq 1$,  then select $\gamma_{i+1} \in \phi^{-1}(\gamma_i)\setminus S$ with $p \nmid e_\phi(\gamma_{i+1})$.  If no such $\gamma_{i+1}$ exists, then the sequence terminates with $\gamma_i$, and thus we may take $y = \gamma_i$ to satisfy conditions (1) and (2) of the theorem. 

By construction, $\gamma_i \not\in S$ for all $i$.  Therefore all the $\gamma_i$ are distinct, for if $\gamma_i = \gamma_j$ for $i > j$, then $\gamma_i$ is periodic under $\phi$ and its orbit is $\{\gamma_i, \gamma_{i-1}, \ldots, \gamma_{j+1}\}$. But $\gamma_i \in O_\phi^-(S)$, and so $O_\phi^+(\gamma_i)$ intersects $S$, implying that $\gamma_\ell \in S$ for some $j < \ell \leq i$, which is a contradiction.

It thus suffices to show that the set $\{\gamma_i : i \geq 1\}$ is finite. Note that by Lemma \ref{primelemma}, each $\gamma_i$ is $p$-branch abundant for $\phi$. Consider the set $R$ of all $c \in \PP^1(\C)$ with $e_\phi(c) > 1$ and $c \in O_\phi^-(S)$. Observe that 
\begin{equation} \label{orbits}
\bigcup_{c\in R} O_\phi^+(c) \subseteq \left( \bigcup_{c\in R} (O_\phi^+(c)\setminus O_\phi^+(S)) \right) \cup O_\phi^+(S),
\end{equation} 
where $O_\phi^+(S) = \bigcup_{s \in S} O_\phi^+(s)$. 
Now for each $c \in R$, we have that $O_\phi^+(c)\setminus O_\phi^+(S)$ is finite, since $c \in O_\phi^-(S)$.  We claim that only finitely many of the $\gamma_i$ lie in $O_\phi^+(S)$. Otherwise, the finiteness of $S$ and the pigeonhole principle imply that infinitely many of the $\gamma_i$ lie in a single orbit $O_\phi^+(s)$ for some $s \in S$. Because
each $\gamma_i$ maps into $S$ under enough iterations of $\phi$, the orbit $O_\phi^+(s)$ visits $S$ infinitely often. The finiteness of $S$ then gives $\phi^{n_1}(s) = \phi^{n_2}(s)$ for some $n_1 \neq n_2$, and hence $O_\phi^+(s)$ is finite, contradicting our supposition that it contains infinitely many $\gamma_i$. Now from \eqref{orbits} we have that only finitely many of the $\gamma_i$ lie in $\bigcup_{c\in R} O_\phi^+(c)$. This implies there are only finitely many $\gamma_i$, since otherwise there is some $\gamma_i$ with no ramification point of $\phi$ in $O_\phi^-(\gamma_i)$, contradicting the $p$-branch abundance of $\gamma_i$.

To prove the last assertion of the lemma, assume to the contrary that $y_i=y_j$ for some $i \neq j$. Let $n_i \geq 0$ be minimal such that $\phi^{n_i}(y_i) = \beta_i$ and let $n_j \geq 0$ be minimal such that $\phi^{n_j}(y_j) = \beta_j$. Since $y_i = y_j$, we cannot have $n_i = n_j$, for then $\beta_i = \beta_j$. Assume without loss of generality that $n_i > n_j$, and note that 
$y_i = y_j$ implies 
\[
\{\beta_j,\phi(\beta_j), \ldots\phi^{n_i-n_j}(\beta_j)\} = \{\phi^{n_j}(y_i), \phi^{n_j+1}(y_i), \ldots\phi^{n_i}(y_i)\} \subseteq \{y_i,\phi(y_i), \ldots,\phi^{n_i}(y_i)\}.
\]
 But $S \cap \{y_i,\phi(y_i),...,\phi^{n_i}(y_i)\} = \emptyset$ by condition (1). Because $n_i - n_j \geq 1$, we have $\phi(\beta_j) \not\in S$, a contradiction. 
\end{proof}

Our next preparatory lemma is an elementary lower bound on ramification indices.
\begin{lemma}\label{lem:count'}
Let $m \in \Z$ with $m \geq 2$, let $T$ be a finite subset of $\PP^1(\C)$ with $\#T = t$, and let $\phi \in \C(x)$ have degree $d \geq 2$.  Let $U = \{z \in \phi^{-1}(T) : m \nmid e_\phi(z)\}$, and put $u = \#U$. Then
\[\sum_{z \in \phi^{-1}(T)} (e_{\phi}(z)-1) \geq \left( dt - u \right) \left(\frac{m-1}{m} \right),\]
where equality holds if and only if $e_\phi(z) \in \{1, m\}$ for all $z \in \phi^{-1}(T)$.
\end{lemma}
\begin{proof}
Let $U' = \phi^{-1}(T) \setminus U$. Because $\sum_{z \in \phi^{-1}(w)} e_\phi(z) = d$ for all $w \in \PP^1(\C)$, we have 
\[
dt = \sum_{z \in \phi^{-1}(T)} e_\phi(z) = \sum_{z \in U} e_\phi(z) +  \sum_{z \in U'} e_\phi(z) =  \sum_{z \in U} (e_\phi(z) - 1) + u +  m\left(\sum_{z \in U'} (r_z - 1) + \#U' \right),
\]
from which we have that $\#U' \leq \frac{dt - u}{m}$, with equality if and only if $e_\phi(z) = 1$ for all $z \in U$ and $r_z = 1$ for all $z \in U'$. The lemma then follows from the observation that 
\[\sum_{z \in \phi^{-1}(T)} (e_{\phi}(z)-1) = dt-\#(\phi^{-1}(T)) = dt - u - \#U'.\]
\end{proof}

Let $\alpha_1$ and $\alpha_2$ be $m$-branch abundant points for $\phi$.  Put
\begin{equation}
\label{bdef}
B = \{\beta \in \PP^1(\C) : \beta \in \phi^{-1}(\{\alpha_1, \alpha_2\}) \setminus \{\alpha_1, \alpha_2\}, m \nmid e_\phi(\beta)\} 
\end{equation}
From Definition \ref{trivial}, one sees immediately that $\phi$ is $m$-trivial with respect to $\{\alpha_1, \alpha_2\}$ if and only if $B$ is empty. 
Now because $m \nmid e_\phi(\beta)$ for each $\beta \in B$, there must be some prime $p_\beta$ and some $r \geq 1$ with $p_\beta^r \mid m$ but $p_\beta^r \nmid e_\phi(\beta)$.  Because $\alpha_1$ and $\alpha_2$ are $m$-branch abundant, they are also $p_\beta^r$-branch abundant, and so by Lemma \ref{primelemma}, $\beta$ is $p_\beta$-branch abundant. We may then apply Lemma \ref{comp3} with $S = \{\alpha_1, \alpha_2\}$ to find for each $\beta \in B$ some $y_\beta \in O_\phi^-(\beta)$ with $p_\beta \mid e_{\phi}(z)$ for each $z \in \phi^{-1}(y_\beta) \setminus \{\alpha_1, \alpha_2\}$. 
We then set 
\begin{equation} \label{ydef}
Y = \{y_\beta : \beta \in B\}.
\end{equation}
By the last assertion of Lemma \ref{comp3}, $Y$ has the same number of elements as $B$.

\begin{lemma}\label{inequality}
Let $m \in \Z$ with $m \geq 2$, let $\phi \in \C(x)$ have degree $d \geq 2$, and let $\alpha_1, \alpha_2 \in \PP^1(\C)$ be distinct $m$-branch abundant points for $\phi$. Let $B$ and $Y$ be as in \eqref{bdef} and \eqref{ydef}, respectively. Put $b = \#B$ and $\ell_Y = \#(\phi^{-1}(Y) \cap \{\alpha_1, \alpha_2\})$. Then
\begin{equation} \label{bound}
b(dm - 2m + 2) + \ell_Y(m-2) \leq 4d - 4.
\end{equation}
\end{lemma}

\begin{proof}
Let $p$ be the smallest prime dividing $m$, so that $p_\beta \geq p$ for all $\beta \in B$. By Lemma \ref{comp3} we have $\#Y = \#B = b$. 
Applying Lemma \ref{lem:count'} with $T = Y$ yields
$$\sum_{z \in \phi^{-1}(Y)} (e_{\phi}(z) - 1)\geq (bd-\ell_Y) \left(\frac{p-1}{p} \right) \geq \frac{bd-\ell_Y}{2}.$$
Now let $u_i = \#\{z \in \phi^{-1}(\alpha_i) : m \nmid e_{\phi}(z)\}$ for $i \in \{1, 2\}$.
Note that $\#\phi^{-1}(\alpha_i) \leq u_i+ (d-u_i)/m$, whence
\[
\sum_{z \in \phi^{-1}(\alpha_i)} (e_{\phi}(z)-1)  = d - \#\phi^{-1}(\alpha_i) \geq d- \left(u_i + \frac{d-u_i}{m}\right).
\]
By condition (1) of Lemma \ref{comp3}, we have that $Y \cap \{\alpha_1, \alpha_2\} = \emptyset$. Hence $\#(\phi^{-1}(\{\alpha_1, \alpha_2\}) \cap \{\alpha_1, \alpha_2\}) \leq 2  - \ell_Y$, and it follows that $u_1+u_2\leq b+2-\ell_Y$.
Thus, 
\begin{align*}
2d-2 = \sum_{z\in \PP^{1}(\C)}(e_{\phi}(z)-1) & \geq  \sum_{z\in \phi^{-1}(\{\alpha_1,\alpha_2\} \cup Y)}(e_{\phi}(z)-1) \\
& \geq d-\left(u_1+\frac{d-u_1}{m} \right)+d-\left(u_2+\frac{d-u_2}{m}\right)+\frac{bd-\ell_Y}{2} \\
& = (2d-(u_1+u_2))\frac{m-1}{m}+\frac{bd-\ell_Y}{2} \\
&\geq (2d-(b+2-\ell_Y))\frac{m-1}{m}+\frac{bd-\ell_Y}{2}.
\end{align*}
Multiplying through by $2m$ and regrouping terms yields the desired inequality. 
\end{proof}

We now prove the main theorem of this section.

\begin{theorem}\label{mge5}
Let $m \in \Z$ with $m\geq 5$.  Then every rational function $\phi \in \C(x)$ with two $m$-branch abundant points $\alpha_1, \alpha_2$ in $\PP^1(\C)$ is $m$-trivial with respect to $\{\alpha_1, \alpha_2\}$.
\end{theorem}
\begin{proof}
We use the notation of Lemma \ref{inequality}, and assume $b \geq 1$ in order to derive a contradiction.  
If $\ell_Y \geq 1$, then applying Lemma \ref{inequality} with $m \geq 5$ gives $b(5d - 8) + 3 \leq 4d - 4$. But $b \geq 1$, so this yields $5d - 5 \leq 4d - 4$, which is impossible because $d \geq 2$.

If $\ell_Y = 0$, then Lemma \ref{inequality} and $b \geq 1$ give $5d - 8 \leq 4d - 4$, which implies $d \leq 4$. 
Hence $m > d$, implying $\phi^{-1}(\{\alpha_1, \alpha_2\}) \subseteq B \cup \{\alpha_1, \alpha_2\}$, and therefore $\#\phi^{-1}(\{\alpha_1, \alpha_2\}) \leq b + 2$. 
Moreover, since $\ell_Y=0$, for each $y_\beta \in Y$ all elements of $\phi^{-1}(y_\beta)$ must have ramification index divisible by $p_\beta$, and in particular every element of $\phi^{-1}(Y)$ has ramification index greater than 1. When $d = 3$, this implies $e_\phi(z) = 3$ for all $z \in \phi^{-1}(Y)$, while for $d = 4$ we have $e_\phi(z) \geq 2$ for all $z \in \phi^{-1}(Y)$. In either case, $\sum_{z\in \phi^{-1}(Y)}(e_{\phi}(z)-1) \geq 2b$. 
Hence for $d \in \{3, 4\}$ we obtain
 \begin{equation} \label{estimate}
 \sum_{z\in \phi^{-1}(Y \cup \{\alpha_1,\alpha_2\})}(e_{\phi}(z)-1) \geq (2d-(b+2))+(2b)= 2d - 2 + b. 
 \end{equation}
Because $b \geq 1$ we have a contradiction to the Riemann-Hurwitz formula. 
When $d = 2$, we have only $\sum_{z\in \phi^{-1}(Y)}(e_{\phi}(z)-1) \geq b,$ and so $\sum_{z\in \phi^{-1}(Y \cup \{\alpha_1,\alpha_2\})}(e_{\phi}(z)-1) \geq (2d - (b+2)) + b = 2d - 2.$ Hence the inequality $\#\phi^{-1}(\{\alpha_1, \alpha_2\}) \geq b + 2$ is in fact an equality, and it follows that $\phi^{-1}(\{\alpha_1, \alpha_2\}) = B \cup \{\alpha_1, \alpha_2\}$. In particular, $\phi(\{\alpha_1, \alpha_2\}) \subseteq \{\alpha_1, \alpha_2\}$, implying that $O_\phi^+(\{\alpha_1, \alpha_2\}) \subseteq \{\alpha_1, \alpha_2\}$. Thus no element of $B$ can be periodic under $\phi$, for otherwise $B \cap O_\phi^+(\{\alpha_1, \alpha_2\}) \neq \emptyset$, contradicting the fact that by definition $B \cap \{\alpha_1, \alpha_2\} = \emptyset$.  Now let $\beta \in B$, and for $n \geq 1$ let $\gamma_n \in \phi^{-n}(\beta)$. Because $\beta$ is not periodic under $\phi$, we must have that $\gamma_n, \phi(\gamma_n), \ldots, \phi^n(\gamma_n)$ are all distinct. But 
\[ e_{\phi^{n}}(\gamma_n)=\prod_{i=0}^{n-1}e_\phi(\phi^{i}(\gamma_n)),\]
and there can be at most two $i$ with $e_\phi(\phi^{i}(\gamma_n)) = 2$, with the rest having $e_\phi(\phi^{i}(\gamma_n)) = 1$. It follows that $e_{\phi^{n}}(\gamma_n) \leq 4$. This holds for arbitrary $n$ and $\gamma_n$, and thus $\beta$ cannot be $m$-branch abundant, because $m \geq 5$. This contradiction completes the proof of the theorem.
\end{proof}

\section{preimage trees of $p$-branch abundant points} \label{prime}

In this section we study $m$-fold ramification among preimages of an $m$-branch abundant point. 
\begin{definition}
Fix $m \in \Z$ with $m \geq 2$, $\phi \in \C(x)$, and $\alpha \in \PP^1(\C)$. Given $z \in \PP^1(\C)$, denote by $r_\phi(z)$ the unordered tuple whose entries are $e_\phi(y)$ as $y$ varies over $\phi^{-1}(z)$. For $n \geq 0$, let $S_n$ be the set of $z \in \phi^{-n}(\alpha)$ with $m \nmid e_{\phi^n}(z)$. 
The \textbf{$\boldsymbol m$-ramification structure of $O^-_\phi(\alpha)$} is 
$$
\bigsqcup_{n \geq 0} \{(z, r_{\phi}(z)) : z \in S_n\}.
$$
\end{definition}
For example, let $T_6$ be the degree-6 monic Chebyshev polynomial, let $m = 2, \phi(x) = T_6(x+2) - 2$, and $\alpha = 0$. Then $S_0 = \{0\}$, $S_n = \{-4, 0\}$ for all $n \geq 1$, and the $2$-ramification structure of $O_\phi^-(0)$ is 
\begin{equation} \label{t6example}
\{(0, (1,1, 2, 2))\} \sqcup \{(-4, (2, 2, 2)), (0, (1,1, 2, 2))\} \sqcup \{(-4, (2, 2, 2)), (0, (1,1, 2, 2))\} \sqcup \cdots
\end{equation}

It is often convenient to represent $m$-ramification structures pictorially. We do this by constructing a diagram whose $n$th row consists of the elements of $S_{n}$, and where a line between $\gamma \in S_{n+1}$ and $\beta \in S_n$ indicates that $\phi(\gamma) = \beta$. We label such a line with $e_\phi(\gamma)$ in the case where $e_\phi(\gamma) > 1$.  To eliminate clutter, we indicate with a double line labeled by $n$ (resp. $n^*$) a set of points each of which has ramification index divisible by $n$ (resp. exactly $n$). To further simplify our diagrams, we omit repetition when it does not add novel information, such as when $S_{n+1}$ is identical to $S_n$. For example, a diagram representing the $2$-ramification structure in \eqref{t6example} is:
\begin{center}
\begin{tikzpicture}
	\Tree 
		[.\node (alpha) {$0$};
			\edge[double]; [.\phantom{$\alpha$} ] 
			[.$0$ ]
			[.\node (beta) {$-4$};
				\edge[double]; [.\phantom{$-$}
				]
			]
		]
	\node [below left=-.1cm of alpha,xshift=-.05cm] {\scriptsize $2^*$};
	\node [below=-.0cm of beta,xshift=.25cm] {\scriptsize $2^*$};
\end{tikzpicture}
\vspace{-0.2 in}
\end{center}
If we replace the two occurrences of $2^*$ by $2$, then the resulting diagram still describes the $2$-ramification structure in \eqref{t6example}, though it also describes others, e.g.,
$$\{(0, (1,1, 2, 4, 6))\} \sqcup \{(-4, (4, 4, 6)), (0, (1,1, 2, 4, 6))\} \sqcup \{(-4, (4, 4, 6)), (0, (1,1, 2, 4, 6))\} \sqcup \cdots$$

The main goal of this section is to study maps $\phi \in \C(x)$ for which $0$ and $\infty$ are $m$-branch abundant with $m \in \{2, 3\}$, which we do in Theorems \ref{3class} and \ref{2class}. Our main tool is a classification of the $p$-ramification structures of $O_\phi^-(\alpha)$, where $p$ is prime and $\alpha$ is a $p$-branch abundant point for $\phi$. This is done in Theorems \ref{class1} and \ref{class2}.

\begin{lemma}\label{lem:ndiv} 
Let $\phi \in \C(x)$ have degree $d \geq 2$, let $p$ be a prime with $p \nmid d$, and suppose that $\alpha \in \PP^1(\C)$ is $p$-branch abundant for $\phi$. Then $\alpha$ is periodic under $\phi$ and there is exactly one $\beta \in \phi^{-1}(\alpha)$ with $p\nmid e_{\phi}(\beta)$. Moreover, $\beta$ must be $p$-branch abundant for $\phi$, and $\beta$ must lie in $O_\phi^+(\alpha)$.
\end{lemma}
\begin{proof}
Because $p\nmid d$, there must be at least one $\beta \in \phi^{-1}(\alpha)$ with $p\nmid e_{\phi}(\beta)$. If $\beta = \alpha$, then evidently $\alpha$ is periodic under $\phi$ and $\beta \in O_\phi^+(\alpha)$. Assume $\beta \in \phi^{-1}(\alpha) \setminus \{\alpha\}$. By Lemma \ref{primelemma} we have that $\beta$ is $p$-branch abundant for $\phi$. Applying Lemma \ref{comp3} with $S = \{\alpha\}$, there exists $y \in O_\phi^-(\beta)$ with $p \mid e_\phi(z)$ for each $z \in \phi^{-1}(y) \setminus \{\alpha\}$. 
If $\alpha \not\in \phi^{-1}(y)$, then from $d=\sum_{z \in \phi^{-1}(y)}{e_{\phi}(z)}$ we have $p \mid d$, contrary to assumption. Hence $\phi(\alpha) = y$, and so $\alpha$ is periodic under $\phi$ and $\beta \in O_\phi^+(\alpha)$. 

It remains to show that $\beta$ is the unique element of $\phi^{-1}(\alpha)$ with ramification index not divisible by $p$. If $\beta'$ is another such element, then by the previous paragraph we have $\beta' \in O_\phi^+(\alpha)$. Thus both $\beta$ and $\beta'$ lie in the cycle $C$ to which $\alpha$ belongs. But the action of $\phi$ on $C$ is one-to-one, so $\phi(\beta) = \alpha = \phi(\beta')$ implies $\beta = \beta'$. 
\end{proof}

\begin{theorem} \label{class1}
Let $\phi \in \C(x)$ have degree $d \geq 2$, let $p$ be a prime with $p \nmid d$, and suppose that $\alpha \in \PP^1(\C)$ is $p$-branch abundant for $\phi$. Then the $p$-ramification structure for $O_\phi^-(\alpha)$ is one of the following, where $k_1, k_2,$ and $k_3$ are positive integers not divisible by $p$, and points named with distinct letters within a given diagram are distinct:

\vspace{0.1 in}
\begin{center}
\begin{tabular}{| l | l | l | l | l |}
\hline
1. & 2. & 3.& 4.&5.\\
\begin{tikzpicture}
	\Tree
		[.\node (alpha) {$\alpha$};
			\edge[double]; [.\phantom{$\alpha$} ] 
			[.$\alpha$ 
				\edge[draw=none]; [.{} 
					\edge[draw=none]; [.{} 
						\edge[draw=none]; [.{} ]
					]
				]
			]
		]
	\node [below left=.00cm of alpha, xshift=.2cm] {\scriptsize $p$};
	\node [below right=-.1cm of alpha, xshift=-.17cm] {\scriptsize $k$};
\end{tikzpicture}
&
\begin{tikzpicture}
	\Tree 
		[.\node (alpha) {$\alpha$};
			\edge[double]; [.\phantom{$\beta$} ] 
			[.\node (beta) {$\beta$};
				\edge[double]; [.\phantom{$\alpha$} ] 
				[.$\alpha$
					 \edge[draw=none]; [.{} 
					 	\edge[draw=none]; [.{} ]
					]
				]
			]
		]
	\node [below left=-.1cm of alpha, xshift=.05cm] {\scriptsize $p$};
	\node [below right=-.1cm of alpha, xshift=-.00cm] {\scriptsize $k_1$};
	\node [below left=.00cm of beta, xshift=.2cm] {\scriptsize $p$};
	\node [below right=-.1cm of beta, xshift=-.1cm] {\scriptsize $k_2$};
\end{tikzpicture}
&
\begin{tikzpicture}
	\Tree 
		[.\node (alpha) {$\alpha$};
			\edge[double]; [.\phantom{$\beta$} ] 
			[.\node (beta) {$\beta$};
				\edge[double]; [.\phantom{$\gamma$} ] 
				[.\node (gamma) {$\gamma$};
					\edge[double]; [.\phantom{$\alpha$} ] 
					[.$\alpha$ 
						\edge[draw=none]; [.{} ]
					]
				]
			]
		]
	\node [below left=-.15cm of alpha, xshift=.00cm] {\scriptsize $2$};
	\node [below right=-.1cm of alpha, xshift=-.00cm] {\scriptsize $k_1$};
	\node [below left=-.15cm of beta, xshift=.00cm] {\scriptsize $2$};
	\node [below right=-.1cm of beta, xshift=-.00cm] {\scriptsize $k_2$};
	\node [below left=-.1cm of gamma, xshift=.1cm] {\scriptsize $2$};
	\node [below right=-.1cm of gamma, xshift=-.1cm] {\scriptsize $k_3$};
\end{tikzpicture}
&
\begin{tikzpicture}
	\Tree 
		[.\node (alpha) {$\alpha$};
			\edge[double] node[left, pos=.2]{\scriptsize $3^*$}; [.\phantom{$\beta$} ] 
			[.\node (beta) {$\beta$};
				\edge[double] node[left, pos=.2] {\scriptsize $3^*$}; [.\phantom{$\gamma$} ] 
				[.\node (gamma) {$\gamma$};
					\edge[double] node[left, pos=.2]{\scriptsize $3^*$}; [.\phantom{$\alpha$} ] 
					[.$\alpha$ 
						\edge[draw=none]; [.{} ]
					]
				]
			]
		]
\end{tikzpicture}
&
\begin{tikzpicture}
	\Tree 
		[.\node (alpha) {$\alpha$};
			\edge[double] node[left, pos=.2]{\scriptsize $2^*$}; [.\phantom{$\beta$} ]
			[.\node (alpha) {$\beta$};
				\edge[double] node[left, pos=.2]{\scriptsize $2^*$}; [.\phantom{$\gamma$} ]
				[.\node (alpha) {$\gamma$};
					\edge[double] node[left, pos=.2]{\scriptsize $2^*$}; [.\phantom{$\delta$} ]
					[.$\delta$ 
						\edge[double] node[left, pos=.3]{\scriptsize $2^*$}; [.\phantom{$\alpha$} ]
						[.$\alpha$ ]
					]
				]
			]
		]
\end{tikzpicture}
\\
&&
$p=2$
&
$p=3$
&
$p=2$
\\\hline
\end{tabular}
\end{center}
\vspace{0.1 in}
\end{theorem}

\begin{proof}
Put $\alpha_1 = \alpha$, and note that by Lemma \ref{lem:ndiv} there is a unique $\alpha_{2} \in \phi^{-1}(\alpha_1)$ with $p \nmid e_\phi(\alpha_{i+1})$ and $\alpha_{2} \in O_\phi^+(\alpha_1)$. By Lemma \ref{primelemma} we have that $\alpha_2$ is $p$-branch abundant for $\phi$, and so we may apply Lemma \ref{lem:ndiv} to $\alpha_2$. Continuing in this fashion, we obtain a sequence $(\alpha_i)_{i \geq 1}$ in $\PP^1(\C)$ of $p$-branch abundant points for $\phi$ that satisfy $\phi(\alpha_{i+1}) = \alpha_i$ and $\alpha_{i+1} \in O_\phi^+(\alpha_{i})$ for all $i \geq 1$. The latter condition implies $O_\phi^+(\alpha_{i+1}) \subseteq O_\phi^+(\alpha_i)$ for all $i \geq 1$, and so $O_\phi^+(\alpha_{i+1}) \subseteq O_\phi^+(\alpha_1)$, implying $\alpha_{i+1} \in O_\phi^+(\alpha_1)$. But  Lemma \ref{lem:ndiv} shows that $\alpha_1$ is periodic under $\phi$. Hence $\alpha_i = \alpha_j$ for some $i > j$, which implies that $\alpha_1 = \alpha_{i - j +1}$. Let $n > 0$ be minimal such that $\alpha_1 = \alpha_{n+1}$.  Note that $n = 1$ gives $p$-ramification structure (1), while $n = 2$ gives $p$-ramification structure (2).

Assume that $n \geq 3$. Applying Lemma \ref{lem:count'} with $T = \{\alpha_i\}$ and summing over $i$ yields
\begin{align}
\sum_{i=1}^{n}{\sum_{z\in\phi^{-1}(\alpha_i)}(e_\phi(z)-1)} & \geq n(d-1)\frac{(p-1)}{p} \geq 3(d-1)\frac{(p-1)}{p}. \label{9bound}
\end{align}
If $p > 3$, we obtain a contradiction to Riemann-Hurwitz. If $p = 3$, then we obtain a similar contradiction unless both the inequalities in \eqref{9bound} are equalities. This holds only when $n = 3$ and $e_\phi(z) \in \{1, 3\}$ for all $z \in \phi^{-1}(\alpha_i)$, the latter by Lemma \ref{lem:count'}. This gives $p$-ramification structure (4). If $p=2$, then 
$n(d-1)(p-1)/p = (n/2)(d-1)$, and \eqref{9bound} contradicts Riemann-Hurwitz unless $n \leq 4$. The case $n=3$ gives $p$-ramification structure $(3)$. If $n = 4$, then the first inequality in \eqref{9bound} must be an equality, and hence we have equality in Lemma \ref{lem:count'} with $T = \{\alpha_i\}$. The latter happens if and only if $e_\phi(z) \in \{1,2\}$ for all $z \in \phi^{-1}(\alpha_i)$, which gives $p$-ramification structure (5). 
\end{proof}

We now move to the more complicated case where $p \mid d$. The following lemma is of central importance both in this section and in subsequent sections.  
\begin{lemma} \label{atmost4}
Let $\phi \in \C(x)$ have degree $d \geq 2$. If $p > 3$ is prime, then $\phi$ has at most two $p$-branch abundant points in $\PP^1(\C)$. Moreover, $\phi$ has at most three $3$-branch abundant points in $\PP^1(\C)$, and at most four $2$-branch abundant points in $\PP^1(\C)$.
If $\phi$ possesses a set $V$ of three $3$-branch abundant points (resp. four $2$-branch abundant points), then all ramification points of $\phi$ lie in $\phi^{-1}(V),$ and
$e_\phi(z) \in \{1, 3\}$ (resp. $e_\phi(z) \in \{1, 2\}$) for all $z \in \PP^1(\C)$.
\end{lemma}

\begin{proof}
Let $p$ be prime, and let $V = V_0 = \{\alpha_1, \ldots, \alpha_k\} \subset \PP^1(\C)$ be a set of distinct $p$-branch abundant points for $\phi$. For $i \geq 1$, put $V_i = \{z \in \phi^{-1}(V_{i-1}) : p \nmid e_\phi(z)\}$. Observe that $V_i$ consists of all $z \in \phi^{-i}(V_0)$ with $p \nmid e_{\phi^i}(z)$, and in particular, $\#V_i = \sum_{n = 1}^k \rho_i(\alpha_n)$ (notation as in Definition \ref{rhodef}). By the $p$-branch abundance of the $\alpha_i$, we have that $(\#V_i)_{i \geq 0}$ is bounded. Hence the sequence cannot be strictly increasing, and so there is $j \geq 1$ with $\#V_{j} \leq \#V_{j-1} $. Assume that $j$ is minimal with this property. Because $\#V_0 = k$, the minimality of $j$ ensures that $\#V_{j-1} \geq k$. Apply Lemma \ref{lem:count'} with $T = V_{j-1}$ to get
\begin{equation} \label{rameqn}
\sum_{z \in \phi^{-1}(V_{j-1})} (e_\phi(z) - 1) \geq ((\#V_{j-1})d - \#V_j)\frac{p-1}{p} \geq (\#V_{j-1})(d-1)\frac{p-1}{p} \geq k(d-1)\frac{p-1}{p}.
\end{equation} 
If $p > 3$, then Riemann-Hurwitz implies $k \leq 2$. If $p = 3$ (resp. $p  = 2$), then Riemann-Hurwitz implies $k \leq 3$ (resp. $k \leq 4$). If $p = k = 3$ or $p = 2, k = 4$, then by Riemann-Hurwitz again we have equality throughout \eqref{rameqn}. In particular, we have $\#V_{j-1} = k$, and the minimality of $j$ then gives $j = 1$.  Equality in \eqref{rameqn} also implies equality in Lemma \ref{lem:count'} with $T = V_{j-1} = V$, and thus $e_\phi(z) \in \{1, p\}$ for all $z \in \phi^{-1}(V)$. Finally, \eqref{rameqn} gives $\sum_{z \in \phi^{-1}(V)} (e_\phi(z) - 1) = 2d - 2$, implying that all ramification points for $\phi$ lie in $\phi^{-1}(V)$, and hence $e_\phi(z) \in \{1, p\}$ for all $z \in \PP^1(\C)$.
\end{proof}

\begin{theorem}\label{class2}
Let $\phi \in \C(x)$ have degree $d \geq 2$, let $p$ be a prime with $p \mid d$, and suppose that $\alpha \in \PP^1(\C)$ is $p$-branch abundant for $\phi$. Then the $p$-ramification structure for $O_\phi^-(\alpha)$ is one of the following, where points named with distinct letters within a given diagram are distinct:

\vspace{0.1 in}
\begin{center}
\begin{tabular}{| l | l | l | l |}
\hline
6. & 7. & 8. & 9. \\
\multicolumn{1}{| c |}{
\begin{tikzpicture}
	\Tree
		[.\node (alpha) {$\alpha$};
			\edge[double]; [.\phantom{$\alpha$}
				\edge[draw=none]; [.{}
					\edge[draw=none]; [.{} ]
				]
			]
		]
	\node [below=.05cm of alpha,xshift=-.15cm] {\scriptsize $p$};
\end{tikzpicture}
}
&
\multicolumn{1}{| c |}{
\begin{tikzpicture}
	\Tree 
		[.\node (alpha) {$\alpha$};
			\edge[double]; [.\phantom{$\alpha$} ] 
			[.$\alpha$ ]
			[.\node (beta) {$\beta$};
				\edge[double]; [.\phantom{$\beta$}
					\edge[draw=none]; [.{} ]
				]
			]
		]
	\node [below left=-.1cm of alpha,xshift=.05cm] {\scriptsize $p$};
	\node [below=.15cm of alpha, xshift=-.12cm] {\scriptsize $a$};
	\node [below right=-.15cm of alpha, xshift=.08cm] {\scriptsize $b$};
	\node [below=-.05cm of beta,xshift=.15cm] {\scriptsize $p$};
\end{tikzpicture}
}
&
\multicolumn{1}{| c |}{
\begin{tikzpicture}
	\Tree 
		[.\node (alpha) {$\alpha$};
			\edge[double]; [.\phantom{$\beta$} ]
			[.\node (beta1) {$\beta_1$};
				\edge[double]; [.\phantom{$\beta$} ]
			]
			[.\node (beta2) {$\beta_2$};
				\edge[double]; [.\phantom{$\beta$}
					\edge[draw=none]; [.{} ]
				]
			]
		]
	\node [below left=-.15cm of alpha,xshift=-.1cm] {\scriptsize $2$};
	\node [below=-.05cm of beta1,xshift=-.12cm] {\scriptsize $2$};
	\node [below=-.05cm of beta2,xshift=-.12cm] {\scriptsize $2$};
	\node [below=.08cm of alpha, xshift=-.15cm] {\scriptsize $a$};
	\node [below right=-.15cm of alpha, xshift=.12cm] {\scriptsize $b$};
\end{tikzpicture}
}
&
\begin{tikzpicture}
	\Tree
		[.\node (alpha) {$\alpha$};
			\edge [double]; [.\phantom{$\beta$} ]
			[.$\alpha$ ]
			[.\node (1) {$\beta_1$};
				\edge[double]; [.\phantom{$\beta$} ]
			]
			[.\node (2) {$\beta_2$};
				\edge[double]; [.\phantom{$\beta$}
					\edge[draw=none]; [.{} ]
				]
			]
		]
	\node [below left=-.25cm of alpha, xshift=-.25cm] {\scriptsize $3^*$};
	\node [below=-.05cm of 1,xshift=-.12cm] {\scriptsize $3^*$};
	\node [below=-.05cm of 2,xshift=-.12cm] {\scriptsize $3^*$};
\end{tikzpicture}
\\
&
	$p \mid (a+b)$
&
$\text{$p=2$; $a, b$ odd}$
&
$p=3$
\\\hline
10. & 11. & 12. &\\
\begin{tikzpicture}
	\Tree 
		[.\node (alpha) {$\alpha$};
			\edge[double]; [.\phantom{$\beta$} ] 
			[.$\alpha$ ]
			[.\node (beta) {$\beta$};
				\edge[double]; [.\phantom{$\gamma$} ]
				[.\node (gamma1) {$\gamma_1$};
					\edge[double]; [.\phantom{$\gamma$} ]
				]
				[.\node (gamma2) {$\gamma_2$};
					\edge[double]; [.\phantom{$\gamma$} ]
				]
			]
		]
	\node [below left=-.2cm of alpha,xshift=-.2cm] {\scriptsize $2^*$};
	\node [below left=-.15cm of beta,xshift=-.1cm] {\scriptsize $2^*$};
	\node [below=.00cm of gamma1,xshift=-.17cm] {\scriptsize $2^*$};
	\node [below=.00cm of gamma2,xshift=-.17cm] {\scriptsize $2^*$};
\end{tikzpicture}
&
\begin{tikzpicture}
	\Tree 
		[.\node (alpha) {$\alpha$};
			\edge[double]; [.\phantom{$\beta$} ]
			[.\node (beta1) {$\beta_1$};
				\edge[double]; [.\phantom{$\beta$} ]
			]
			[.\node (beta2) {$\beta_2$};
				\edge[double]; [.\phantom{$\alpha$} ]
				[.$\alpha$ ]
				[.\node (gamma) {$\gamma$};
					\edge[double]; [.\phantom{$\gamma$} ]
				]
			]
		]
	\node [below left=-.25cm of alpha,xshift=-.25cm] {\scriptsize $2^*$};
	\node [below=-.05cm of beta1,xshift=-.17cm] {\scriptsize $2^*$};
	\node [below left=-.2cm of beta2,xshift=.05cm] {\scriptsize $2^*$};
	\node [below=.00cm of gamma,xshift=-.17cm] {\scriptsize $2^*$};
\end{tikzpicture}
&
\begin{tikzpicture}
	\Tree 
		[.\node (alpha) {$\alpha$};
			\edge[double]; [.\phantom{$\alpha$} ]
			[.$\alpha$ ]
			[.\node (beta1) {$\beta_1$};
				\edge[double]; [.\phantom{$\beta$} ]
			]
			[.\node (beta2) {$\beta_2$};
				\edge[double]; [.\phantom{$\beta$} ]
			]
			[.\node (beta3) {$\beta_3$};
				\edge[double]; [.\phantom{$\beta$} 
					\edge[draw=none]; [.{} ]
				]
			]
		]
	\node [below left=-.1cm of alpha, xshift=-.4cm] {\scriptsize $2^*$};
	\node [below=-.05cm of beta1,xshift=-.17cm] {\scriptsize $2^*$};
	\node [below=-.05cm of beta2,xshift=-.17cm] {\scriptsize $2^*$};
	\node [below=-.05cm of beta3,xshift=-.17cm] {\scriptsize $2^*$};
\end{tikzpicture}
&
\\
$p=2$
&
$p=2$
&
$p=2$
&
\\\hline
\end{tabular}
\end{center}
\vspace{0.1 in}

\end{theorem}

\begin{proof}
For any $z\in \PP^1(\C)$, we define 
\[
u(z) = \#\{\beta\in \phi^{-1}(z) \: : \: p\nmid e_\phi(\beta)\} \qquad \text{and} \qquad u_0(z)=\#\{\beta\in \phi^{-1}(z) \setminus \{z\} \: : \: p\nmid e_\phi(\beta)\}.
\]
Note that $u(z) = u_0(z)$ or $u(z) = u_0(z) + 1$, with the latter holding if and only if $\phi(z) = z$ and $p \nmid e_\phi(z)$. 
We frequently use the observation that $p \mid d$ implies $u(z) \neq 1$ for all $z \in \PP^1(\C)$. For example, if $u_0(\alpha) = 0$, then $u(\alpha) \leq 1$, and so because $p \mid d$ we have $u(\alpha) = 0$, which gives $p$-ramification structure (6).

\textbf{Case 1a:} Let $p\geq 3$ and $u_0(\alpha) \geq 2$. Because $\alpha$ is $p$-branch abundant, Lemma \ref{primelemma} yields the same conclusion for $\beta \in \phi^{-1}(\alpha) \setminus \{\alpha\}$ with $p \nmid e_\phi(\beta)$. If $u_0(\alpha) \geq 3$, we thus have a set of four distinct $p$-branch abundant points for $\phi$, contradicting Lemma \ref{atmost4}.  Thus $u_0(\alpha) = 2$, and we let $\beta_1,\beta_2$ be the two elements of $\phi^{-1}(\alpha) \setminus \{\alpha\}$ with ramification index not divisible by $p$. Then $V = \{\alpha, \beta_1, \beta_2\}$ is a set of three $p$-branch abundant points for $\phi$, and by Lemma \ref{atmost4} this implies $p = 3$ and $e_\phi(z) \in \{1, 3\}$ for all $z \in \phi^{-1}(V)$.  In particular we have $e_\phi(\beta_1) = e_\phi(\beta_2) = 1$. Because $3 \mid d$, we must have $\alpha \in \phi^{-1}(\alpha)$ and $3 \nmid e_\phi(\alpha)$, whence $e_\phi(\alpha) = 1$. Now from Lemma \ref{atmost4}, $V$ contains all $3$-branch abundant points for $\phi$, and it follows from Lemma \ref{primelemma} that $e_\phi(z) = 3$ for all $z \in \phi^{-1}(V) \setminus V$. But $V \cap \phi^{-1}(\{\beta_1, \beta_2\}) = \emptyset$, for otherwise applying $\phi$ gives $\alpha \in \{\beta_1, \beta_2\}$. Hence $e_\phi(z) = 3$ for all $z \in \phi^{-1}(\{\beta_1, \beta_2\})$, giving $3$-ramification structure (9). 

\textbf{Case 1b:} Let $p\geq 3$ and $u_0(\alpha)=1$. Let $\beta$ be the unique element of $\phi^{-1}(\alpha)\setminus\{\alpha\}$ with $p\nmid e_\phi(\beta)$. Because $p \mid d$ we have $u(\alpha)=2$, whence $\phi(\alpha)=\alpha$.  From Lemma \ref{primelemma} we have that $\beta$ is $p$-branch abundant for $\phi$.  By our work in Case 1a, $u_0(\beta)=2$ implies that $\phi(\beta)=\beta$, contradicting $\alpha \neq \beta$.  If $u_0(\beta)=1$ then because $p \mid d$ we have $u(\beta) = 2 > u_0(\beta)$, and so $\phi(\beta)=\beta$, again a contradiction.  Thus we have $u_0(\beta)=0$, and therefore $u(\beta) = 0$, giving $p$-ramification structure (7).  

\textbf{Case 2a:} Let $p=2$ and $u_0(\alpha) \geq 3$. Arguing similarly to Case 1a, we must have $u_0(\alpha) = 3$. Let $\beta_1,\beta_2, \beta_3$ be the three elements of $\phi^{-1}(\alpha) \setminus \{\alpha\}$ with odd ramification index. Then $V = \{\alpha, \beta_1, \beta_2, \beta_3\}$ is a set of four $2$-branch abundant points for $\phi$, and by Lemma \ref{atmost4} this implies $e_\phi(z) \in \{1, 2\}$ for all $z \in \phi^{-1}(V)$. Because $d$ is even, we must have $\alpha \in \phi^{-1}(\alpha)$ and $2 \nmid e_\phi(\alpha)$, whence $e_\phi(\alpha) = 1$. Lemma \ref{atmost4} shows that $V$ contains all $2$-branch abundant points for $\phi$, and it follows from Lemma \ref{primelemma} that $e_\phi(z) = 2$ for all $z \in \phi^{-1}(V) \setminus V$. But $V \cap \phi^{-1}(\{\beta_1, \beta_2, \beta_3\}) = \emptyset$, for otherwise applying $\phi$ gives $\alpha \in \{\beta_1, \beta_2, \beta_3\}$. Hence $e_\phi(z) = 2$ for all $z \in \phi^{-1}(\{\beta_1, \beta_2, \beta_3\})$, giving $2$-ramification structure (12).

\textbf{Case 2b:}  
Let $p=2$ and $u_0(\alpha)=2$. 
The latter implies $u(\alpha) \in \{2,3\}$, but $u(\alpha)$ must be even because $d$ is, giving $u(\alpha)=2$ and hence $\alpha \not\in \phi^{-1}(\alpha)$. Let  
$\beta_1,\beta_2$ be the two elements of $\phi^{-1}(\alpha) \setminus \{\alpha\}$ with odd ramification index. Note that $\beta_1$ and $\beta_2$ are both 2-branch abundant by Lemma \ref{primelemma}, and so Lemma \ref{atmost4} implies $u_0(\beta_i) \leq 3$ for $i = 1, 2$. Moreover, neither of the $\beta_i$ can have $u_0(\beta_i)=3$, because then $\phi(\beta_i)=\beta_i$ by Case 2a, giving a contradiction.  

Suppose that $u_0(\beta_i) = 2$ for some $i$ (say without loss of generality $i =2$), and let $z_1, z_2$ be the elements of $\phi^{-1}(\beta_2)\setminus \{\beta_2\}$ with odd ramification index. Then $V = \{\alpha, \beta_1, \beta_2, z_1, z_2\}$ is a set of $2$-branch abundant points for $\phi$, and Lemma \ref{atmost4} gives $\#V \leq 4$. But $\#\{\beta_1, \beta_2, z_1, z_2\} = 4$ and $\alpha \not\in \{\beta_1, \beta_2\}$ by construction, whence $\alpha \in \{z_1, z_2\}$. Without loss of generality say $\alpha = z_1$. Because $\#V = 4$, Lemma \ref{atmost4} shows that $e_\phi(z) \in \{1, 2\}$ for all $z \in \phi^{-1}(V)$. Lemma \ref{atmost4} also shows that $V$ contains all $2$-branch abundant points for $\phi$, and it follows from Lemma \ref{primelemma} that $e_\phi(z) = 2$ for all $z \in \phi^{-1}(V) \setminus V$. Note that $\phi(V) = \{\alpha, \beta_2\}$, and so if $V \cap \phi^{-1}(\{\beta_1, z_2\}) \neq \emptyset$, then applying $\phi$ gives $\{\beta_1, z_2\} \cap \{\alpha, \beta_2\} \neq \emptyset$, which is impossible. Hence $e_\phi(z) = 2$ for all $z \in \phi^{-1}(\{\beta_1, z_2\})$, which gives 2-ramification structure (11). 

Suppose that $u_0(\beta_i) \leq 1$ for $i = 1, 2$. Because $\beta_i \neq \alpha$, we have $\phi(\beta_i) \neq \beta_i$, and thus $u(\beta_i) = u_0(\beta_i)$ for $i = 1, 2$. Because $2 \mid d$ we cannot have $u(\beta_i) = 1$, which proves that $u(\beta_1)=u(\beta_2)=0$. This gives 2-ramification structure (8). 

\textbf{Case 2c:} Let $p=2$ and $u_0(\alpha) = 1$. Then there exists a unique $\beta \in \phi^{-1}(\alpha)\setminus \{\alpha\}$ with $e_\phi(\beta)$ odd. Because $u(\alpha) \neq 1$, we must have $u(\alpha) > u_0(\alpha)$, implying that $\phi(\alpha) = \alpha$ and $e_\phi(\alpha)$ is odd. Note that $u_0(\beta) \leq 3$ by Lemma \ref{atmost4}. If $u_0(\beta) = 3$, then by Case 2a we have $\phi(\beta) = \beta$, contradicting $\beta \neq \alpha$. We also cannot have $u_0(\beta) = 1$, for then $u(\beta) = 2$, and so again $\phi(\beta) = \beta$.  If $u_0(\beta) = 0$, we have $2$-ramification structure (7).

Suppose then that $u_0(\beta) = 2$, and let $\gamma_1,\gamma_2$ be the two elements of $\phi^{-1}(\beta) \setminus \{\beta\}$ with odd ramification index. Note that $\alpha \not\in \{\gamma_1, \gamma_2\}$, for otherwise $\phi(\alpha) = \alpha$ gives the contradiction $\beta = \alpha$. Thus $V = \{\alpha, \beta, \gamma_1, \gamma_2\}$ is a set of four 2-branch abundant points for $\phi$, and Lemma \ref{atmost4} shows that $e_\phi(z) \in \{1, 2\}$ for all $z \in \phi^{-1}(V)$. Lemma \ref{atmost4} also shows that $V$ contains all $2$-branch abundant points for $\phi$, and it follows from Lemma \ref{primelemma} that $e_\phi(z) = 2$ for all $z \in \phi^{-1}(V) \setminus V$. Note that $\phi(V) = \{\alpha, \beta\}$, and so if $V \cap \phi^{-1}(\{\gamma_1, \gamma_2\}) \neq \emptyset$, then applying $\phi$ gives $\{\gamma_1, \gamma_2\} \cap \{\alpha, \beta\} \neq \emptyset$, which is impossible. Hence $e_\phi(z) = 2$ for all $z \in \phi^{-1}(\{\gamma_1, \gamma_2\})$, which gives 2-ramification structure (10). 
\end{proof}

If $\alpha \in \PP^1(\C)$ is $p$-branch abundant for $\phi \in \C(x)$, define 
$$
Ab(\alpha) = \bigcup_{n \geq 1} \{z \in \phi^{-n}(\alpha) : p \nmid e_{\phi^n}(z)\},
$$
and note that by Lemma \ref{primelemma}, $Ab(\alpha)$ consists of $p$-branch abundant points for $\phi$. In the notation of Theorems \ref{class1} and \ref{class2}, the named points in each $p$-ramification structure comprise $Ab(\alpha)$. Note that it follows from Lemma \ref{primelemma} that if $\alpha_1$ and $\alpha_2$ are $p$-branch abundant points for $\phi$ and $\alpha_2 \in Ab(\alpha_1)$, then $Ab(\alpha_2) \subseteq Ab(\alpha_1)$. If $\alpha_1, \ldots, \alpha_n$ are $p$-branch abundant points for $\phi$, we write $Ab(\alpha_1, \ldots, \alpha_n)$ for $\bigcup_{i=1}^n Ab(\alpha_i)$.

\begin{theorem} \label{3class} 
Let $\phi \in \C(x)$ have degree $d \geq 2$, and assume that $A = \{\alpha_1, \alpha_2\} \subset \PP^1(\C)$ is a set of distinct $3$-branch abundant points for $\phi$. Suppose that $\phi$ is not $3$-trivial with respect to $A$, and let $\mu$ be a M\"obius transformation exchanging $\alpha_1$ and $\alpha_2$.  Then for either $\phi$ or $\mu \circ \phi \circ \mu^{-1}$, one of the following holds:
\begin{enumerate}
\item[(3A)] $O_\phi^-(\alpha_1)$ has $3$-ramification structure (4), $\alpha_2 \in Ab(\alpha_1)$, and $\phi(\alpha_2) = \alpha_1$; 
\item[(3B)] $O_\phi^-(\alpha_1)$ (resp. $O_\phi^-(\alpha_2)$) has $3$-ramification structure (2) (resp. (1)), and $Ab(\alpha_1) \cap Ab(\alpha_2) = \emptyset$;
\item[(3C)] $O_\phi^-(\alpha_1)$ has $3$-ramification structure (9), $\alpha_2 \in Ab(\alpha_1)$, and $\phi(\alpha_2) = \alpha_1$.
\end{enumerate}
Moreover, in all cases we have
\begin{equation} \label{strong}
\text{all ramification points of $\phi$ lie in $\phi^{-1}(Ab(\alpha_1, \alpha_2))$, and $e_\phi(z) \in \{1, 3\}$ for all $z \in \PP^1(\C)$.}
\end{equation}
\end{theorem}

\begin{remark}
The conditions in \eqref{strong} are invariant under M\"obius conjugation, and thus hold for both $\phi$ and $\mu \circ \phi \circ \mu^{-1}$.
\end{remark}

\begin{proof}
Let $O_\phi^-(\alpha_1)$ have $3$-ramification structure (a) and $O_\phi^-(\alpha_1)$ have $3$-ramification structure (b), where we use the numbering of Theorems \ref{class1} and \ref{class2}. Replacing $\phi$ with $\mu \circ \phi \circ \mu^{-1}$ if necessary, we assume that $a \geq b$. In the case where $a = b$ clearly it is not necessary to replace $\phi$ by $\mu \circ \phi \circ \mu^{-1}$ in order to obtain $a \geq b$, and so we are free to make this replacement for other purposes. Because $\phi$ is assumed to be non-$3$-trivial with respect to $A$, we must have $a \not\in \{1, 6\}$. 

Suppose first that $3 \nmid \deg \phi$. If $a = 4$ then $Ab(\alpha_1)$ contains three $3$-branch abundant points for $\phi$, and hence by Lemma \ref{atmost4} we have that $\alpha_2 \in Ab(\alpha_1)$ and \eqref{strong} holds. Because $Ab(\alpha_1)$ consists of a 3-cycle, we may replace $\phi$ with $\mu \circ \phi \circ \mu^{-1}$ if necessary to obtain $\phi(\alpha_2) = \alpha_1$. This gives (3A).
Suppose that $a = b = 2$, and note that $Ab(\alpha_i)$ is invariant under $\phi$ for $i = 1, 2$. It follows that either $Ab(\alpha_1) \cap Ab(\alpha_2) = \emptyset$ or $Ab(\alpha_1) = Ab(\alpha_2)$. The former contradicts Lemma \ref{atmost4}, while the latter implies that $\phi$ is $3$-trivial with respect to $A$. This leaves us with $a = 2$ and $b = 1$. In this case $\phi(\alpha_2) = \alpha_2$, and so $Ab(\alpha_2) \cap Ab(\alpha_1) = \emptyset$, which is (3B). Lemma \ref{atmost4} then gives that \eqref{strong} holds. 

Suppose now that $3 \mid \deg \phi$.  If $a = 9$ then $Ab(\alpha_1)$ contains three $3$-branch abundant points for $\phi$, and hence by Lemma \ref{atmost4} we have $\alpha_2 \in Ab(\alpha_1)$ and \eqref{strong} holds. Replacing $\phi$ by $\mu \circ \phi \circ \mu^{-1}$ if necessary, we have $\phi(\alpha_2) = \alpha_1$, giving (3C). 
If $a = b = 7$, then both $\alpha_1$ and $\alpha_2$ are fixed points of $\phi$, and thus $Ab(\alpha_1)$ and $Ab(\alpha_2)$ are disjoint, contradicting Lemma \ref{atmost4}. If $a = 7, b = 6$, and $\alpha_2 \not \in Ab(\alpha_1)$, then Lemma \ref{atmost4} gives $e_\phi(z) \in \{1,3\}$ for all $z \in \PP^1(\C)$, and in particular $d = \sum_{z \in \phi^{-1}(\alpha_1)} e_\phi(z) \equiv 2 \bmod{3}$, contrary to supposition. Hence $\alpha_2 \in Ab(\alpha_1)$, implying that $\phi$ is $3$-trivial with respect to $A$.
\end{proof}

\begin{theorem} \label{2class}
Let $\phi \in \C(x)$ have degree $d \geq 2$, and assume that $A = \{\alpha_1, \alpha_2\} \subset \PP^1(\C)$ is a set of distinct $2$-branch abundant points for $\phi$. Suppose that $\phi$ is not $2$-trivial with respect to $A$, and let $\mu$ be a M\"obius transformation exchanging $\alpha_1$ and $\alpha_2$. Then for either $\phi$ or $\mu \circ \phi \circ \mu^{-1}$, one of the following holds: 
\begin{enumerate}
\item[(2A)] $O_\phi^-(\alpha_1)$ has $2$-ramification structure (5), $\alpha_2 \in Ab(\alpha_1)$, and $\phi(\alpha_2) = \alpha_1$;
\item[(2B)] $O_\phi^-(\alpha_1)$ has $2$-ramification structure (5), $\alpha_2 \in Ab(\alpha_1)$, $\phi(\alpha_2) \neq \alpha_1$, and $\phi^2(\alpha_2) = \alpha_1$;
\item[(2C)] $O_\phi^-(\alpha_1)$ (resp. $O_\phi^-(\alpha_2)$) has $2$-ramification structure (3) (resp. (1)), and $Ab(\alpha_1) \cap Ab(\alpha_2) = \emptyset$;
\item[(2D)] $O_\phi^-(\alpha_1)$ has $2$-ramification structure (3), $\alpha_2 \in Ab(\alpha_1)$, and $\phi(\alpha_2) = \alpha_1$;
\item[(2E)] $O_\phi^-(\alpha_1)$ and $O_\phi^-(\alpha_1)$ have $2$-ramification structure (2), and $Ab(\alpha_1) \cap Ab(\alpha_2) = \emptyset$;
\item[(2F)] $O_\phi^-(\alpha_1)$ (resp. $O_\phi^-(\alpha_2)$) has $2$-ramification structure (2) (resp. (1)), and $Ab(\alpha_1) \cap Ab(\alpha_2) = \emptyset$;
\item[(2G)] $O_\phi^-(\alpha_1)$ has $2$-ramification structure (12), $\alpha_2 \in Ab(\alpha_1)$, and $\phi(\alpha_2) = \alpha_1$;
\item[(2H)] $O_\phi^-(\alpha_1)$ has $2$-ramification structure (11), $\alpha_2 \in Ab(\alpha_1)$, $\phi(\alpha_2) = \alpha_1$, and $\phi(\alpha_1) = \alpha_2$;
\item[(2I)] $O_\phi^-(\alpha_1)$ has $2$-ramification structure (11), $\alpha_2 \in Ab(\alpha_1)$, $\phi(\alpha_2) = \alpha_1$, and $\phi(\alpha_1) \neq \alpha_2$;
\item[(2J)] $O_\phi^-(\alpha_1)$ has $2$-ramification structure (11), $\alpha_2 \in Ab(\alpha_1)$, $\phi(\alpha_2) \neq \alpha_1$, and $\phi^2(\alpha_2) = \alpha_1$;
\item[(2K)] $O_\phi^-(\alpha_1)$ has $2$-ramification structure (10), $\alpha_2 \in Ab(\alpha_1)$, and $\phi(\alpha_2) = \alpha_1$;
\item[(2L)] $O_\phi^-(\alpha_1)$ has $2$-ramification structure (10), $\alpha_2 \in Ab(\alpha_1)$, $\phi(\alpha_2) \neq \alpha_1$, and $\phi^2(\alpha_2) = \alpha_1$;
\item[(2M)] $O_\phi^-(\alpha_1)$ has $2$-ramification structure (8), $\alpha_2 \in Ab(\alpha_1)$, and $\phi(\alpha_2) = \alpha_1$;  
\item[(2N)] $O_\phi^-(\alpha_1)$ and $O_\phi^-(\alpha_1)$ have $2$-ramification structure (7), and $Ab(\alpha_1) \cap Ab(\alpha_2) = \emptyset$;
\item[(2O)] $O_\phi^-(\alpha_1)$ (resp. $O_\phi^-(\alpha_2)$) has $2$-ramification structure (7) (resp. (6)), and $Ab(\alpha_1) \cap Ab(\alpha_2) = \emptyset$.
\end{enumerate}
Moreover, in all cases except (2D), (2F), (2M), and (2O), we have
\begin{equation} \label{strong2}
\text{all ramification points of $\phi$ lie in $\phi^{-1}(Ab(\alpha_1, \alpha_2))$, and $e_\phi(z) \in \{1, 2\}$ for all $z \in \PP^1(\C)$.}
\end{equation}
\end{theorem}

\begin{proof}
Similarly to the proof of Theorem \ref{3class}, we let $O_\phi^-(\alpha_1)$ have $2$-ramification structure (a) and $O_\phi^-(\alpha_1)$ have $2$-ramification structure (b), and we assume that $a \geq b$. Because $\phi$ is assumed to be non-$2$-trivial with respect to $A$, we must have $a \not\in \{1, 6\}$. 

Suppose that $2 \nmid \deg \phi$. If $a = 5$ then $Ab(\alpha_1)$ contains four $2$-branch abundant points for $\phi$, and hence by Lemma \ref{atmost4} we have that $\alpha_2 \in Ab(\alpha_1)$ and \eqref{strong} holds. Replacing $\phi$ with $\mu \circ \phi \circ \mu^{-1}$ if necessary, we have either $\phi(\alpha_2) = \alpha_1$ or $\phi(\alpha_2) \neq \alpha_1$ and $\phi^2(\alpha_2) = \alpha_1$, giving (2A) and (2B), respectively.

If $a = 3$, then observe that $b \in \{1, 2, 3\}$, and it follows that both $Ab(\alpha_1)$ and $Ab(\alpha_2)$ are invariant under $\phi$. If $Ab(\alpha_1) \cap Ab(\alpha_2) = \emptyset$, then from Lemma \ref{atmost4} we must have that $b = 1$ and \eqref{strong2} holds. This gives (2C). 
If $Ab(\alpha_1) \cap Ab(\alpha_2) \neq \emptyset$, then the invariance of $Ab(\alpha_1)$ and $Ab(\alpha_2)$ under $\phi$ implies $\alpha_2 \in Ab(\alpha_1)$, and hence $b = 3$. Replacing $\phi$ by $\mu \circ \phi \circ \mu^{-1}$ if necessary, we have $\phi(\alpha_2) = \alpha_1$. This is (2D). 

If $a = b = 2$, then as in the previous paragraph we have that $Ab(\alpha_i)$ is invariant under $\phi$ for $i = 1, 2$. It follows that either $Ab(\alpha_1) \cap Ab(\alpha_2) = \emptyset$ or $Ab(\alpha_1) = Ab(\alpha_2)$. In the former case, Lemma \ref{atmost4} shows that \eqref{strong2} holds, giving (2E). In the latter case, $\phi$ is $2$-trivial with respect to $A$. This leaves us with $a = 2$ and $b = 1$. In this case $\phi(\alpha_2) = \alpha_2$, and so $Ab(\alpha_2) \cap Ab(\alpha_1) = \emptyset$, which is (2F). 

Suppose now that $2 \mid \deg \phi$.  If $a \in \{10, 11, 12\},$ then $Ab(\alpha_1)$ contains four $2$-branch abundant points for $\phi$, and hence by Lemma \ref{atmost4} we have $\alpha_2 \in Ab(\alpha_1)$ and \eqref{strong} holds. If $a = 12$, then because $\alpha_1 \neq \alpha_2$, we must have $\phi(\alpha_2) = \alpha_1$, and so (2G) holds. If $a = 11$ and $\phi(\alpha_2) = \alpha_1$, then either both $\alpha_1$ and $\alpha_2$ lie in the 2-cycle that is part of 2-ramification structure (11), or only $\alpha_1$ lies in said 2-cycle. These give (2H) and (2I), respectively. If $a = 11$ and $\phi(\alpha_2) \neq \alpha_1$, then $\alpha_1$ and $\phi(\alpha_2)$ must lie in the 2-cycle that is part of 2-ramification structure (11), and thus $\phi^2(\alpha_2) = \alpha_1$, giving (2J). If $a = 10$, then we clearly have either (2K) or (2L). 

If $a = 8$ and $Ab(\alpha_1) \cap Ab(\alpha_2) = \emptyset$, then from Lemma \ref{atmost4} we have $b = 6$. Then applying Lemma \ref{lem:count'} with $T = Ab(\alpha_1) \cup Ab(\alpha_2)$ gives
$$
\sum_{z \in \phi^{-1}(T)} (e_\phi(z) - 1) \geq \frac{4d-2}{2} > 2d - 2.
$$
Hence $\alpha_2 \in Ab(\alpha_1)$, and necessarily $\phi(\alpha_2) = \alpha_1$, giving (2M).

If $a = b = 7$, then both $\alpha_1$ and $\alpha_2$ are fixed points of $\phi$, and thus $Ab(\alpha_1)$ and $Ab(\alpha_2)$ are disjoint. By Lemma \ref{atmost4}, we have that \eqref{strong2} holds, giving (2N). If $a = 7, b = 6$, and $Ab(\alpha_2) \cap Ab(\alpha_1) \neq \emptyset$, then $\alpha_2 \in Ab(\alpha_1)$, implying that $\phi$ is $2$-trivial with respect to $A$. If $Ab(\alpha_2) \cap Ab(\alpha_1) \neq \emptyset$, we have (2O).
\end{proof}

\section{Maps with two $m$-branch abundant points, $m = 4$} \label{four}

In this section we study rational functions with two 4-branch abundant points $\alpha_1$ and $\alpha_2$. In Theorem \ref{main4thm}, we show that either such a map is $4$-trivial with respect to $\{\alpha_1, \alpha_2\}$ (see Definition \ref{trivial}), or the 4-ramification structure of $O_\phi^-(\alpha_1)$ has a very restricted form, and in particular $\alpha_2 \in Ab(\alpha_1)$ with $\phi(\alpha_2) = \alpha_1$. This is done in Theorem \ref{main4thm}.

\begin{theorem} \label{51}
Suppose $\phi \in \C(x)$ has degree $d$ with $d$ odd, and let $\alpha_1, \alpha_2 \in \PP^1(\C)$ be distinct $4$-branch abundant points for $\phi$.  Then $\phi$ is $4$-trivial with respect to $\{\alpha_1, \alpha_2\}$.
\end{theorem}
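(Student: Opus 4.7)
The strategy is to bound $b$ via Lemma~\ref{inequality}, use the oddness of $d$ to pin down the local ramification when $b = 1$, and then violate the $4$-branch abundance of $\alpha_1$ by producing a densely unramified backward tree.

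First, apply Lemma~\ref{inequality} with $m = 4$ to obtain $b(4d - 6) + 2\ell_Y \leq 4d - 4$. Since $d \geq 3$ is odd, $b \geq 2$ would force $4d \leq 8$, a contradiction; hence $b \leq 1$, and if $b = 1$ then $\ell_Y \leq 1$. Assume for contradiction that $b = 1$, with $\beta_1$ the unique element of $B$ and (after relabeling) $\phi(\beta_1) = \alpha_1$. Because $4 \nmid e_\phi(\beta_1)$ we have $p_{\beta_1} = 2$, so Lemma~\ref{comp3} yields a point $y = y_{\beta_1}$ with $2 \mid e_\phi(z)$ for every $z \in \phi^{-1}(y) \setminus \{\alpha_1, \alpha_2\}$. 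Reducing $\sum_{z \in \phi^{-1}(y)} e_\phi(z) = d$ modulo $2$ and using that $d$ is odd, we conclude that some $\alpha_i$ lies in $\phi^{-1}(y)$ with odd ramification index; call it $\alpha^*$, so $\ell_Y = 1$.

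Next, refine the Riemann--Hurwitz bookkeeping. Since $e_\phi(z) \geq 2$ for each $z \in \phi^{-1}(y) \setminus \{\alpha^*\}$, one has $\sum_{z \in \phi^{-1}(y)}(e_\phi(z) - 1) \geq (d + e_\phi(\alpha^*) - 2)/2$. Combining this with the standard estimates $\sum_{z \in \phi^{-1}(\alpha_i)}(e_\phi(z) - 1) \geq 3(d - k_i)/4$ and the bound $k_1 + k_2 \leq b + 2 - \ell_Y = 2$ from the proof of Lemma~\ref{inequality}, Riemann--Hurwitz forces $3(k_1 + k_2) \geq 2 e_\phi(\alpha^*) + 4$. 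Thus $e_\phi(\alpha^*) = 1$, $k_1 + k_2 = 2$, and every intermediate inequality is an equality. The resulting rigid picture is: $e_\phi(\beta_1) = e_\phi(\alpha^*) = 1$; each $z \in \phi^{-1}(\alpha_i)$ has $e_\phi(z) \in \{1, 4\}$; each $z \in \phi^{-1}(y) \setminus \{\alpha^*\}$ has $e_\phi(z) = 2$; and $\phi$ has no other ramification anywhere on $\mathbb{P}^1(\mathbb{C})$.

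The main obstacle is turning this rigid picture into a contradiction with $4$-branch abundance of $\alpha_1$. The key observation is that any backward path from $\alpha_1$ contributing to $\rho_n(\alpha_1)$ must begin with the unramified edge $\alpha_1 \leftarrow \beta_1$, since every other edge out of $\phi^{-1}(\alpha_1)$ carries $e_\phi = 4$. Now analyze $\phi^{-1}(\beta_1)$: if $\beta_1 = y$, then $\phi^{-1}(\beta_1) = \phi^{-1}(y)$ contains $(d-1)/2$ points of ramification index $2$, each of which lies outside $\{\alpha_1, \alpha_2, y\}$; and if $\beta_1 \neq y$, then $\phi^{-1}(\beta_1)$ is disjoint from $\phi^{-1}(\{\alpha_1, \alpha_2, y\})$ and hence consists of $d$ unramified points. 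Iterating this observation: whenever a backward path reaches a point $w \notin \{\alpha_1, \alpha_2, y\}$, the set $\phi^{-1}(w)$ is again disjoint from $\phi^{-1}(\{\alpha_1, \alpha_2, y\})$ and consists of $d$ unramified points. Since only finitely many branches re-enter the finite forward orbit of $\alpha_1$, the number of length-$n$ backward paths from $\alpha_1$ with $v_2(e_{\phi^n}) \leq 1$ grows like $C \cdot d^n$, forcing $\rho_n(\alpha_1) \to \infty$. This contradicts the $4$-branch abundance of $\alpha_1$ and completes the proof. The delicate step is verifying that the finitely many branches that interact with the forward cycle through $\alpha_1, \beta_1, y, \alpha^*$ cannot absorb the exponentially many unramified preimages produced by the generic backward tree.
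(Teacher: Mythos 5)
Your preliminary analysis is correct, and it takes a genuinely different route from the paper. Where the paper immediately invokes the classification of $2$-branch abundant points when $2 \nmid d$ (Theorems \ref{thm:ndiv} and \ref{class1}) to conclude that each $\alpha_i$ is periodic in an odd-ramification cycle with a unique odd preimage $x_{\alpha_i}$, and --- because any $w \in \phi^{-1}(\alpha_i)$ with $e_\phi(w) \equiv 2 \pmod 4$ would itself be $2$-branch abundant, hence periodic, hence in that odd cycle, a contradiction --- that every non-cycle preimage has $4 \mid e_\phi$, you instead bound $b \leq 1$ from Lemma \ref{inequality}, and when $b = 1$ use oddness of $d$ together with Lemma \ref{comp3} to force $\ell_Y = 1$ and an equality case of Riemann--Hurwitz. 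The rigid ramification picture you extract (all preimages of $\alpha_i$ with $e_\phi \in \{1,4\}$, all of $\phi^{-1}(y)\setminus\{\alpha^*\}$ with $e_\phi = 2$, no other ramification) is correct.

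However, the step you flag as ``delicate'' is a genuine gap, and as written the argument fails. Two problems. First, the assertion that the count of length-$n$ backward paths from $\alpha_1$ avoiding $E = \{\alpha_1,\alpha_2,y\}$ grows like $Cd^n$ does not follow from excluding at most $|E|=3$ preimages per level; that estimate already collapses for $d = 3$ (which you would have to rule out separately: for $d=3$ the rigid picture forces all three preimages of each $\alpha_i$ to be unramified, giving $k_1+k_2 \geq 6 > 2$). Second, and more fundamentally, avoiding $E$ is not enough: to keep $e_{\phi^n}$ from accumulating factors of $2$ you need a preimage whose entire backward orbit avoids the \emph{post-critical set}, not just $E$, and the rigid picture does not supply one at depth one. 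Indeed, when $\beta_1 = y$, every element of $\phi^{-1}(\beta_1)$ is either $\alpha^*\in E$ or a critical point of multiplicity $2$, so $\phi^{-1}(\beta_1)\subseteq C\cup E$ and the proposed ``generic'' branch does not yet exist. A correct completion must observe that $P := C \cup O_\phi(E)$ is finite (all of $\alpha_1,\alpha_2,y$ are preperiodic in the rigid picture) and forward-invariant, and then descend two or three preimage levels below $\beta_1$ to locate $\gamma\notin P$ with $4\nmid e_{\phi^k}(\gamma)$ and $\phi^k(\gamma)=\alpha_1$; then $\phi^{-m}(\gamma)$ contributes $d^m$ points to $\rho_{k+m}(\alpha_1)$. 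This is fixable, but the repair essentially reconstructs the content of Lemma \ref{primelemma} and Theorem \ref{thm:ndiv}, which the paper's proof deploys directly and much more economically.
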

\begin{proof} Any point $\alpha$ that is 4-branch abundant for $\phi$ is also $2$-branch abundant for $\phi$, and by the classification of $2$-branch abundant points (Theorem \ref{class1}), $\alpha$ must be periodic with its orbit consisting only of points with odd ramification index. If $w \in \phi^{-1}(\alpha)$ has ramification index divisible by $2$ but not by $4$, then by Lemma \ref{primelemma} we have that $w$ is $2$-branch abundant, and hence by Theorem \ref{class1} $w$ must be periodic.  But then $w \in O_\phi^+(\alpha)$, and the evenness of $e_\phi(w)$ gives a contradiction. Furthermore, again by Theorem \ref{class1}, $\phi^{-1}(\alpha)$ must have a unique element with odd ramification index.  Thus, there is $x_{\alpha} \in\phi^{-1}(\alpha)$ with odd ramification index such that every member of $\phi^{-1}(\alpha)\setminus \{x_\alpha\}$ has ramification index divisible by $4$. 

Suppose now that $\alpha_1, \alpha_2,$ and $\alpha_3$ are distinct 4-branch abundant points for $\phi$.
By Lemma \ref{lem:count'}, we have
\[\sum_{c\in \phi^{-1}(\{\alpha_1,\alpha_2,\alpha_3\})}(e_{\phi}(c)-1)\geq 3((d-1)\left(3/4\right)) = \frac{9}{8}(2d-2)>2d-2,
\]
contradicting Riemann-Hurwitz.
Hence if $\alpha_1$ and $\alpha_2$ are distinct $4$-branch abundant for $\phi$, they are the only such points. 
Now $x_{\alpha_1}$ and  $x_{\alpha_2}$ are also $4$-branch abundant, and hence $\{x_{\alpha_1}, x_{\alpha_2}\} = \{\alpha_1,\alpha_2\}$. 
Therefore $\phi^{-1}( \{\alpha_1,\alpha_2\}) \setminus  \{\alpha_1,\alpha_2\}$ is empty, as desired. 
\end{proof}

\begin{lemma} \label{atmost3}
Let $\phi \in \C(x)$ have even degree $d \geq 2$. If $\phi$ has two 4-branch abundant points in $\PP^1(\C)$,  then $\phi$ has at most three 2-branch abundant points in $\PP^1(\C)$. 
\end{lemma}

\begin{proof}
Let $A = \{\alpha_1, \alpha_2\} \subset \PP^1(\C)$ be a set of two 4-branch abundant points for $\phi$, and suppose that $V \subset \PP^1(\C)$ is a set of four 2-branch abundant points for $\phi$. Because $\alpha_1$ and $\alpha_2$ are 2-branch abundant, we have $A \subseteq V$, and we take $V = \{\alpha_1, \alpha_2, v_1, v_2\}$.  By Lemma \ref{atmost4}, $V$ is the complete set of 2-branch abundant points, and $e_\phi(z) \in \{1, 2\}$ for each $z \in \phi^{-1}(V)$. It then follows from Lemma \ref{primelemma} that every element of $\phi^{-1}(A)$ is 2-branch abundant for $\phi$, and so 
$\phi^{-1}(A) \subseteq V$. Hence
\begin{equation} \label{W1}
2d = \sum_{z \in \phi^{-1}(A)} e_\phi(z) \leq \sum_{z \in V} e_\phi(z) \leq 8,
\end{equation}
and it follows that $d \in \{2, 4\}$. If $d = 4$, then we have equality in \eqref{W1}, implying that $\phi^{-1}(A) = V$ and $e_\phi(z) = 2$ for all $z \in V$. Because $\phi^{-1}(A) = V$ we must have $\phi^{-1}(\{v_1, v_2\}) \cap V = \emptyset$, for otherwise applying $\phi$ gives the impossible $\{v_1, v_2\} \cap A \neq \emptyset$. If $e_\phi(u) = 2$ for all $u \in \phi^{-1}(\{v_1, v_2\})$, then $\# \phi^{-1}(\{v_1, v_2\}) = 2d/2 = 4$, and together with $e_\phi(z) = 2$ for $z \in V$ we have a contradiction to Riemann-Hurwitz (recall $d = 4$ here). Hence 
$e_\phi(u) = 1$ for some $u \in \phi^{-1}(\{v_1, v_2\})$. But then $u$ is 2-branch abundant and $u \not\in V$, contradicting Lemma \ref{atmost4}.

Finally, suppose $d = 2$. Let $U = \{z \in \phi^{-1}(\{v_1, v_2\}) : e_\phi(z) = 1\}$, and note that the set $\phi^{-1}(A) \cup U$ consists of 2-branch abundant points for $\phi$, and so is a subset of $V$, and hence has at most four elements. Let $r_1$ (resp. $r_2$) be the number of ramification points of $\phi$ in $\phi^{-1}(A)$ (resp. $\phi^{-1}(\{v_1, v_2\}$)), and note that  $\#\phi^{-1}(A) = 4 - r_1$ and $\#U = 4 - 2r_2$. Thus $4 - r_1 + 4 - 2r_2 \leq 4$, implying $4 \leq r_1 + 2r_2$. Because $d = 2$ and $\phi^{-1}(A) \cap \phi^{-1}(\{v_1, v_2\} = \emptyset$ (otherwise $A \cap \{v_1, v_2\} \neq \emptyset$), we have $r_1 + r_2 \leq 2$. It follows that $r_2 = 2$ and $r_1 = 0$. Now $r_1 = 0$ implies $e_\phi(z) = 1$ for all $z \in V$. In addition, $r_1 = 0$ and $\phi^{-1}(A) \subseteq V$ give $\phi^{-1}(A) = V$, and hence $v_1$ and $v_2$ are 4-branch abundant by Lemma \ref{primelemma}. Therefore $\phi^{-1}(\{v_1, v_2\})$ consists of $2$-branch abundant points, again by Lemma \ref{primelemma}. But $r_2 = 2$ and $e_\phi(z) = 1$ for all $z \in V$ imply $\phi^{-1}(\{v_1, v_2\}) \cap V = \emptyset$, contradicting Lemma \ref{atmost4}.
\end{proof}

\begin{theorem} \label{main4thm}
Let $\phi \in \C(x)$ have degree $d \geq 2$, and assume that $A = \{\alpha_1, \alpha_2\} \subset \PP^1(\C)$ is a set of distinct $4$-branch abundant points for $\phi$. Suppose that $\phi$ is not $4$-trivial with respect to $A$, and let $\mu$ be a M\"obius transformation exchanging $\alpha_1$ and $\alpha_2$.  Then for either $\phi$ or $\mu \circ \phi \circ \mu^{-1}$, the $4$-ramification structure for $O_\phi^-(\alpha_1)$ is one of the following, where points named with distinct letters within a given diagram are distinct:
\begin{center}
\begin{tabular}{| l c | l c |}
\hline13. && 14. &\\
&
\begin{tikzpicture}
	\Tree
		[.\node (alpha1) {$\alpha_1$};
			\edge [double]; [.\phantom{$\alpha$} ]
			[.$\alpha_1$ ]
			[.\node (alpha2) {$\alpha_2$};
				\edge [double]; [.\phantom{$\beta$} ]
				[.\node (beta) {$\beta$};
					\edge [double]; [.\phantom{$\beta$} ]
				]
			]
		]
	\node [below left=-.15cm of alpha1, xshift=-.1cm] {\scriptsize $4^*$};
	\node [below left=-.2cm of alpha2, xshift=.15cm] {\scriptsize $4^*$};
	\node [below right=-.1cm of alpha2, xshift=-.2cm] {\scriptsize$2$};
	\node [below=-.05cm of beta, xshift=-.15cm] {\scriptsize $2^*$};
\end{tikzpicture}
&&
\begin{tikzpicture}
	\Tree
		[.\node (alpha1) {$\alpha_1$};
			\edge [double]; [.\phantom{$\beta$} ]
			[.$\alpha_1$ ]
			[.\node (alpha2) {$\alpha_2$};
				\edge [double]; [.\phantom{$\beta$} ]
			]
			[.\node (beta) {$\beta$};
				\edge [double]; [.\phantom{$\beta$} 
					\edge [draw=none]; [.{} ]
				]
			]
		]
	\node [below left=-.25cm of alpha1, xshift=-.25cm] {\scriptsize $4^*$};
	\node [below right=-.2cm of alpha1, xshift=.3cm] {\scriptsize $2$};
	\node [below=-.05cm of alpha2, xshift=-.2cm] {\scriptsize $4^*$};
	\node [below=-.05cm of beta, xshift=-.2cm] {\scriptsize $2^*$};
\end{tikzpicture}
\\\hline
\end{tabular}
\end{center}
\end{theorem}

\begin{proof} Because $\phi$ is not $4$-trivial with respect to $A$, Theorem \ref{51} shows that $d$ is even. Let
\[
B = \{z \in \phi^{-1}(A) \setminus  A : 4 \nmid e_\phi(z)\},
\] 
and observe that by Lemma \ref{primelemma}, $B$ consists of $2$-branch abundant points for $\phi$. It follows from Lemma \ref{atmost3} that $\#B \leq 1$. If $B$ is empty, then $\phi$ is $4$-trivial with respect to $A$, which gives a contradiction. Hence $\#B = 1$, and we take $B = \{\beta\}$. Observe that $4 \nmid e_\phi(\beta)$ and $4 \mid e_\phi(z)$ for each $z \in \phi^{-1}(A) \setminus A$ with $z \neq \beta$. But also $\sum_{z\in \phi^{-1}(A)}e_\phi(z)=2d$ is divisible by 4 (since $d$ is even), whence we must have $\phi^{-1}(A) \cap A \neq \emptyset$, which implies
\begin{equation} \label{alphas}
A \cap \phi(A) \neq \emptyset.
\end{equation}

If $2\nmid e_\phi(\beta)$, then Lemma \ref{primelemma} gives that $\beta$ is 4-branch abundant for $\phi$, and so $W = \{\alpha_1, \alpha_2, \beta\}$ is a set of three 4-branch abundant points for $\phi$. Then $U = \#\{\phi^{-1}(W) \setminus W : 4 \nmid e_\phi(z)\}$ consists of $2$-branch abundant points for $\phi$, and Lemma \ref{atmost3} implies that $U$ is empty. Applying Lemma \ref{lem:count'} with $T = W$ gives the contradiction
\begin{equation*}
\sum_{z \in \phi^{-1}(W)} (e_\phi(z) - 1) \geq (3d - 3) \cdot (3/4) > 2d - 2. 
\end{equation*}

Therefore $e_\phi(\beta) \equiv 2 \bmod{4}$. 
Suppose now that there is $v \in \phi^{-1}(\beta)$ with $e_\phi(v)$ odd. Because $d$ is even, there must also be $v' \in \phi^{-1}(\beta)$ with $e_\phi(v')$ odd and $v' \neq v$. By Lemma \ref{primelemma}, $\beta, v$, and $v'$ are all 2-branch abundant, and so by Lemma \ref{atmost3} we have $\#\{\alpha_1, \alpha_2, \beta, v, v'\} = 3$. But $\beta \not\in \{v, v'\}$, for otherwise applying $\phi$ gives the impossible $A \ni \beta$. Hence $\{v, v'\} = A$, and thus $\{\beta\} = \phi(A)$, contradicting \eqref{alphas}.

Thus all elements of $\phi^{-1}(\beta)$ have even ramification index. Let $R = \{z \in \phi^{-1}(A) : 4 \mid e_\phi(z)\}$, and observe that 
$\phi^{-1}(A) \subseteq A \cup \{\beta\} \cup R$, with equality if and only if $A \subseteq \phi^{-1}(A)$. We claim that 
\begin{align} 
\#\phi^{-1}(A) \leq 3 + \frac{2d-2-e_\phi(\beta)}{4}, \qquad \text{with equality holding if and only if} \label{conditions1} \\
\text{$e_\phi(z) = 4$ for all $z \in R$, \quad $A \subseteq \phi^{-1}(A)$, \quad and $e_\phi(\alpha_1) = e_\phi(\alpha_2) = 1.$} \label{conditions2}
\end{align}
To see why, let $a = \#(A \cap \phi^{-1}(A))$, and write $e_\phi(z) = 4r_z$ for each $z \in R$. Then $\#\phi^{-1}(A) = a + 1 + \#R$. To compute $\#R$, observe that
\[
2d = \sum_{\phi^{-1}(A)} e_\phi(z) = \left(\sum_{z \in (A \cap \phi^{-1}(A))} e_\phi(z) \right) + e_\phi(\beta) + 4\left( \sum_{z \in R}(r_z - 1) + \#R \right),
\]
from which it follows that
\[
\#\phi^{-1}(A) = a + 1 + \frac{1}{4} \left(2d - e_\phi(\beta) - \sum_{z \in (A \cap \phi^{-1}(A))} e_\phi(z) \right) -  \sum_{z \in R}(r_z - 1) \leq a + 1 + \frac{1}{4} \left(2d - e_\phi(\beta) - a \right),
\]
with equality holding if and only if $e_\phi(z) = 1$ for all $z \in (A \cap \phi^{-1}(A))$ and $e_\phi(z) = 4$ for all $z \in R$. 
But $a \in \{1,2\}$, and from this one has $a + 1 + (2d - e_\phi(\beta) - a)/4 \leq 3 + (2d - 2 - e_\phi(\beta))/4$, with equality holding if and only if $a = 2$. This proves the statements in \eqref{conditions1} and \eqref{conditions2}. Recall that all elements of $\phi^{-1}(\beta)$ have even ramification index, and apply this together with \eqref{conditions1} to get 
\begin{align} \label{4est}
\sum_{z\in \phi^{-1}(A \cup \{\beta\})}(e_\phi(z)-1) & = 2d - \#(\phi^{-1}(A)) + \sum_{z\in \phi^{-1}(\beta)} (e_\phi(z)-1) \\
& \geq 2d- \left(3 + \frac{2d-2-e_\phi(\beta)}{4} \right)+\frac{d}{2} \nonumber \\
& = 2d - 3 + \frac{2 + e_\phi(\beta)}{4} \nonumber \\
& \geq 2d- 2, \nonumber
\end{align}
with equality holding if and only if the conditions in \eqref{conditions2} hold, and also $e_\phi(\beta)=2$ and $e_\phi(z) = 2$ for all $z\in \phi^{-1}(\beta)$. Because $d$ is even, we must have $A \subseteq \phi^{-1}(\alpha_1)$ or $A \subseteq \phi^{-1}(\alpha_2)$; replacing $\phi$ by $\mu \circ \phi \circ \mu^{-1}$ if necessary, we assume the former. If $d\equiv 2 \pmod{4}$, we obtain 4-ramification structure (13) for $O_\phi^-(\alpha_1)$, and if $d\equiv 0 \pmod{4}$ we obtain 4-ramification structure (14) for $O_\phi^-(\alpha_1)$.
\end{proof}

\section{Field of definition of $\phi$ and its components} \label{fielddef}

Many of our main results require showing that if $\phi$ is defined over a subfield $K$ of $\C$, then certain irreducible factors of the numerator and denominator of iterates of $\phi$ may also be defined over $K$. In view of potential future applications, and because it entails no additional work, we state the results of this section for arbitrary fields of characteristic zero. 

\begin{lemma} \label{anypolys}
Let $F$ be a field of characteristic zero and $\overline{F}$ an algebraic closure of $F$. Given $h \in \overline{F}[x]$ and $m \geq 2$, let $g \in \overline{F}[x]$ be the monic polynomial of maximal degree such that $h(x) = f(x)(g(x))^m$ for some $f \in \overline{F}[x]$. If $h$ has coefficients in $F$, then so do both $f$ and $g$.
\end{lemma}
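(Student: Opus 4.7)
The plan is to use Galois descent: since we are in characteristic zero, a polynomial in $\overline{F}[x]$ lies in $F[x]$ if and only if it is fixed by every element of $G := \mathrm{Gal}(\overline{F}/F)$. So it suffices to produce an explicit description of $g(x)$ that is manifestly $G$-invariant.

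First, I would factor $h(x)$ over $\overline{F}$ as $h(x) = c\prod_{\alpha}(x-\alpha)^{m_\alpha}$, where $c \in F^*$ is the leading coefficient and the product runs over the distinct roots of $h$ in $\overline{F}$. Comparing multiplicities root by root, the polynomial $g(x)$ of maximal degree with $g(x)^m \mid h(x)$ is exactly
\[
g(x) \;=\; \prod_{\alpha}(x-\alpha)^{\lfloor m_\alpha/m\rfloor},
\]
and correspondingly $f(x) = c\prod_{\alpha}(x-\alpha)^{m_\alpha - m\lfloor m_\alpha/m\rfloor}$. (This is the only place I use that we work in characteristic zero, via unique factorization in $\overline{F}[x]$.)

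Next, I would show $G$-invariance. Any $\sigma\in G$ permutes the roots of $h$, and because $\sigma$ is a ring automorphism it preserves the multiplicity of each root: $m_{\sigma(\alpha)} = m_\alpha$. Consequently $\lfloor m_{\sigma(\alpha)}/m\rfloor = \lfloor m_\alpha/m\rfloor$, and applying $\sigma$ to the displayed product merely permutes its factors, leaving $g(x)$ unchanged. Thus $\sigma(g)=g$ for all $\sigma\in G$, so $g(x)\in F[x]$. The same argument applied to the exponents $m_\alpha - m\lfloor m_\alpha/m\rfloor$ (or alternatively the identity $f = h/g^m$ inside $F(x)\cap\overline{F}[x] = F[x]$) shows $f(x)\in F[x]$ as well; the leading coefficient $c$ is automatically in $F$ since $g$ is monic and $h\in F[x]$.

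There is no real obstacle here; the only subtlety worth flagging is the need to verify that the integer-valued exponent function $\alpha\mapsto \lfloor m_\alpha/m\rfloor$ is constant on $G$-orbits of roots, which is immediate once one notes that $\sigma$ preserves multiplicities. In positive characteristic the same argument would go through provided one interprets ``multiplicity'' in $\overline{F}[x]$, but the statement is only invoked in characteristic zero, so no additional care is required.
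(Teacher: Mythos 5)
Your proof is correct and follows essentially the same strategy as the paper's: both arguments amount to Galois descent, observing that any $\sigma\in\mathrm{Gal}(\overline F/F)$ permutes the roots of $h$ preserving multiplicities and hence fixes $g$ and $f$. Your explicit formula $g(x)=\prod_\alpha(x-\alpha)^{\lfloor m_\alpha/m\rfloor}$ is a slightly cleaner packaging of what the paper expresses via the partition into the sets $R_1,R_2,R_3$.

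One aside in your write-up is mistaken and worth correcting, because it concerns where the hypothesis $\operatorname{char}F=0$ actually enters. You attribute it to unique factorization, but unique factorization in $\overline F[x]$ holds in every characteristic; what can fail is the descent step ``fixed by $\mathrm{Gal}(\overline F/F)$ implies lies in $F$,'' which requires $\overline F/F$ to be Galois, i.e.\ $F$ to be perfect. Consequently your claim that the argument would go through in positive characteristic is false. The paper's remark after the lemma gives the counterexample: $F=\mathbb{F}_\ell(t)$, $h(x)=x(x^\ell-t)$, $m=\ell$, so that $g(x)=x-\sqrt[\ell]{t}\notin F[x]$ even though $h\in F[x]$. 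Here $\sqrt[\ell]{t}$ is purely inseparable over $F$, hence fixed by every $\sigma\in\mathrm{Gal}(\overline F/F)$ without lying in $F$, and the descent argument collapses.
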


\begin{remark}
The assumption that $F$ have characteristic zero is necessary, as illustrated by the case where $\ell$ is prime, $F = \mathbb{F}_\ell(t)$, $f(x) = x$, $g(x) = (x - \sqrt[\ell]{t})$, and $m = \ell$. 
\end{remark}

\begin{proof} 
Let 
\begin{align*} 
R_1 & = \{\text{roots of $f$ that are not roots of $g$}\}, \\ 
R_2 & = \{\text{roots of $g$ that are not roots of $f$}\}, \\ \textrm{Crit}(\phi)
R_ 3 &= \{\text{roots of both $f$ and $g$}\}. 
\end{align*}
These are pairwise disjoint subsets of $\overline{F}$. 
The maximality of the degree of $g$ implies that 
$e_h(\alpha) < m$ for each $\alpha \in R_1$, $m \mid e_h(\alpha)$ for each $\alpha \in R_2$, and each $\alpha \in R_3$ satisfies $m > e_h(\alpha)$ and 
$m \nmid e_h(\alpha)$. Because the set of roots of $h$ is $R_1 \cup R_2 \cup R_3$ and $h \in F[x]$, each $\sigma \in G_F := \Gal(\overline{F}/F)$ permutes $R_1 \cup R_2 \cup R_3$. We also have $e_h(\alpha) = e_h(\sigma(\alpha))$, and it follows that $\sigma(R_i) = R_i$ for $i = 1, 2, 3$. Now the set of roots of $f$ is $R_1 \cup R_3$, and the set of roots of $g$ is $R_2 \cup R_3$. Let $c_{f}$ be the leading coefficient of $f$, and observe that each of $f/c_f$ and $g$ are monic polynomials whose set of roots is preserved by the action of $G_F$. Because $F$ has characteristic zero, $\overline{F}/F$ is Galois, and thus the fixed field of $G_F$ is $F$, implying that $f/c_f$ and $g$ are both in $F[x]$. But $c_f$ is the leading coefficient of $h$, and thus is in $F$. Hence $f \in F[x]$. 
\end{proof}

We remark here that by definition a rational function $\phi$ is defined over $F$ (written $\phi \in F(x)$) if there are relatively prime $p, q \in F[x]$ with $\phi = p/q$. If $\phi \in F(x)$ and $f, g \in \overline{F}[x]$ with $\phi = f/g$ and $\gcd(f,g) = 1$, then we have $pg = fq$, whence $cf = p$ and $cg = q$ for some $c \in \overline{F}$. 
If $f$ and $g$ are monic then $c$ equals the leading coefficient of $p$ (or $q$), and hence $c \in F$, giving that $f, g \in F[x]$.

\begin{theorem} \label{fieldthm}
Let $F$ be a field of characteristic zero, $\overline{F}$ an algebraic closure of $F$, and $\phi \in \overline{F}(x)$. Let $\textrm{Crit}(\phi)$ be the set of all $\alpha \in \PP^1(\overline{F})$ with $e_\phi(\alpha) > 1$. For each $\alpha \in \textrm{Crit}(\phi)$, write $e_\phi(\alpha) = q_\alpha m + r_\alpha$, with $0 < r_\alpha < m$. Let $$\psi(x) = \prod_{\alpha \in \textrm{Crit}(\phi)} (x - \alpha)^{q_\alpha}.$$ If $\phi \in F(x)$, then $\psi(x)$ and $\phi(x)/(\psi(x))^m$ are both in $F(x)$. 
\end{theorem}

\begin{proof}
Assume $\phi \in F(x)$. Write $\psi = g_1/g_2$, where each $g_i \in \overline{F}[x]$ is monic and $\gcd(g_1, g_2) = 1$, and write $\phi(x)/(\psi(x))^m = f_1/f_2$, where each $f_i \in \overline{F}[x]$ and $\gcd(f_1, f_2) = 1$. Because $\phi \in F[x]$, there is $c \in \overline{F}$ with $cf_i(x)(g_i(x))^m \in F[x]$ for $i = 1, 2$. By Lemma \ref{anypolys} we have $g_i \in F[x]$ and $cf_i \in F[x]$. Hence $\psi \in F(x)$ and $\phi(x)/(\psi(x))^m \in F(x)$, the latter since 
$\phi(x)/(\psi(x))^m = (cf_1)/(cf_2)$.
\end{proof}

\section{Proof of Theorem \ref{maingenus}} \label{firstthmsec}

To prove Theorem \ref{maingenus}, 
we must relate the ramification structure of backward orbits of $m$-branch abundant points to global properties of $\phi$. When these ramification structures have certain properties, $\phi$ must descend from an endomorphism of an algebraic group -- either $\mathbb{G}_m$ or an elliptic curve. For this we cite some results from the invaluable paper of Milnor \cite{milnor}. We call $z \in \PP^1(\C)$ \textit{exceptional} for $\phi$ if the backwards orbit $\bigcup_{n=1}^{\infty} \phi^{-n}(z)$ is finite, and we denote by $\mathcal{E}_\phi$ the collection of all exceptional points for $\phi$. Recall that we denote the postcritical set of $\phi$ by \text{Postcrit$(\phi)$} (see the paragraph before Theorem \ref{maingenus} for the definition). A rational function $\phi \in \C(x)$ of degree at least two is a \textit{finite quotient of an affine map} if there is a flat surface $\C / \Lambda$ (where $\Lambda \subset \C$ is a lattice), an affine self-map of $\C/\Lambda$ given by $L(t) = at + b$, and a finite-to-one holomorphic map $\Theta : \C/\Lambda \to \PP^1(\C) \setminus \mathcal{E}_\phi$ satisfying $\phi \circ \Theta = \Theta \circ L$. As stated prior to Theorem \ref{maingenus}, we call $\phi$ a Latt\`es map when $\Lambda$ has rank two, and hence $\C / \Lambda$ is a torus. Milnor states a useful ramification-based characterization of Latt\`es maps, which we make heavy use of in the proof of Theorem \ref{maingenus}. 

\begin{theorem}[Milnor \cite{milnor}, Theorem 4.1] \label{milnorlattes}
\label{milnor}
Let $\phi \in \C(x)$ be a rational function and $\mathcal{E}_\phi$ its set of exceptional points. Then $\phi$ is a finite quotient of an affine map if and only if there exists an integer-valued function $r(z)$ on $\PP^1(\C) \setminus \mathcal{E}_\phi$ that satisfies $r(\phi(z))=e_\phi(z)r(z)$ and takes the value 1 outside of \text{Postcrit$(\phi)$}.   
\end{theorem}

We can extend $r$ to a function from $\PP^1(\C)$ to $\Z \cup \{\infty\}$ by taking $r(z)=\infty$ if $z\in \mathcal{E}_\phi$. When $\phi$ is a finite quotient of an affine map, its \textit{signature} is the sequence of values $r$ takes on \text{Postcrit$(\phi)$}. It is true, though not obvious, that the existence of the map $r$ as in Theorem \ref{milnor} implies the finiteness of the post-critical set of $\phi$ (see the proof of Theorem 4.1 in \cite{milnor}). In Theorem $4.5$ and Remark $4.7$ of \cite{milnor}, Milnor shows that there are only six possible signatures. They are $(2, 2, \infty)$ and $(\infty, \infty)$, which give maps conjugate to Chebyshev polynomials and power maps, respectively; and $(2,2,2,2)$, $(3,3,3)$, $(2,4,4)$, and $(2,3,6),$ which give Latt\`es maps. We summarize this as follows: 

\begin{theorem}[Milnor \cite{milnor}] \label{milnor2}
Let $\phi \in \C(x)$ be a rational function. Then $\phi$ is a Latt\`es map if and only if there exists a function $r: \PP^1(\C) \to \Z$ satisfying $r(\phi(z))=e_\phi(z)r(z)$ and taking the value 1 outside of \text{Postcrit$(\phi)$}. In this case, the signature of $\phi$ is one of $(2,2,2,2)$, $(3,3,3)$, $(2,4,4)$, or $(2,3,6).$
\end{theorem}

We immediately obtain a corollary that will be useful in the proof of Theorem \ref{maingenus}.

\begin{corollary} \label{lattescor}
Let $m \in \Z$ with $m \geq 2$, let $\phi \in \C(x)$ have degree $d \geq 2$, and assume that $A = \{\alpha_1, \alpha_2\} \subset \PP^1(\C)$ is a set of distinct $m$-branch abundant points for $\phi$. Suppose that $\phi$ is not $m$-trivial with respect to $A$, and let $\mu$ be a M\"obius transformation exchanging $\alpha_1$ and $\alpha_2$. Then $m \leq 4$. If $m = 4$ (resp. 3), then $\phi$ is a Latt\`es map of signature (2,4,4) (resp. (3,3,3)) with $r(\alpha_1) = r(\alpha_2) = m$. 
If $m = 2$, then unless $\phi$ or $\mu \circ \phi \circ \mu^{-1}$ satisfies (2D), (2F), (2M), or (2O) of Theorem \ref{2class}, $\phi$ is a Latt\`es map of signature (2,2,2,2) with $r(\alpha_1) = r(\alpha_2) = 2$. 
\end{corollary} 

\begin{proof}
It is well-known that the collection of Latt\`es maps is invariant under M\"obius conjugation;
hence for the present corollary it suffices to show that either $\phi$ or $\mu \circ \phi \circ \mu^{-1}$ is Latt\`es. 
Because $\phi$ is not $m$-trivial with respect to $A$, Theorem \ref{mge5} shows $m \leq 4$. If $m = 4$, then for either $\phi$ or $\mu \circ \phi \circ \mu^{-1}$, $O_\phi^+(\alpha_1)$ has 4-ramification structure (13) or (14) in Theorem \ref{main4thm}, and we let $\beta$ be as in those 4-ramification structures. Observe that \eqref{4est} implies that $e_\phi(z) = 1$ for $z \not\in \phi^{-1}(\{\alpha_1, \alpha_2, \beta\}$, and so taking $r(\alpha_1) = r(\alpha_2) = 4$ and $r(\beta) = 2$ and applying Theorem \ref{milnor2} shows that $\phi$ is Latt\`es of signature (2,4,4). If $m = 3$, then it follows from Theorem \ref{3class} that we may take $r(z) = 3$ for $z \in Ab(\alpha_1, \alpha_2)$ and $r(z) = 1$ otherwise and apply Theorem \ref{milnor2} to show that $\phi$ is Latt\`es of signature (3,3,3). If $m = 2$ and neither $\phi$ nor $\mu \circ \phi \circ \mu^{-1}$ satisfies (2D), (2F), (2M), or (2O) of Theorem \ref{2class}, then it follows from Theorem \ref{3class} that we may take $r(z) = 2$ for $z \in Ab(\alpha_1, \alpha_2)$ and $r(z) = 1$ otherwise and apply Theorem \ref{milnor2} to show that $\phi$ is Latt\`es of signature (2,2,2,2).
\end{proof}

\begin{proof}[Proof of Theorem \ref{maingenus}]
Fix $m \geq 2$, let $K$ be a subfield of $\C$, let $\phi \in K(x)$ have degree $d \geq 2$, and let $g_n$ be defined as in the discussion before Theorem \ref{iterative relationship}. 

Suppose that $g_n$ is bounded as $n \to \infty$. By Corollary \ref{rhocor} we have that $0$ and $\infty$ are $m$-branch abundant points for $\phi$. If $\phi$ is $m$-trivial with respect to $\{0, \infty\}$, then Proposition \ref{triv} shows that $\phi(x) = cx^j(\psi(x))^m$ with $\psi \in \C(x)$, $0 \leq j \leq m-1$, and $c \in \C^*$. We may apply Theorem \ref{fieldthm} to conclude that $\psi \in K(x)$ and $c \in K^*$.

Assume that $\phi$ is not $m$-trivial with respect to $\{0, \infty\}$. We apply Corollary \ref{lattescor} with $\mu(x) = 1/x$. If $m = 4$ (resp. $m = 3$), then Corollary \ref{lattescor} shows that we are in case (2) (resp. (3)) of the present theorem. If $m = 2$ and neither $\phi$ nor $\mu \circ \phi \circ \mu^{-1}$ satisfies (2D), (2F), (2M), or (2O) of Theorem \ref{2class}, then we are in case (4) of the present theorem. 

If $m = 2$ and one of $\phi$ or $\mu \circ \phi \circ \mu^{-1}$ satisfies (2D) in Theorem \ref{2class}, then we take $\alpha_1 = 0$ and $\alpha_2 = \infty$, giving a $3$-cycle $C \mapsto \infty \mapsto 0 \mapsto C$ ($C \in \C^*$) in 2-ramification structure (3) from Theorem \ref{class1}. 
Observe that $\phi(x) = B \prod_{r \in R}(x-r) \prod_{p \in P} (x-p)^{-1}$, where $B \in \C^*$ and $R$ (resp. $P$) is the set of roots (resp. poles) of $\phi$, with multiplicity. From 2-ramification structure (3) we have that all roots of $\phi$ except $\infty$, and all poles of $\phi$ except $C$, occur to even multiplicity. Hence
\begin{equation} \label{firstfcn}
\phi(x) = B\frac{f(x)^2}{(x-C)g(x)^2},
\end{equation}
for $f, g \in \C[x]$ monic with $\deg g \geq \deg f$ and $\gcd(f(x), (x-C)g(x)) = 1$. From Theorem \ref{fieldthm} we have $B(x - C) \in K[x]$ and $f/g \in K(x)$, and hence $B, C \in K^*$ and by the remark before Theorem \ref{fieldthm} we have $f, g \in K[x]$. 

Subtracting $C$ from both sides of \eqref{firstfcn} and doing some algebra yields 
\[
\phi(x) - C = \frac{B}{(x-C)g(x)^2} (f(x)^2 -  (C/B)(x-C)g(x)^2).
\]
Because $0$ is the only preimage of $C$ under $\phi$ with odd ramification index, we must have
\begin{equation} \label{conic2}
f(x)^2 -  (C/B)(x-C)g(x)^2 = bxh(x)^2, \quad b \in \C^*.
\end{equation}  
Because the left-hand side of \eqref{conic2} is in $K[x]$, so also must be $bxh(x)^2$. By Theorem \ref{fieldthm} we have $b \in K$ and $h \in K[x]$. Putting $x = 0$ in \eqref{conic2} gives $-B \in K^2$ (note that $f(0), g(0) \neq 0$ since $\phi(0) \not\in \{0, \infty\}$), and putting $x = C$ then gives $b \in CK^2$ ($f(C) \neq 0$ by assumption, whence $h(C) \neq 0$). Letting $D, E \in K$ satisfy $D^2 = -B$ and $b = CE^2$, we take $f_1(x) = Df(x) \in K[x]$ and $h_1(x) = DEh(x)$ to obtain 
$\phi(x) = -\frac{f_1(x)^2}{(x-C)g(x)^2}$ with $f_1(x)^2/D^2 - (C/B)(x-C)g(x)^2 = bxh_1(x)^2/(DE)^2$, i.e., $f_1(x)^2 +  C(x-C)g(x)^2 = Cxh_1(x)^2$. Writing $f$ for $f_1$ and $h$ for $h_1$, we have obtained the form (5a) in Theorem \ref{maingenus}. Note that $Cxh(x)^2$ has odd degree, and so we must have $\deg g \geq \deg f$, and hence we do not need to make this stipulation separately.

If $m= 2$ and one of $\phi$ or $\mu \circ \phi \circ \mu^{-1}$ satisfies (2F) in Theorem \ref{2class}, then we take $\alpha_1 = 0$ and $\alpha_2 = \infty$, giving a $2$-cycle $C \mapsto 0 \mapsto C$ ($C \in \C^*$) in 2-ramification structure (2) from Theorem \ref{class1}. If one of $\phi$ or $\mu \circ \phi \circ \mu^{-1}$ satisfies (2M) in Theorem \ref{2class}, then we take $\alpha_1 = 0$ and $\alpha_2 = \infty$, so that $\infty$ and $C$ are the preimages of $0$ having odd multiplicity. If one of $\phi$ or $\mu \circ \phi \circ \mu^{-1}$ satisfies (2O) in Theorem \ref{2class}, then we take $\alpha_1 = 0$ and $\alpha_2 = \infty$, so that $0$ is a fixed point in 2-ramification structure (7) of Theorem \ref{class2} with unique non-zero preimage $C$ of odd multiplicity. In each case, we argue as in case (2D) above to show that $\phi$ has form (5b), (5c), or (5d), respectively, and that \eqref{kpart2} holds. We leave the details to the reader. 

We now prove the `only if' part of the theorem. Suppose that $\phi$ satisfies one of conditions (1)-(5). We show that $0$ and $\infty$ are $m$-branch abundant for $\phi$, which by Corollary \ref{rhocor} shows that $g_n$ is bounded as $n \to \infty$. If $\phi$ satisfies condition (1), the desired conclusion follows from Proposition \ref{triv}. If $\phi$ satisfies conditions (2)-(4), then $\phi$ is Latt\`es with $r(0) = r(\infty) = m$, where $r$ is the function in Theorem \ref{milnor2}. It follows from the definition of $r$ that for each $n \geq 1$ we have 
$$\text{$e_\phi(z) = m$ for all $z \in \phi^{-n}(0) \setminus \text{Postcrit}(\phi)$}.$$
By Theorem \ref{milnor2}, $\text{Postcrit}(\phi)$ has at most four elements, and thus in the notation of Definition \ref{rhodef}, we have that $\rho_n(0) \leq 4$ for all $n$. Hence $0$ is $m$-branch abundant for $\phi$, and an identical argument shows the same conclusion for $\infty$.  If $\phi$ satisfies one conditions (5a)-(5d), then by construction $0$ and $\infty$ are $2$-branch abundant points for $\phi$.  
\end{proof}

We now discuss the parameterizations of maps in cases (5a) - (5d) in Theorem \ref{maingenus} mentioned in the introduction.  We begin with a detailed analysis of the case (5a). 
\begin{proposition} \label{paramprop}
Let $f,g \in \C[x]$ and $C \in \C \setminus \{0\}$. The following are equivalent:
\begin{enumerate}
\item $\gcd(f,g) = 1$, $f(C) \neq 0$, and $f(x)^2 + C(x-C)g(x)^2 = Cxh(x)^2$ for some $h(x) \in \C[x]$.
\item There exist $P, Q \in \C[x]$ satisfying $\gcd(P,Q) = 1,$ $Q(C) \neq 0$, $CP(0) \neq Q(0)$, 
\begin{align*} f(x) & = C^2P(x)^2(x-C) - 2CP(x)Q(x)(x-C) - CQ(x)^2, \; \text{and} \\
g(x) & = -CP(x)^2(x-C) - 2CP(x)Q(x) + Q(x)^2.
\end{align*}
\end{enumerate} 
\end{proposition}

\begin{proof}
Given $P$ and $Q$ as in (2), one checks that $f(x)^2 + C(x-C)g(x)^2 = Cxh(x)^2$ for $h(x) = Q(x)^2 + CP(x)^2(x-C)$. The assumption that $Q(C) \neq 0$ implies that $f(C) \neq 0$. Moreover, $f(x) + Cg(x) = -2CxP(x)Q(x)$, and so if $r \in \C$ is a common root of $f$ and $g$, then $r = 0$, $P(r) = 0$, or $Q(r) = 0$. Observe that $f(0) = -C(CP(0) - Q(0))^2$, and the assumption that $CP(0) \neq Q(0)$ forces $f(0) \neq 0$. Hence $r \neq 0$. If $P(r) = 0$, then $0 = f(r) = -CQ(r)^2$, contradicting $\gcd(P,Q) = 1$. If $Q(r) = 0$, then $r \neq C$ by assumption, and so $0 = f(r) = C^2P(r)(r-C)$ also contradicts $\gcd(P,Q) = 1$. It follows that $\gcd(f,g) = 1$. 

Given $f,g$ as in (1), observe that $f(x)^2 + C(x-C)g(x)^2 = Cxh(x)^2$ is equivalent to $(f(x)/h(x))^2 + C(x-C)(g(x)/h(x))^2 = Cx$. We thus look for solutions $\alpha, \beta \in \C(x)$ to the equation $\alpha(x)^2 + C(x-C)\beta(x)^2 = Cx$. Clearly $\alpha(x) = C$ and $\beta(x) = 1$ is one such solution, and because our equation is a conic, we use projection to find all other solutions. Letting $s$ and $t$ be variables and $\gamma$ an undetermined constant in $\C(x)$, the line $s = \gamma t + (1-C\gamma)$ passes through $(C,1)$. Substituting $\beta = \gamma \alpha + (1-C\gamma)$ into our conic and dividing through by $\alpha - C$ gives the solution 
$$
\alpha = \frac{C^2\gamma^2(x-C) - 2C\gamma(x-C) - C}{1 + C\gamma^2(x-C)}
$$
We then use $\beta = \gamma \alpha + (1-C\gamma)$ to obtain $\beta = \frac{-C\gamma^2(x-C) - 2C\gamma + 1}{1 + C\gamma^2(x-C)}$. Writing $\gamma(x) = P(x)/Q(x)$ with $\gcd(P,Q) = 1$ and clearing denominators gives the expressions for $f$ and $g$ in part (2) of the Proposition. Because $f(C) \neq 0$, we must have $Q(C) \neq 0$. We must also have $CP(0) \neq Q(0)$, for otherwise $f(0) = g(0) = 0$, contradicting $\gcd(f,g) = 1$. 
\end{proof}

For maps of the form (5b), a similar analysis gives the parameterization 
\begin{align*} f(x) & = CP(x)^2 - 2CP(x)Q(x) - (x-C)Q(x)^2, \\
g(x) & = -CP(x)^2 - 2(x-C)P(x)Q(x) + (x-C)Q(x)^2.
\end{align*}
Taking $C = -4$, $Q(x) = 2$, and $P(x) = 1$ gives $f(x) = -4(x+1)$ and $g(x) = 4$, leading to $\phi(x) = -(x+4)(x+1)^2$, which is $-(T_3(x+2)) + 2$, one of the maps mentioned in the paragraph following Theorem \ref{maingenus}. 

As for maps of the form (5c), note that $B(x-C)f(x)^2 - Cg(x)^2 = -Ch(x)^2$ is equivalent to $(B/C)(x-C)f(x)^2 = (g(x) + h(x))(g(x) - h(x))$. From $\gcd(f,g) = 1$ it follows that $\gcd(g,h) = 1$, and so $\gcd(g+h, g-h) = 1$. Thus one of $g+h, g-h$ is a square in $\C[x]$ while the other is $(x-C)$ times a square, and the squares multiply to $f(x)$. It follows that 
\begin{align*}
g(x) & = a(x-C)P(x)^2 + bQ(x)^2 \\
f(x) & = P(x)Q(x) &
\end{align*}
for some $P, Q \in \C[x]$ with $ab = B/4C$ and $\gcd((x-C)P(x),Q(x)) = 1$. Clearly any such $P, Q$ give a solution to $B(x-C)f(x)^2 - Cg(x)^2 = -Ch(x)^2$. 

Maps of the form (5d) may be handled with a similar analysis, though there are two cases: when one of $g+h, g-h$ is a square in $\C[x]$ and the other is $x(x-C)$ times a square; and when one is $x$ times a square and the other is $(x-C)$ times a square.

\section{Proof of Theorem \ref{iterative relationship}} \label{pfmain}

As a stepping stone to proving Theorem \ref{iterative relationship}, we give a useful result on $m$-trivial maps. We require first some notation. 
For an integer $n \geq 1$, let $P_n$ be the (possibly empty) set of primes dividing $n$. Fix an integer $j \geq 1$, and let $n \geq 1$ satisfy $P_n \subseteq P_j$. Define $w_j(n)$ to be the smallest nonnegative exponent $\ell$ such that $n \mid j^\ell$ . More explicitly, if $n = p_1^{e_1}p_2^{e_2} \cdots p_k^{e_k}$, and $f_i = v_{p_i}(j)$ for $i = 1, \ldots, k$, where $v_{p_i}$ denotes the $p_i$-adic valuation, then $w_j(n) = \max_i \lceil (e_i/f_i) \rceil$.  For relatively prime integers $a, b \geq 1$, we denote the order of $a$ in $(\Z/b\Z)^*$ by $\ord(a \bmod{b})$. 

\begin{lemma} \label{trivmain}
Let $m \geq 2$, let $K$ be a subfield of $\C$, let $\phi \in K(x)$ have degree $d \geq 2$, and assume that $\phi$ is $m$-trivial with respect to $\{0, \infty\}$. Let $\phi(x) = cx^j(\psi_0(x))^m$ as in Proposition \ref{triv}, and let $g_n$ be as in the discussion preceding Theorem \ref{iterative relationship}. Then $g_n = 0$ for all $n \geq 1$ and there exist integers $r > s \geq 0$ such that 
\begin{equation} \label{rseqn}
\text{$\phi^r(x) = \phi^s(x)(\psi(x))^m$ for some $\psi \in K(x)$.}
\end{equation}
When $j = 0$, \eqref{rseqn} holds if and only if $s \geq 1$. When $j > 0$, let $t$ be the minimal positive integer with $c^t \in K^m$, and let $m'$ (resp. $t'$) be the maximal divisor of $m$ (resp. $t$) relatively prime to $j$. Then \eqref{rseqn} holds if and only if 
\begin{equation} \label{values}
s \geq w_j(m/m') \qquad \text{and} \qquad \begin{cases} t' \mid (r-s) & \text{if $j = 1$} \\ \lcm [\ord(j \bmod{m'}), \ord(j \bmod{t'(j-1)})] \mid (r-s) & \text{if $j > 1$} \end{cases} 
\end{equation}
In all cases there exists $r \leq m$ such that \eqref{rseqn} holds. 
\end{lemma}

\begin{proof}
Because $\phi \in K(x)$, we may apply Theorem \ref{fieldthm} to conclude that $\psi_0 \in K(x)$ and $c \in K^*$. We describe the image of $\phi^n$ in $K(x)^*/K(x)^{*m}$ for all $n \geq 1$.  
Because $\phi(x) \equiv c x^j \pmod{K(x)^{*m}}$, we have 
\begin{equation} \label{explicit}
\phi^n(x) \equiv c^{1 + j + \cdots + j^{n-1}} x^{j^n} \pmod{K(x)^{*m}}
\end{equation}
for $n \geq 1$. It follows immediately from Proposition \ref{genus} that $g_n = 0$ for all $n \geq 1$. Note that if $j = 0$ then \eqref{explicit} gives $\phi^r(x) \equiv \phi(x)$ for all $r \geq 1$, and we may take $r = 2 \leq m$. 

Assume for the rest of the proof that $j \geq 1$. We now show that \eqref{rseqn} holds if and only if \eqref{values} does. It follows from \eqref{explicit} that 
$\phi^r(x) \equiv \phi^s(x) \pmod{K(x)^{*m}}$ for $r > s \geq 0$ is equivalent to  $x^{j^r} \equiv x^{j^s} \pmod{K(x)^{*m}}$ and $c^{1 + j + \cdots + j^{r-1}} \in K^{*m}$ (if $s = 0$) or
$c^{1 + j + \cdots + j^{r-1}} \equiv c^{1 + j + \cdots + j^{s-1}} \pmod{K^{*m}}$ (if $s \geq 1$). This in turn is equivalent to: 
\begin{align}
& j^r   \equiv j^s   \pmod{m} \label{one} \quad \text{and} \\
& j^s + \cdots + j^{r-1}   \equiv 0  \pmod{t} \label{too} 
\end{align}
Now \eqref{one} holds if and only if $m \mid j^s(j^{r-s} - 1)$. Observe that $\gcd(m', j) = 1$ implies that $\gcd(m', j^s) = 1$, and hence $m' \mid (j^{r-s} - 1)$. This holds if and only if $\ord(j \bmod{m'}) \mid r-s$. Moreover, every prime dividing $m/m'$ also divides $j$, and so we have that $m/m'$ and $(j^{r-s} - 1)$ are relatively prime, whence $(m/m') \mid j^s$. By the definition of $w_j$, this holds if and only if $s \geq w_j(m/m')$. Similarly, \eqref{too} holds if and only if $t \mid j^s(1 +  \cdots + j^{r-s-1})$, and as above this is equivalent to $t' \mid (1 +  \cdots + j^{r-s-1})$ and $(t/t') \mid j^s$. The former is equivalent to $t' \mid r-s$ (if $j = 1$) and $t'(j-1) \mid (j^{r-s} - 1)$, i.e., $\ord(j \bmod{t'(j-1)}) \mid r-s$ (if $j > 1$). Note that the minimality of $t$ implies that $t \mid m$, and so $t' \mid m'$ and $(t/t') \mid (m/m')$. Hence $(t/t') \mid j^s$ is implied by $(m/m') \mid j^s$.  

It remains to show that there exists $r \leq m$ such that \eqref{rseqn} holds. From \eqref{values}, taking 
\[s = w_j(m/m') \qquad \text{and} \qquad r = \begin{cases} s + t' & \text{if $j = 1$} \\ s+ \lcm [\ord(j \bmod{m'}), \ord(j \bmod{t'(j-1)})] & \text{if $j > 1$} \end{cases} 
 \]
satisfies \eqref{rseqn}, and so it is enough to show that 
$$w_j(m/m') + \lcm [\ord(j \bmod{m'}), \ord(j \bmod{t'(j-1)})] \leq m \qquad \text{and} \qquad w_j(m/m') + t' \leq m.
$$ Because $t' \mid m'$, we have that both $\ord(j \bmod{t'(j-1)})$ and $\ord(j \bmod{m'})$ divide $\ord(j \bmod{m'(j-1)})$. But $j$ belongs to the subgroup
$$
\{ g \in (\Z/m'(j-1)\Z)^* : g \equiv 1 \pmod{(j-1)}\},
$$ 
which has at most $m'$ elements, whence $\ord(j \bmod{m'(j-1)}) \leq m'$, and so 
\[
\lcm [\ord(j \bmod{m'}), \ord(j \bmod{t'(j-1)})] \leq m'.
\]
Hence it suffices in both the $j > 1$ and $j = 1$ cases to show that $w_j(m/m') + m' \leq m$. If $m = m'$, then $w_j(m/m') = 0$, and we are done. If $m \neq m'$, then write $m/m' = p_1^{e_1} \cdots p_k^{e_k}$, with $k \geq 1$. Let $e_\ell = \max_i e_i$, write $e = e_\ell$ and $p = p_\ell$, and note that $m' \leq m/p^e$. Hence we must show $e + (m/p^{e}) \leq m$. But $p^{e} \mid m$ and $e \geq 1$, and so $1 + e \leq p^{e} \leq m$. Using $e/(p^{e}-1) \leq 1$ gives $e/(p^{e}-1) + e \leq m$, and dividing by $e$ and combining terms gives $p^{e}/(p^{e}- 1) \leq m/e$. Taking reciprocals gives $1 - (1/p^e) \geq e/m$, which gives $m \geq e + (m/p^e)$, as desired. 
\end{proof}

Before proving Theorem \ref{iterative relationship}, we give one more preliminary result that will aid in our analysis. 

\begin{lemma} \label{multiplier}
Let $K$ be a subfield of $\C$, let $\phi \in K(x)$ be a Latt\`es map satisfying $\phi(\infty) = \infty$, and write $\phi(x) = Mf(x)/g(x)$ with $f, g \in K(x)$ monic. If $\phi$ has signature (2,4,4) and $r(\infty) = 4$, where $r$ is the function in Theorem \ref{milnor2}, then $M^2 \in K^4$. If $\phi$ has signature (3,3,3) and $r(\infty) = 3$, then $M \in K^3$. 
\end{lemma}

\begin{proof} 
By definition, $r(\phi(z)) = e_\phi(z)r(z)$ for all $z \in \PP^1(\C)$. By assumption, $\phi(\infty) = \infty$ and $r(\infty) \neq 0$, and hence we must have $e_\phi(\infty) = 1$. Therefore $\deg f = 1 + \deg g$.
Now the multiplier $\lambda_\infty(\phi)$ of $\phi$ at $\infty$ is defined to be $(1/\phi(1/x))'$ evaluated at $x = 0$ (see \cite[Exercise 1.13]{jhsdynam}). Because $\deg f = 1 + \deg g$, one easily deduces that $\lambda_\infty(\phi) = 1/M$. Put $n_\phi = 3$ if $\phi$ has signature (3,3,3), and $n_\phi = 4$ if $\phi$ has signature (2,4,4), and recall from Theorems \ref{milnorlattes} and \ref{milnor2} that $\phi$ is a finite quotient of a linear map $L : \C / \Lambda \to \C / \Lambda$, where $\Lambda \subset \C$ is a lattice. 
By \cite[Corollary 3.9]{milnor}, the multiplier at any fixed point $z_0$ of $\phi$ has the form $(\omega a)^{r(z_0)}$, where $\omega^{n_\phi} = 1$. Hence $M$ is of the form 
$a^{n_\phi}$. From \cite[Theorem 5.1]{milnor}, we have $a\Lambda \subset \Lambda$ and $\zeta_{n_\phi} \Lambda = \Lambda$, where $\zeta_{n_\phi}$ is a primitive $(n_\phi)th$ root of unity. It follows that $a \in \Lambda$ and $\Lambda = \Z[i]$ if $n_\phi = 4$ and $\Lambda = \Z[e^{2\pi i/3}]$ if $n_\phi = 3$. Therefore $[K(a) : K] \leq 2$, and hence $[K(M^{1/n_\phi}) : K] \leq 2$. Because $n_\phi \geq 3$, this implies $x^{n_\phi} - M$ is reducible over $K$, and by a well-known theorem (e.g. \cite[Theorem 8.1.6]{karpilovsky}), it follows that either $n_\phi = 3$ and $M \in K^3$ or $n_\phi = 4$ and one of $M \in K^2$ or $M \in -4K^4$ holds. In either of the cases for $n_\phi = 4$ we have $M^2 \in K^4$, which proves the lemma.
\end{proof}

\begin{proof}[Proof of Theorem \ref{iterative relationship}]
Fix $m \geq 2$, let $K$ be a subfield of $\C$, let $\phi \in K(x)$ have degree $d \geq 2$, and let $g_n$ be defined as in the discussion before Theorem \ref{iterative relationship}.  
By Corollary \ref{rhocor} it suffices to show that $0$ and $\infty$ are $m$-branch abundant points for $\phi$ if and only if $\phi^r(x) = \phi^s(x)$ in $K(x)^*/K(x)^{*m}$ for $r > s \geq 0$ with $r \leq m$ if $m \geq 3$ and $r \leq 6$ if $m = 2$. One direction is easy: if there are $r$ and $s$ satisfying the requisite properties, then for all $n \geq r$ we have $\phi^n(x) = \phi^j(x)$ in $K(x)^*/K(x)^{*m}$ for some $j \in \{0, \ldots, r-1\}$, and hence all $z \in \phi^{-n}(0)$ with $m \nmid e_{\phi^n}(z)$ lie in the set $\bigcup_{j = 0}^{r-1} \phi^{-j}(0)$, which is independent of $n$. Hence $0$ is $m$-branch abundant for $\phi$. Observe that 
\begin{equation} \label{invariance}
\text{$\phi^r(x) = \phi^s(x)(\psi(x))^m$ implies $\phi_1^r(x) = \phi_1^s(x)(\psi_1(x))^m$,} 
\end{equation} 
where $\phi_1(x) = 1/\phi(1/x)$ and $\psi_1(x) = 1/\psi(1/x)$; note in particular that if $\psi \in K(x)$ then $\psi_1 \in K(x)$. Because $\phi_1^{-n}(0) = \phi^{-n}(\infty)$, we have that $\infty$ is also $m$-branch abundant for $\phi$. 

Assume henceforth that $0$ and $\infty$ are $m$-branch abundant; we will show there exist $r$ and $s$ as described in the previous paragraph.  From \eqref{invariance} and the remark following, it suffices to show that for all $\phi$, the desired conclusion holds for either $\phi$ or $\mu \circ \phi \circ \mu^{-1}$, where $\mu(x) = 1/x$. 

If $\phi$ is $m$-trivial with respect to $\{0, \infty\}$, then the desired conclusion follows from Lemma \ref{trivmain}. If $\phi$ is not $m$-trivial with respect to $\{0, \infty\}$, then $m \leq 4$ by Theorem \ref{mge5}, and $O_\phi^-(0)$ and $O_\phi^-(\infty)$ are described in one of Theorems \ref{2class}, \ref{3class}, or \ref{main4thm}, according to whether $m = 2, 3,$ or $4$. We consider each of these cases separately. 

\smallskip

\noindent \textbf{Case 1:  $m = 4$.}
If $O_\phi^-(\alpha_1)$ has $4$-ramification structure (13) for either $\phi(x)$ or $1/\phi(1/x))$, then we take $\alpha_1 = \infty$ and $\alpha_2 = 0$, and we let $\beta$ be the unique preimage of $0$ with ramification index 2. Then 
\begin{equation*} 
\phi(x) = M\frac{(x - \beta)^2f(x)^4}{xg(x)^4} 
\end{equation*}
where the numerator and denominator are relatively prime, $f$ and $g$ are monic, 
and $M \in \C^*$. As with the function in \eqref{firstfcn}, we use Theorem \ref{fieldthm} to conclude that $M \in K$ and $f, g,$ and $(x - \beta)^2$ are all in $K[x]$. Therefore $\phi(x) \equiv M(x - \beta)^2/x \pmod{K(x)^{*4}}$, and hence 
\begin{equation} \label{phieqn}
\phi^2(x) \equiv M(\phi(x) - \beta)^2/\phi(x) \pmod{K(x)^{*4}}.
\end{equation} 
Note that $(x - \beta)^2 \in K[x]$ implies $2\beta \in K$, and so $\beta \in K$.  Now, 
\begin{equation*} 
\phi(x) - \beta = \frac{1}{xg(x)^4}[M(x - \beta)^2f(x)^4 - \beta xg(x)^4].
\end{equation*}
Let $u(x) = M(x - \beta)^2f(x)^4 - \beta xg(x)^4$. The roots (with multiplicity) of $u$ are the preimages (with multiplicity) of $\beta$ under $\phi$, and hence $u(x) = bh(x)^2$, with $h \in \C[x]$ monic and $b \in \C \setminus \{0\}$.  Applying Theorem \ref{fieldthm} again, 
we have $b \in K$ and $h \in K[x]$. This shows that $\phi(x) - \beta \in \frac{b}{x}K(x)^{*2} = bxK(x)^{*2}$, and so $(\phi(x) - \beta)^2 \in  b^2x^2K(x)^{*4}$. Now $bh(\beta)^2 = u(\beta) = -\beta^2g(\beta)^4$, and because $g(\beta) \neq 0$ (otherwise $\phi(\beta) \neq 0$, contrary to supposition), we have $-b \in K^2$, and squaring gives $b^2 \in K^4$. Therefore $(\phi(x) - \beta)^2 \in x^2K(x)^{*4}$. Similarly, putting $x = 0$ in $u(x)$ yields $Mb \in K^2$, and hence $M^2 \in K^4$ (one could also use Lemma \ref{multiplier} to derive this latter fact). 
Returning to \eqref{phieqn} now gives
$$
\phi^2(x) \equiv M\frac{(\phi(x) - \beta)^2}{\phi(x)} \equiv x^2 \cdot \frac{x}{(x - \beta)^2} \equiv x^3(x-\beta)^2 \pmod{K(x)^{*4}}. 
$$
Thus, modulo $K(x)^{*4}$, we have $\phi^3(x) \equiv (\phi(x))^3 (\phi(x) - \beta)^2 \equiv M^3(x-\beta)^2/x \equiv \phi(x)$, where the last equivalence follows because $M^2 \in K^4$.  
Hence \eqref{itreleqn} holds with $r = 3$ and $s = 1$, and from Proposition \ref{genus} we have $g_n = 1$ for all $n \geq 1$. 

If $O_\phi^-(\alpha_1)$ has $4$-ramification structure (14) for either $\phi(x)$ or $1/\phi(1/x))$, then we take $\alpha_1 = \infty$ and $\alpha_2 = 0$, and we let $\beta$ be the unique preimage of $\infty$ with ramification index 2. Then 
$$\phi(x) = \frac{M f(x)^4}{(x(x-\beta)^2g(x)^4)} \qquad \text{and} \qquad \phi(x) - \beta = \frac{u(x)}{(x(x-\beta)^2g(x)^4)},$$ 
with 
\begin{equation} \label{ux}
u(x) := Mf(x)^4 -  \beta x(x-\beta)^2g(x)^4 = bh(x)^2.
\end{equation} Taking $x = \beta$ or $x = 0$ in \eqref{ux} yields $b/M \in K^2$, and thus $b^2M^2 \in K^4$, but no further information. However, by Corollary \ref{lattescor} we have that $\phi$ is Latt\`es of signature (2,4,4) with $r(\infty) = 4$, and so from Lemma \ref{multiplier} we get $M^2 \in K^4$, whence $b^2 \in K^4$. One now obtains $\phi^2(x) \equiv x^3(x-\beta)^2 \pmod{K(x)^{*4}}$ and $\phi^3(x) \equiv \phi(x) \pmod{K(x)^{*4}}$ using an argument virtually identical to the previous case. The same conclusions about $r$, $s$, and $g_n$ hold. 

\begin{remark} The same general template as in the $m = 4$ case is applied to further cases below, and we omit certain details. For example, Theorem \ref{fieldthm} is frequently applied in subsequent cases to show that relevant polynomials and constants are defined over $K$. Hence \textit{from now on we assume that $f$ and $g$ are monic relatively prime polynomials with coefficients in $K$, and that $b, b_1, b_2 \in K$ and $h, h_1, h_2 \in K[x]$ are monic}. 
\end{remark}

\noindent \textbf{Case 2:  $m = 3$.}
We invoke Theorem \ref{3class} with $\mu(x) = 1/x$. 

If either $\phi(x)$ or $1/\phi(1/x)$ satisfies (3A), then we take $\alpha_1 = \infty$ and $\alpha_2 = 0$, and let $\gamma$ be the unique preimage of $0$ with ramification index 1. Hence
$$\phi(x) = \gamma \frac{(x-\gamma)f(x)^3}{xg(x)^3}, \quad \phi(x) - \gamma = \frac{u(x)}{xg(x)^3},$$ where $u(x) := \gamma(x-\gamma)f(x)^3 -  \gamma xg(x)^3 = bh(x)^3$ and the initial $\gamma$ in $\phi$ is because $\phi(\infty) = \gamma$. Putting $x = 0$ in $u(x)$ gives $-b/\gamma^2 \in K^3$, and so $b \gamma \in K^3$, implying that $\phi(x) - \gamma \in \gamma^2 x^2K(x)^{*3}$. 
It is then straightforward to check that 
$
\phi^2(x) 
\equiv \gamma^2(x-\gamma)^2 \pmod{K(x)^{*3}},
$ and $\phi^3(x) \equiv x \pmod{K(x)^{*3}}$. Hence \eqref{itreleqn} holds with $r = 3$ and $s = 0$, and from Proposition \ref{genus} we have $g_n = 0$ for all $n \geq 1$.  

If either $\phi(x)$ or $1/\phi(1/x)$ satisfies (3B), then we take $\alpha_1 = 0$ and $\alpha_2 = \infty$, and let $\gamma$ be the unique preimage of $0$ with ramification index 1. 
Writing $\phi(x) = M(x-\beta)f(x)^3/g(x)^3$ and arguing as in the previous case, one obtains $\phi^2(x) \in xK(x)^{*3}$. 
Thus \eqref{itreleqn} holds with $r = 2$ and $s = 0$, and $g_n = 0$ for all $n \geq 1$.

If either $\phi(x)$ or $1/\phi(1/x)$ satisfies (3C), then we take $\alpha_1 = \infty$ and $\alpha_2 = 0$, and let $\beta$ be the unique element of $\phi^{-1}(\infty) \setminus \{0, \infty\}$ with ramification index 1. Then
$
\phi(x) = M f(x)^3/(x(x-\beta)g(x)^3)$ and $\phi(x) - \beta = u(x)/(x(x-\beta)g(x)^3)$
with $u(x) :=  Mf(x)^3 -  \beta x(x-\beta)g(x)^3 = bh(x)^3$. Putting $x = 0$ or $x = \beta$ gives $b^2M \in K^3$ but no further information. However, by Corollary \ref{lattescor} we have that $\phi$ is Latt\`es of signature (3,3,3) with $r(\infty) = 3$, and so from Lemma \ref{multiplier} we get $M \in K^3$, whence $b \in K^3$. One now easily calculates
$
\phi^2(x) \equiv x^2(x-\beta)^2 \equiv \phi(x) \pmod{K(x)^{*3}}$. Thus \eqref{itreleqn} holds with $r = 2$ and $s = 1$, and from Proposition \ref{genus} we have $g_n = 1$ for all $n \geq 1$.

\smallskip

\noindent \textbf{Case 3:  $m = 2$.}
We invoke Theorem \ref{2class} with $\mu(x) = 1/x$. 

If either $\phi(x)$ or $1/\phi(1/x)$ satisfies (2A), then we take $\alpha_1 = \infty$ and $\alpha_2 = 0$, and let $\gamma$ be the unique preimage of $0$ with multiplicity 1, and $\delta$ be the unique preimage of $\gamma$ with multiplicity 1. Thus 
$$
\phi(x) = \delta \frac{(x-\gamma)f(x)^2}{xg(x)^2}, \quad \phi(x) - \gamma = \frac{u_1(x)}{xg(x)^2}, \quad \phi(x) - \delta = \frac{u_2(x)}{xg(x)^2},
$$
where 
$$u_1(x) := \delta (x-\gamma)f(x)^2 - \gamma xg(x)^2 = b_1(x-\delta)h_1(x)^2 \quad \text{and} \quad u_2(x) := \delta (x-\gamma)f(x)^2 - \delta xg(x)^2 = b_2h_2(x)^2.$$ 
Taking $x = 0$ in $u_1(x)$ yields $b_1 \in \gamma K^2$, and taking $x = 0$ in $u_2(x)$ gives $b_2 \in -\delta \gamma K^2$. 
Then one calculates 
$\phi^2(x) \equiv 
 \gamma (x-\gamma) (x - \delta) \pmod{K(x)^{*2}}$, $\phi^3(x) \equiv -\gamma \delta (x-\delta) \pmod{K(x)^{*2}}$, and $\phi^4(x) \equiv x \pmod{K(x)^{*2}}$, so that \eqref{itreleqn} holds with $r = 4$ and $s = 0$, and $g_n = 0$ for all $n \geq 1$. 

If either $\phi(x)$ or $1/\phi(1/x)$ satisfies (2B), then we take $\alpha_1 = \infty$ and $\alpha_2 = 0$, and let $\gamma$ be the unique preimage of $\infty$ with multiplicity 1, and $\delta$ be the unique preimage of $0$ with multiplicity 1. Arguing as in the (2A) case gives $\phi(x) \equiv \delta (x-\gamma)(x-\delta) \pmod{K(x)^{*2}} , \phi^2(x) \equiv  \gamma \delta x \pmod{K(x)^{*2}},  \phi^3(x) \equiv \gamma (x-\gamma)(x-\delta) \pmod{K(x)^{*2}},$ and $\phi^4(x) \equiv x \pmod{K(x)^{*2}}$. Hence \eqref{itreleqn} holds with $r = 4$ and $s = 0$, and $g_n = 0$ for all $n \geq 1$. 

If either $\phi(x)$ or $1/\phi(1/x)$ satisfies (2C), then we take $\alpha_1 = 0$ and $\alpha_2 = \infty$, and let $\beta$ be the unique preimage of $0$ with odd multiplicity, and $\gamma$ be the unique preimage of $\beta$ with odd multiplicity. Thus
$$
\phi(x) = \frac{M(x-\beta)f(x)^2}{g(x)^2}, \quad \phi(x) - \beta = \frac{u_1(x)}{g(x)^2}, \quad \phi(x) - \gamma = \frac{u_2(x)}{g(x)^2},
$$
where 
$$u_1(x) := M(x-\beta)f(x)^2 - \beta g(x)^2 = b_1(x-\gamma)h_1(x)^2, \quad u_2(x) := M(x-\beta)f(x)^2 - \gamma g(x)^2 = b_2xh_2(x)^2.$$  Substituting $x = \gamma$ and $x = \beta$ into $u_1(x)$ 
gives $\beta M (\gamma - \beta) \in K^2$ and $b_1\beta M (\gamma - \beta) \in K^2$, respectively. Hence $b_1 \in K^2$. Similar reasoning using $u_2$ gives $b_2 \in K^2$.  
We now have the following equivalencies modulo $K(x)^{*2}$: 
$
\phi(x) \equiv M(x-\beta), \phi^2(x) \equiv M(x - \gamma), \phi^3(x) \equiv Mx, \phi^4(x) \equiv (x-\beta), \phi^5(x) \equiv (x - \gamma),$ and $\phi^6(x) \equiv x$, showing that \eqref{itreleqn} holds with $r = 6$ and $s = 0$, and $g_n = 0$ for all $n \geq 1$.

If either $\phi(x)$ or $1/\phi(1/x)$ satisfies (2D), then we take $\alpha_1 = 0$ and $\alpha_2 = \infty$ and let $C$ be the unique preimage of $\infty$ with odd multiplicity. Then \eqref{firstfcn} and \eqref{conic2} give $\phi^2(x) \equiv Cx(x-C) \pmod{K(x)^{*2}},$ and  $\phi^3(x) \equiv 
Cbx \equiv x \pmod{K(x)^{*2}}$, showing that \eqref{itreleqn} holds with $r = 3$ and $s = 0$, and $g_n = 0$ for all $n \geq 1$.  

If either $\phi(x)$ or $1/\phi(1/x)$ satisfies (2E), then we take $\alpha_1 = \infty$, $\alpha_2 = 0$, $\beta_1$ to be the unique preimage of $\infty$ with odd ramification index, and $\beta_2$ to be the unique preimage of $0$ with odd ramification index. 
This gives
$$
\phi(x) = \beta_1 \frac{(x-\beta_2)f(x)^2}{(x- \beta_1)g(x)^2}, \quad \phi(x) - \beta_1 = \frac{u_1(x)}{(x-\beta_1)g(x)^2}, \quad \phi(x) - \beta_2 = \frac{u_2(x)}{(x-\beta_1)g(x)^2},
$$
where 
\begin{align*}
u_1(x) & := \beta_1(x-\beta_2)f(x)^2 - \beta_1(x- \beta_1)g(x)^2 = b_1h_1(x)^2, \\ 
u_2(x) & := \beta_1(x-\beta_2)f(x)^2 - \beta_2 (x-\beta_1)g(x)^2 = b_2xh_2(x)^2.
\end{align*}
Taking $x = \beta_1$ in $u_1(x)$ and $u_2(x)$ gives $b_1\beta_1(\beta_1 - \beta_2) \in K^2$ and $b_2\beta_1(\beta_1 - \beta_2) \in K^2$, which together imply $b_1b_2 \in K^2$. It is now straightforward to check that $\phi^2(x) \equiv b_1b_2x \equiv x \pmod{K(x)^{*2}}$, whence \eqref{itreleqn} holds with $r = 2$ and $s = 0$, and $g_n = 0$ for all $n \geq 1$.

If either $\phi(x)$ or $1/\phi(1/x)$ satisfies (2F), then we take $\alpha_1 = \infty$ and $\alpha_2 = 0$, and we let $C$ be the unique preimage of $0$ with odd ramification index. Then
$
\phi(x) = B(x-C)f(x)^2/g(x)^2$ and $\phi(x) - C = u(x)/g(x)^2
$
with $B, C \in K^*$ and $u(x) := B(x-C)f(x)^2 -  Cg(x)^2 = bxh(x)^2$.  
Putting $x = 0$ in $u(x)$ gives $-B \in K^2$, and putting $x = C$ then gives $b \in K^2$. It easily follows that $\phi^2(x) \equiv x \pmod{K(x)^{*2}}$. Hence \eqref{itreleqn} holds with $r = 2$ and $s = 0$, and $g_n = 0$ for all $n \geq 1$.

If either $\phi(x)$ or $1/\phi(1/x)$ satisfies (2G), then we take $\alpha_1 = \infty$ and $\alpha_2 = 0$, and we let $\beta_1, \beta_2$ be the elements of $\phi^{-1}(\infty) \setminus \{0, \infty\}$ with ramification index 1. Then for $i = 1, 2$ we have 
$$
\phi(x) = M\frac{f(x)^2}{x(x-\beta_1)(x-\beta_2)g(x)^2}, \quad \phi(x) - \beta_i = \frac{u_i(x)}{x(x-\beta_1)(x-\beta_2)g(x)^2},
$$
where 
$$u_i(x) := Mf(x)^2 - \beta_i x(x-\beta_1)(x-\beta_2)g(x)^2 = b_ih_i(x)^2.$$ 
Putting $x = 0$ in $u_i(x)$ gives $Mb_i \in K^2$ for $i = 1, 2$ and multiplying yields $b_1b_2 \in K^2$. One now calculates $\phi^2(x) \equiv x(x-\beta_1)(x-\beta_2) \pmod{K(x)^{*2}}$ and $\phi^3(x) \equiv Mx(x-\beta_1)(x-\beta_2) \equiv \phi(x) \pmod{K(x)^{*2}}$. Hence \eqref{itreleqn} holds with $r = 3$ and $s = 1$, and $g_n = 1$ for all $n \geq 1$.

If either $\phi(x)$ or $1/\phi(1/x)$ satisfies (2H), then we take $\alpha_1 = \infty$ and $\alpha_2 = 0$, we let $\beta$ be the unique element of $\phi^{-1}(\infty) \setminus \{0\}$ with ramification index 1, and we let $\gamma$ be the unique element of $\phi^{-1}(0) \setminus \{\infty\}$ with ramification index 1. Then
$$
\phi(x) = C\frac{(x-\gamma)f(x)^2}{x(x-\beta)g(x)^2}, \quad \phi(x) - \gamma = \frac{u_1(x)}{x(x-\beta)g(x)^2}, \quad \phi(x) - \beta = \frac{u_2(x)}{x(x-\beta)g(x)^2},
$$
where 
\begin{align*}
u_1(x) & := C(x-\gamma)f(x)^2 - \gamma x(x-\beta)g(x)^2 = b_1h_1(x)^2 \\ u_2(x) & := C(x-\gamma)f(x)^2 - \beta x(x-\beta)g(x)^2 = b_2h_2(x)^2.
\end{align*}
Putting $x = 0$ in $u_1(x)$ gives $-C\gamma b_1 \in K^2$, putting $x = \gamma$ in $u_1(x)$ gives $(\beta-\gamma) b_1 \in K^2$ and putting $x = \beta$ in $u_1(x)$ gives $C(\beta - \gamma) b_1 \in K^2$. In particular, $C \in K^2$. Putting $x = 0$ in $u_2(x)$ yields $-C -\gamma b_2 \in  K^2$, and thus $b_1b_2 \in K^2$. Now we obtain $\phi^2(x) \equiv b_1b_2x(x-\beta)(x - \gamma) \equiv \phi(x) \pmod{K(x)^{*2}}$. Hence \eqref{itreleqn} holds with $r = 2$ and $s = 1$, and $g_n = 1$ for all $n \geq 1$.  

If either $\phi(x)$ or $1/\phi(1/x)$ satisfies (2I), then we take $\alpha_1 = \infty$ and $\alpha_2 = 0$, and we let $\beta = \phi(\infty)$, and $\gamma$ the unique element of $\phi^{-1}(\beta) \setminus \{\infty\}$ with ramification index 1. An argument similar to that of case (2H) shows that we have the following equivalences modulo $K(x)^{*2}$: 
$\phi^2(x) \equiv -\beta \gamma (x-\gamma), \phi^3(x) \equiv -\gamma x (x-\beta), \phi^4(x) \equiv (x-\gamma)$, and $\phi^5(x) \equiv \beta x (x-\beta) \equiv \phi(x)$. Thus \eqref{itreleqn} holds with $r = 5$ and $s = 1$, and $g_n = 0$ for all $n \geq 1$. 

If either $\phi(x)$ or $1/\phi(1/x)$ satisfies (2J), then we take $\alpha_1 = \infty$ and $\alpha_2 = 0$, and we let $\beta = \phi(0)$, and let $\gamma$ be the unique element of $\phi^{-1}(\infty) \setminus \{\beta\}$ with ramification index 1. Arguing as in case (2H), we obtain $\phi^2(x) \equiv x \pmod{K(x)^{*2}}$, and thus \eqref{itreleqn} holds with $r = 2$ and $s =  1$, and we have $g_n = 0$ for all $n \geq 1$. 

If either $\phi(x)$ or $1/\phi(1/x)$ satisfies (2K), then we take $\alpha_1 = \infty$ and $\alpha_2 = 0$, and we let $\gamma_1$ and $\gamma_2$ be the two preimages of $0$ with ramification index 1.
Thus
$$
\phi(x) = \frac{M (x-\gamma_1)(x-\gamma_2)f(x)^2}{(xg(x)^2)}, \quad \phi(x) - \gamma_1 = \frac{u_1(x)}{(xg(x)^2)}, \quad \phi(x) - \gamma_2 = \frac{u_2(x)}{(xg(x)^2)},
$$
where $u_i(x) := M(x-\gamma_1)(x-\gamma_2)f(x)^2 - \gamma_i xg(x)^2 = b_ih_i(x)^2$ for $i = 1, 2$. Putting $x = 0$ in $u_i(x)$ yields $M b_i \gamma_1 \gamma_2 \in K^2$, whence $b_1b_2 \in K^2$, and it follows that $(\phi(x) - \gamma_1)(\phi(x) - \gamma_2) \in K(x)^{*2}$. 
One now calculates $\phi^2(x) \equiv x(x-\gamma_1)(x-\gamma_2) \pmod{K(x)^{*2}}$ and $\phi^3(x) \equiv Mx(x-\gamma_1)(x-\gamma_2) \equiv \phi(x) \pmod{K(x)^{*2}}$. Hence \eqref{itreleqn} holds with $r = 3$ and $s = 1$, and $g_n = 1$ for all $n \geq 1$.

If either $\phi(x)$ or $1/\phi(1/x)$ satisfies (2L), then we take $\alpha_1 = \infty$ and $\alpha_2 = 0$, and we let $\beta = \phi(0)$ and $\gamma$ be the non-zero preimage of $\beta$ with ramification index 1. Writing $\phi(x) = M f(x)^2 / ((x-\beta)g(x)^2)$ and arguing as in case (2K), we obtain $\phi(x) \in M(x - \beta)K(x)^{*2}$, $\phi(x) - \beta \in Mx(x-\beta)(x-\gamma)K(x)^{*2}$, and 
$\phi(x) - \gamma \in M(x - \beta)K(x)^{*2}$. It follows that $\phi^2(x) \equiv x(x-\beta)(x-\gamma) \pmod{K(x)^{*2}}$, $\phi^3(x)  \equiv Mx(x-\beta)(x-\gamma) \pmod{K(x)^{*2}}$, and $\phi^4(x) \equiv \phi^2(x) \pmod{K(x)^{*2}}$. Thus \eqref{itreleqn} holds with $r = 4$ and $s = 2$, and we have $g_1 = 0$ and $g_n = 1$ for all $n \geq 2$. This is the only case where $g_n$ is non-constant.

If either $\phi(x)$ or $1/\phi(1/x)$ satisfies (2M), then we take $\alpha_1 = \infty$ and $\alpha_2 = 0$, and we let $C$ be the unique non-zero preimage of $\infty$ with odd ramification index.
Writing
$
\phi(x) = B(x-C)f(x)^2/g(x)^2
$ 
with $B \in K^*$, we obtain $\phi(x) - C \in -CK(x)^{*2}$, and hence $\phi^n(x) \equiv -BC \pmod{K(x)^{*2}}$ for all $n \geq 2$. Thus \eqref{itreleqn} holds with $r = 3$ and $s = 2$, and $g_n = 0$ for all $n \geq 1$.

If either $\phi(x)$ or $1/\phi(1/x)$ satisfies (2N), then we take $\alpha_1 = \infty$ and $\alpha_2 = 0$, and we let $\beta_1$ (resp. $\beta_2$) be the unique preimage of $\infty$ (resp. 0) with ramification index 1. We have 
$$
\phi(x) = M\frac{x(x-\beta_2)f(x)^2}{(x-\beta_1)g(x)^2}, \quad \phi(x) - \beta_1 = \frac{u_1(x)}{(x-\beta_1)g(x)^2}, \quad \phi(x) - \beta_2 = \frac{u_2(x)}{(x-\beta_1)g(x)^2},
$$
where $u_i(x) := Mx(x-\beta_2)f(x)^2 - \beta_i (x-\beta_1)g(x)^2 = b_ih_i(x)^2$ for $i = 1, 2$. Putting $x = 0$ in $u_1(x)$ yields $b_1 \in K^2$. Putting $x = \beta_2$ in $u_i(x)$ yields $\beta_i(\beta_1 - \beta_2)b_i \in K^2$, and putting $x = \beta_1$ in $u_i(x)$ yields $M \beta_1(\beta_1 - \beta_2)b_i \in K^2$.  The latter immediately implies $b_1b_2 \in K^2$, so $b_2 \in K^2$. 
Using $\beta_1(\beta_1 - \beta_2)b_1 \in K^2$ and $M \beta_1(\beta_1 - \beta_2)b_1 \in K^2$ implies $M \in K^2$, and it quickly follows that $\phi^2(x) \equiv x(x-\beta_1)(x-\beta_2) \equiv \phi(x) \pmod{K(x)^2}$. Hence \eqref{itreleqn} holds with $r = 2$ and $s = 1$, and $g_n = 1$ for all $n \geq 1$.

If either $\phi(x)$ or $1/\phi(1/x)$ satisfies (2O), then we take $\alpha_1 = \infty$ and $\alpha_2 = 0$, and we let $C$ be the unique non-zero preimage of $0$ with odd ramification index. Writing
$
\phi(x) = B x(x-C)f(x)^2/g(x)^2
$ 
with $B \in K^*$, we obtain $\phi(x) - C \in -CK(x)^{*2}$, whence $\phi^2(x) \equiv -Cx(x-C) \pmod{K(x)^{*2}}$
and $\phi^3(x) \equiv  \phi(x) \pmod{K(x)^{*2}}$. Thus \eqref{itreleqn} holds with $r = 3$ and $s = 1$, and $g_n = 0$ for all $n \geq 1$. 
\end{proof}

\section{Proofs of remaining results} \label{pfremain}
 
\begin{proof}[Proof of Corollary \ref{powercor}]
Let $K$ be a finitely generated field of characteristic zero, fix $m \geq 2$, let $\phi \in K(x)$ have degree at least two, and assume there exists $a \in \mathbb{P}^1(K)$ such that $O_\phi^+(a) \cap \mathbb{P}^1(K)^m$ is infinite. Let $C_n$ be the curve given by $\phi^n(x) = y^m$; in the notation of the discussion following Conjecture \ref{adml}, we then have $X = \PP^1$, $Y = \PP^1$, $\lambda(x) = x^m$, and $Z_n = C_n$. Then \eqref{infrat} implies that $C_n(K)$ is infinite for all $n \geq 1$, and it follows from Faltings' Theorem that $g_n \leq 1$ for all $n \geq 1$. Hence $\phi$ falls into one of the cases in Theorem \ref{maingenus} and satisfies \eqref{kpart1} and \eqref{kpart2}, and $\phi$ also satisfies \eqref{itreleqn} with $\psi \in K(x)$.
\end{proof}

\begin{proof}[Proof of Theorem \ref{main ML}]
Let $K$ be a finitely generated field of characteristic zero, let $\phi, \lambda \in K(x)$ have degree at least two, and suppose that $\lambda$ is M\"obius-conjugate (over $K$) to a power map.  
From equation \eqref{Mobinv} in the introduction, it suffices to show that the set $\{n \in \N : \phi^n(a) \in \mathbb{P}^1(K)^m\}$ satisfies the conclusions of the theorem for any $\phi \in K(x)$ and $a \in \mathbb{P}^1(K)$.
Take $a \in \PP^1(K)$, and note that the theorem holds when $O_{\phi}(a) \cap \mathbb{P}^1(K)^m$ is finite, by the discussion following the statement of Theorem \ref{main ML}. Suppose for the rest of the proof that $O_\phi^+(a) \cap \mathbb{P}^1(K)^m$ is infinite. We will show that $\{n \in \N : \phi^n(a) \in \mathbb{P}^1(K)^m\}$ is a union of at most three arithmetic progressions with modulus at most $m$ (or 6 if $m = 2$). As in the proof of Corollary \ref{powercor}, we use Faltings' Theorem to derive $g_n \leq 1$ for all $n \geq 1$, and hence Corollary \ref{rhocor} gives that $0$ and $\infty$ are $m$-branch abundant points for $\phi$. 

By Theorem \ref{iterative relationship}, there are $r > s \geq 0$ with $\phi^r(x) \equiv \phi^s(x) \pmod{K(x)^{*m}}$. The sequence
$(\phi^n(x))_{n \geq 0}$ in the group $K(x)^*/K(x)^{*m}$ therefore has this form:
\begin{equation*}
x, \phi(x), \ldots, \phi^s(x), \ldots \phi^{r-1}(x), \phi^s(x), \ldots \phi^{r-1}(x), \phi^s(x), \ldots
\end{equation*}
Observe that if $\psi \in K(x)$, $b \in K$, and $\psi(b) \not\in \{0, \infty\}$, then $\psi(b) \in K^m$ if and only if $\tilde{\psi}(b) \in K^m$ for every $\tilde{\psi} \in K(x)$ with $\psi(x) \equiv \tilde{\psi}(x) \pmod{K(x)^{*m}}$. Let $J = \{s \leq n \leq r-1 : \phi^n(a) \in K^m\}$. If $O_\phi^+(a) \cap \{0, \infty\} = \emptyset$, then it follows that $\{n \in \N: \phi^n(a) \in \PP^1(K)^m\} = I \cup F$ with 
\begin{equation} \label{if}
I = \bigcup_{j \in J} (j + (r-s)\mathbb{N}) \quad \text{and} \quad F = \{0 \leq n < s : \phi^{n}(a) \in K^m\}. 
\end{equation}

Suppose first that $\phi$ is $m$-trivial with respect to $\{0, \infty\}$, and write $\phi(x) = cx^j(\psi_0(x))^m$ as in Proposition \ref{triv}. Let $t$ be the minimal positive integer such that $c^t \in K^{m}$. If $j = 0$ and $t > 1$, then we have $c \not\in K^m$, and hence $\phi(b) \not\in K^m$ for all $b \in \mathbb{P}^1(K)$ with $\phi(b) \not\in \{0,\infty\}$. This forces
$O_\phi^+(a) \cap \mathbb{P}^1(K)^m$ to be finite, a contradiction. If $j = 0$ and $t = 1$, then $c \in K^m$, whence $\phi(b) \in K^m$ for all $b \in \mathbb{P}^1(K)$, implying that $\{n \in \N : \phi^n(a) \in \mathbb{P}^1(K)^{m}\} = \ell + M\N$ with $M=1$ and $\ell = 0$ (if $a \in K^m$) or $\ell = 1$ (otherwise). 

If $j > 0$, then because $0 < j < m$ and the order of any zero or pole of $\psi^m$ is divisible by $m$, we must have $\phi(0) \in \{0, \infty\}$ and $\phi(\infty) \in \{0, \infty\}$. The infinitude of $O_\phi^+(a)$ then implies that $O_\phi^+(a) \cap \{0, \infty\} = \emptyset$. We could now use \eqref{if} to show that $\{n \in \N: \phi^n(a) \in \PP^1(K)^m\}$ is a union of finitely many arithmetic progressions, but we wish to prove the stronger statement that it is a \textit{single} arithmetic progression. Let $\ell$ be the minimal non-negative integer with $\phi^{\ell}(a) \in \mathbb{P}^1(K)^{m}$, and because $\phi^{\ell}(a) \neq \infty$ we may write $\phi^{\ell}(a) = b^m$ for some $b \in K$. From \eqref{explicit} and the fact that $O_\phi^+(a) \cap \{0, \infty\} = \emptyset$, for all $u \geq 1$ we have 
$
\text{$\phi^{\ell + u}(a) \in K^m$ if and only if $c^{1 + j + \cdots + j^{u-1}} \in K^m$.}
$
This in turn is equivalent to
\begin{equation} \label{congruence}
1 + j + \cdots + j^{u-1} \equiv 0 \bmod{t}. 
\end{equation} If $\gcd(t,j) \neq 1$, then \eqref{congruence} cannot hold for any $u \geq 1$, giving the contradiction $O_\phi^+(a) \cap \mathbb{P}^1(K)^{m} = \{\phi^{\ell}(a)\}$. 
Therefore $\gcd(t,j) = 1$. 
If $j = 1$, then \eqref{congruence} holds if and only if $u$ is a multiple of $t$, and we have 
$\{n \in \N : \phi^n(a) \in \mathbb{P}^1(K)^{m}\} = \ell + t\N$. 
If $j > 1$, then note that $j$ is relatively prime to both $j-1$ and $t$, and let $M$ be the order of $j$ in $(\Z/t(j-1)\Z)^*$. Then \eqref{congruence} is equivalent to $j^{u}-1 \equiv 0 \bmod{t(j-1)}$,
which holds if and only if $u$ is a multiple of $M$. Hence $\{n \in \N : \phi^n(a) \in \mathbb{P}^1(K)^{m}\} = \ell + M\N$. 

Suppose that $\phi$ is not $m$-trivial with respect to $\{0, \infty\}$. Then $m \leq 4$ by Theorem \ref{mge5}, and so either $\phi(x)$ or $\phi_1(x) := 1/\phi(1/x)$ is described by one of Theorems \ref{3class}, \ref{2class}, or \ref{main4thm}. Note that if $O_\phi^+(0)$ and $O_\phi^+(\infty)$ are both finite, then $O_{\phi_1}(0)$ and $O_{\phi_1}(\infty)$ are also finite. Hence if either $\phi$ or $\phi_1$ satisfies any of the conclusions of Theorems \ref{3class}, \ref{2class}, or \ref{main4thm} \textit{except for (2M) and (2O) in Theorem \ref{2class}}, we have that $O_\phi^+(0)$ and $O_\phi^+(\infty)$ are both finite, whence the infinitude of $O_\phi^+(a)$ implies that $O_\phi^+(a) \cap \{0, \infty\} = \emptyset$. We may then use \eqref{if} to conclude that $\{n \in \N : \phi^n(a) \in \mathbb{P}^1(K)^{m}\}$ consists of at most $s$ arithmetic progressions of modulus 0 (i.e., singletons) plus at most $k$ infinite arithmetic progressions of modulus dividing $r-s$, where $k = (r-s)/2$ if $r-s$ is even and $k = s - 1$ if $r-s$ is odd. From the proof of Theorem \ref{iterative relationship} we have in each case that $s + k \leq 3$, $r-s \leq m$ for $m \geq 3$, and $r-s \leq 6$ for $m = 2$.  

Finally, suppose that either $\phi$ or $\phi_1$ satisfies (2M) or (2O) of Theorem \ref{2class}. Because $O_\phi^+(a)$ is infinite, each of $0$ and $\infty$ can appear at most once in the sequence $(\phi^n(a))_{n \geq 0}$. In case (2M) we have $\phi^n(x) \equiv \phi^2(x) \pmod K(x)^{*2}$ for all $n \geq 2$, and the infinitude of $O_\phi^+(a) \cap \PP^1(K)^m$ implies that
$\phi^n(a) \in \PP^1(K)^m$ for all $n \geq 2$. It follows that $\{n \in \N : \phi^n(a) \in \mathbb{P}^1(K)^m\}$ is a union of at most two arithmetic progressions. In case (2O), observe that precisely one of $0$, $\infty$ has infinite forward orbit, and hence at most one of them can appear in $O_\phi^+(a)$; without loss of generality, say this is $\infty$. From the last paragraph of the proof of Theorem \ref{iterative relationship}, we may take $r = 3$ and $s = 1$ in \eqref{itreleqn}, and the infinitude of $O_\phi^+(a) \cap \PP^1(K)^m$ implies that one of the following holds: $\phi^n(a) \in \PP^1(K)^m$ for all $n \geq 1$; $\phi^{2n}(a) \in \PP^1(K)^m$ for all $n \geq 1$ and $\phi^{2n-1}(a) \not\in \PP^1(K)^m$ for all $n \geq 1$ except at most one value of $n$ with $\phi^{2n-1}(a) = \infty$; or $\phi^{2n-1}(a) \in \PP^1(K)^m$ for all $n \geq 1$ and $\phi^{2n}(a) \not\in \PP^1(K)^m$ for all $n \geq 1$ except at most one value of $n$ with $\phi^{2n}(a) = \infty$. In each of these cases, $\{n \geq 1 : \phi^n(a) \in \mathbb{P}^1(K)^m\}$ is a union of at most two arithmetic progressions, and thus
$\{n \in \N : \phi^n(a) \in \mathbb{P}^1(K)^m\}$ is a union of at most three arithmetic progressions. 
\end{proof}

Finally, we prove Corollaries \ref{mainpoly} and \ref{quadpoly}. The following lemma aids in the proof of Corollary \ref{mainpoly}. 

\begin{lemma}[\cite{schinzelbook}, Lemma 6 on p. 26] \label{schinzel}
Let $F$ be a field of characteristic $\neq 2$, and suppose that
$$
(Q(x) - q_1)(Q(x) - q_2) = (x-\xi_1)(x-\xi_2)(R(x))^2,
$$
for $Q, R \in F[x]$, $q_1, q_2, \xi_1, \xi_2 \in F$, $q_1 \neq q_2$, $\xi_1 \neq \xi_2$. Then $Q = L \circ T_{\deg Q} \circ M^{-1}$, where
$$
L(x) = \frac{q_1 - q_2}{4}x + \frac{q_1 + q_2}{2} \quad \text{and} \quad M(x) = \frac{\xi_1 - \xi_2}{4}x + \frac{\xi_1 + \xi_2}{2}.
$$
\end{lemma}

\begin{proof}[Proof of Corollary \ref{mainpoly}]
Let $K$ be a finitely generated field of characteristic zero, fix $m \geq 2$, let $\phi \in K[x]$ have degree $d \geq 2$, and assume there exists $a \in \PP^1(K)$ such that $O_\phi^+(a) \cap \mathbb{P}^1(K)^m$ is infinite. As in the proof of Corollary \ref{powercor}, we use Faltings' Theorem to derive $g_n \leq 1$ for all $n \geq 1$; indeed, in this case we may use Siegel's theorem to show $g_n = 0$ for all $n \geq 1$, though we do not need this stronger conclusion. Corollary \ref{rhocor} then gives that $0$ and $\infty$ are $m$-branch abundant points for $\phi$. If $\phi$ is $m$-trivial with respect to $\{0, \infty\}$, then Proposition \ref{triv} and Lemma \ref{anypolys} imply condition (1) of the present corollary. 

Suppose that $\phi$ is not $m$-trivial with respect to $\{0, \infty\}$. Then $m \leq 4$ by Theorem \ref{mge5}, and so either $\phi(x)$ or $\phi_1(x) := 1/\phi(1/x)$ is described by one of Theorems \ref{3class}, \ref{2class}, or \ref{main4thm}. Minor modifications to the proof of Lemma \ref{atmost4} show that $\phi$ has at most one $3$-branch abundant point in $\C$, and at most two $2$-branch abundant points in $\C$. Indeed, in the proof of Lemma \ref{atmost4}, let $V = \{\alpha_1, \ldots, \alpha_k\} \subset \C$ be a set of $p$-branch abundant points for $\phi$. Because $\sum_{z \in \C} (e_\phi(z) - 1) = d-1$, the bound in \eqref{rameqn} implies that $k = 1$ if $p = 3$ and $k \leq 2$ if $p = 2$, as desired. Hence $\phi$ has at most three $2$-branch abundant points in $\PP^1(\C)$, and at most two $3$-branch abundant points in $\PP^1(\C)$, and in both cases one of these is a fixed point whose only preimage is itself. The same statements hold for $\phi_1(x)$. This rules out all cases of Theorems \ref{3class}, \ref{2class}, and \ref{main4thm}, except for (2F) (where $d$ is odd) and (2O) (where $d$ is even) in Theorem \ref{2class}. In both of those cases, let $\{\infty, 0, \beta\}$ be the $2$-branch abundant points for $\phi$, and note that $\infty$ must be the fixed point. It follows that the conditions of Lemma \ref{schinzel} are satisfied with $\{q_1, q_2\} = \{\xi_1, \xi_2\} = \{0, \beta\}$, and thus $L(x) = -(\beta/4)(\epsilon_L x - 2)$ and $M(x) =  -(\beta/4)(\epsilon_M x - 2)$, with $\epsilon_L, \epsilon_M \in \{1, -1\}$. Setting $c = -4/\beta$, we then have 
\begin{equation} \label{chebeqn}
c\phi(x/c) = \epsilon_L (T_d(\epsilon_M(x+2))) - 2.
\end{equation} 
If $d$ is odd, then $T_d$ is an odd function, $T_d$ fixes both $2$ and $-2$, and $\phi(0) \neq 0$ from 2-ramification structure (2F). Putting $x = 0$ in \eqref{chebeqn} gives $\epsilon_L \epsilon_M = -1$. Because $d$ is odd, $T_d$ is an odd function, and so in both cases $\epsilon_L = 1, \epsilon_M = -1$ and $\epsilon_L = -1, \epsilon_M = 1$, we have $c\phi(x/c) = - (T_d(x+2)) - 2$. If $d$ is even, then $T_d$ is an even function, $T_d(\pm 2) = 2$, and $\phi(0) = 0$ from 2-ramification structure (2O). Putting $x = 0$ in \eqref{chebeqn} then gives $\epsilon_L = -1$, implying $c\phi(x/c) = T_d(\pm(x+2)) - 2 = T_d(x+2) - 2$.

It remains only to show that $c \in K$. But $\phi \in K[x]$ by assumption, and $\phi(x) \in (x-\beta)\C[x]^{*2}$ if $d$ is odd and $\phi(x) \in x(x-\beta)\C[x]^{*2}$ if $d$ is even. From Theorem \ref{fieldthm} we have $\beta \in K$, whence $c \in K$. 
\end{proof}

\begin{proof}[Proof of Corollary \ref{quadpoly}]
Let $\phi \in \Q[x]$ have degree 2, and suppose that $\phi$ has a rational orbit containing infinitely many distinct squares. Then Corollary \ref{mainpoly} implies that either (1) $\phi(x) = c(g(x))^2$ for some $g \in \Q(x)$ or (2) $c\phi(x/c) = T_2(x+2) - 2 = x^2 + 4x$ with $c \in \Q^*$. In case (1), we must have $c \in \Q^{*2}$, for otherwise $\phi$ has no rational orbits with infinitely many distinct squares; hence $\phi$ satisfies (a) of the present corollary. In case (2), putting $x = cX$ gives $\phi(X) = (c^2X^2 + 4cX)/c = cX^2 + 4X$, and so $\phi$ satisfies (b) of the present corollary. 

Assume now that $\phi$ satisfies (a) or (b) of the present corollary. For maps satisfying (a), all infinite orbits contain infinitely many distinct squares. 
For maps satisfying (b), a simple calculation shows that $\phi^2(x) = \phi(x)(g_2(x))^2$ for some $g_2 \in \Q[x]$, and it immediately follows that $\phi^2(x) = \phi(x)(g_n(x))^2$ for some $g_n \in \Q[x]$ for each $n \geq 1$. Hence for $a \in \Q$, $O_\phi^+(a)$ contains infinitely many squares if and only if $a$ is the $x$-coordinate of a rational point on the curve $C : y^2 = cx^2 + 4x$. But $C$ has genus zero and the rational point $(0,0)$, and thus $C(\Q)$ is infinite. By Northcott's theorem \cite{northcott}, $\phi$ has only finitely many rational points with finite orbits, and hence there must be a rational orbit of $\phi$ containing infinitely many distinct squares. 
\end{proof}

\section{An example} \label{example}

In this section we present an example of a rational function $\phi \in \Q(x)$ of degree 2 and $a \in \Q$ such that $O_\phi^+(a) \cap \PP^1(\Q)^2$ is infinite and $\{n \in \N : \phi^n(a) \in \PP^1(\Q)^2\}$ cannot be written as union of fewer than three arithmetic progressions. By the proof of Theorem \ref{main ML}, such an example cannot be $2$-trivial with respect to $\{0, \infty\}$, and hence satisfies one of the conditions of Theorem \ref{2class}. Our example satisfies (2O) of this theorem, and hence has the form (5d) in Theorem \ref{maingenus}. In the notation of that theorem, set $f(x) = 1$ and $g(x) = x - s$. The discriminant of $(Bx(x-C) - C(x-s)^2)/(-C)$ is $BC(BC - 4Cs + 4s^2)$, and to make this discriminant zero we take $B = 4s(C-s)/C$. We wish for $s$ to have a rational preimage under $\phi$, and so we find that the discriminant of the numerator of $\phi(x) - s$ is $16s^2(C-s)^3/C$. We wish for this to be a square, and hence we take $(C-s)/C = d^2$, i.e. $s = C(1-d^2)$. Doing so gives
$$
\phi^{-1}(s) = \left\{\frac{C(d+1)^2}{2d+1}, -\frac{C(d-1)^2}{2d-1} \right\}.
$$
Letting $v = C(d+1)^2/(2d+1)$, we have the orbit 
$v \mapsto s \mapsto \infty \mapsto B \mapsto \phi(B) \mapsto \cdots.$
From the last paragraph of the proof of Theorem \ref{iterative relationship}, we have the following equivalences modulo $\Q(x)^{*2}$:
\begin{equation} \label{functions}
\phi(x) \equiv Bx(x-C), \; \phi^2(x) \equiv -Cx(x-C), \; \text{and $\phi^n(x) \equiv \phi^{n-2}(x)$ for all $n \geq 3$}. 
\end{equation}
We wish to have $\{n \in \N: \phi^n(v) \in \PP^1(\Q)^2\} = \{0, 2\} \cup \{2n + 1 : n \geq 0\}$, which cannot be written as a union of fewer than three arithmetic progressions. Assume for a moment that $O_\phi^+(v)$ is infinite, and in particular $\phi^n(v) \in \Q$ for all $n \neq 2$; we will justify this later. From \eqref{functions}, it is sufficient for $v$ and $s$ to be in $\Q^2$ and $\phi(B) \not\in \Q^2$. 
Clearly $v  \equiv C(2d+1) \bmod{\Q^{2}}$ and $s \equiv C(1-d^2) \bmod{\Q^{2}}$, and one calculates $\phi(B) \equiv -C \bmod{\Q^{2}}$. 
If $d \in \Q$ satisfies $C(2d+1) \in \Q^{2}, C(1-d^2) \in \Q^{2},$ and $-C \not\in \Q^{2}$, then the elliptic curve 
$$
E: y^2 = (2x+1)(1-x^2).
$$
has a point in $E(\Q)$ with $x$-coordinate $d$. This curve has conductor 24, and is isomorphic to curve 24a1 in Cremona's table \cite{cremona}. It has rank zero over $\Q$ and $\Q$-torsion subgroup $\Z/2\Z \otimes \Z/4\Z$. Among the seven finite torsion points are five with $x \in \{0, \pm 1, -1/2\}$, and if $d$ takes any of these values then either $v = 0$, $s = 0$, or $s = C$, which are impossible in our setting. The other two points are $(x, y) = (-2, \pm 3)$, and so we must have $d = -2$. With this choice, we may take $C = -3t^2$ for any $t \in \Q \setminus \{0\}$, giving $s = 9t^2$ and $v = t^2$. Hence
\begin{equation*} \label{type76}
\phi(x) = \frac{144t^2x(x + 3t^2)}{(x-9t^2)^2} \qquad t \in \Q \setminus \{0\},
\end{equation*}
is the unique family in $\Q(x)$ satisfying our conditions. It remains to show that the orbit
\begin{equation} \label{orbitt}
O_\phi^+(t^2) = \left\{t^2, 9t^2, \infty, 144t^2, 3 \left( \frac{112t}{5} \right)^2, \left( \frac{151872t}{11869} \right)^2, 3 \left( \frac{17917453568t}{807305405} \right)^2, \ldots \right\}
\end{equation}
is infinite. Observe that if \eqref{orbitt} is infinite for $t = 1$, then the same holds for all $t \in \Q \setminus \{0\}$. When $t = 1$ we obtain the map $\phi_1(x) = \frac{144x(x + 3)}{(x-9)^2}$, which has good reduction at the primes $5$ and $7$. Writing $\Fp$ for the finite field with $p$ elements, one checks that in $\PP^1(\mathbb{F}_5)$, $\phi_1$ has a fixed point and a two-cycle and no other periodic points, while in $\PP^1(\mathbb{F}_7)$, $\phi_1$ has a fixed point and no other periodic points. It follows from \cite[Theorem 2.21]{jhsdynam} that all periodic points of $\phi_1$ in $\Q$ have period one or two. A simple calculation shows that the numerator of $\phi_1^2(x) - x$ is irreducible, and so $\phi_1$ has no two-cycles in $\Q$. Hence the only periodic point for $\phi_1$ in $\Q$ is the fixed point $0$. Thus $O_{\phi_1}(1)$ is infinite, as desired.

\section*{Acknowledgements} We thank Tommy Occhipinti, Bjorn Poonen, Joseph Silverman, Thomas Tucker, and Fedor Pakovich, for helpful comments and references. We extend special thanks to Michael Zieve for extensive and illuminating discussions of the literature surrounding this paper. We are also grateful to the anonymous referees, who furnished numerous suggestions for improving the paper's exposition, including the proof of Proposition \ref{paramprop}.

\bibliographystyle{plain}

\end{document}